\newtheorem{theorem}{Theorem}[section]
\newtheorem{lemma}[theorem]{Lemma}
\newtheorem{proposition}[theorem]{Proposition}
\newtheorem{corollary}[theorem]{Corollary}
\theoremstyle{definition}
\theoremstyle{remark}
\newtheorem{remark}[theorem]{Remark}
\numberwithin{equation}{section}
\begin{document}
\setcounter{page}{1}

\title[ H\"ormander condition for  pseudo-multipliers  ]{ H\"ormander condition for  pseudo-multipliers associated to the harmonic oscillator}

\author[D. Cardona]{Duv\'an Cardona}
\address{
  Duv\'an Cardona:
  \endgraf
  Department of Mathematics
  \endgraf
  Pontificia Universidad Javeriana
  \endgraf
  Bogot\'a
  \endgraf
  Colombia
  \endgraf
  {\it E-mail address} {\rm duvanc306@gmail.com;
cardonaduvan@javeriana.edu.co}
  }

\author[M. Ruzhansky]{Michael Ruzhansky}
\address{
  Michael Ruzhansky:
  \endgraf
  Department of Mathematics
  \endgraf
  Ghent University, Belgium
  \endgraf
  and 
  \endgraf
  School of Mathematics
    \endgraf
    Queen Mary University of London
  \endgraf
  United Kingdom
  \endgraf
  {\it E-mail address} {\rm ruzhansky@gmail.com}
  }

\subjclass[2010]{Primary 81Q10; Secondary 42C10, 35J10, 33C45.}
\date{\today}
\keywords{Pseudo-multiplier; Harmonic oscillator; Hermite functions; H\"ormander condition; Multilinear operator; Fourier multipliers}

\thanks{The second author was supported in parts by the FWO Odysseus Project, by the EPSRC Grant EP/R003025/1 and by the Leverhulme Research Grant
RPG-2017-151. }

\begin{abstract}
In this paper we prove  H\"ormander-Mihlin multiplier theorems for pseudo-multipliers associated to the harmonic oscillator (also called the Hermite operator). Our approach can be extended to also obtain the $L^p$-boundedness results for multilinear pseudo-multipliers. By using the Littlewood-Paley theorem associated to the harmonic oscillator  we also give $L^p$-boundedness and  $L^p$-compactness properties for multipliers. $(L^p,L^q)$-estimates for spectral pseudo-multipliers also are investigated.
\end{abstract} \maketitle

\tableofcontents

\section{Introduction}

In this paper we are interested in the $L^p$-boundedness of pseudo-multipliers associated to the harmonic oscillator (also called Hermite pseudo-multipliers) on $L^p(\mathbb{R}^n)$-spaces.
 The harmonic oscillator is the fundamental operator of quantum mechanics defined by
 \begin{equation}
H\psi:=(-\Delta_x+|x|^2)\psi,
\end{equation}
 with $|x|^2:=\sum_{i=1}^nx_i^2.$
 The harmonic oscillator extends to an unbounded self-adjoint operator on $L^{2}(\mathbb{R}^n) $, and its spectrum consists of the discrete set  $\lambda_\nu:=2|\nu|+n,$ $\nu\in \mathbb{N}_0^n,$ with a set of \emph{real eigenfunctions} $\phi_\nu, $ $\nu\in \mathbb{N}_0^n$ (called Hermite functions) which provide an orthonormal basis of ${L}^2(\mathbb{R}^n).$
 Each Hermite function  $\phi_{\nu}$  on $\mathbb{R}^n$ has the form
\begin{equation}
\phi_\nu:=\Pi_{j=1}^n\phi_{\nu_j},\,\,\, \phi_{\nu_j}(x_j):=(2^{\nu_j}\nu_j!\sqrt{\pi})^{-\frac{1}{2}}H_{\nu_j}(x_j)e^{-\frac{1}{2}x_j^2},
\end{equation}
where $x=(x_1,\ldots,x_n)\in\mathbb{R}^n$, $\nu=(\nu_1,\ldots,\nu_n)\in\mathbb{N}^n_0,$ and $$H_{\nu_j}(x_j):=(-1)^{\nu_j}e^{x_j^2}\frac{d^k}{dx_{j}^k}(e^{-x_j^2})$$ denotes the Hermite polynomial of order $\nu_j.$  By the spectral theorem, for every $f\in\mathscr{D}(\mathbb{R}^n)$ we have
\begin{equation}
Hf(x)=\sum_{\nu\in\mathbb{N}^n_0}\lambda_\nu\widehat{f}(\phi_\nu)\phi_\nu(x),\,\,\,
\end{equation} where $\widehat{f}(\phi_\nu) $ is the Fourier-Hermite transform of $f$ at $\nu$ defined by
\begin{equation} (\mathscr{F}_{H}f)(\nu)\equiv\widehat{f}(\phi_\nu) = ( f,\phi_\nu )_{L^2(\mathbb{R}^n)}:=\int_{\mathbb{R}^n}f(x)\phi_\nu(x)\,dx.\end{equation}
A multiplier associated to the harmonic oscillator (or Hermite multiplier) is a linear operator $T_m$ of the form
\begin{equation}
T_mf(x):=\sum_{\nu\in\mathbb{N}^n_0}m(\nu)\widehat{f}(\phi_\nu)\phi_\nu(x),
\end{equation}
for every function $f\in \mathscr{D}(\mathbb{R}^n). $ The discrete function $m$ is called the symbol of the operator $T_m.$ In particular, if  $m$ is a measurable function, the symbol of the spectral multiplier $m(H)$ (defined by the functional calculus) is given by  $m(\nu):=m(\lambda_\nu),$ so that the spectral multipliers are   natural examples of multipliers associated to the harmonic oscillator. We can refer to e.g. Prugove\u{c}ki \cite{Prugovecki} for the quantum mechanical aspects of the harmonic oscillators.

Now, we present some historical results on the analysis of multipliers. If we denote by $P_{\ell}$  the orthogonal projection to the subspace generated by the set $\{\phi_\nu:|\nu|=\ell\},$  and $m$ is a radial function in the sense that $m(\nu)=m(\nu')$ when $|\nu|=|\nu'|,$ then the multiplier $T_m$ can be written as
\begin{equation}\label{radial}
T_{m}\equiv T_{\mu}f(x):=\sum_{\ell=0}^{\infty}\mu(\ell)(P_{\ell}f)(x),
\end{equation}
where $\mu(\vert \nu\vert)=m(\nu).$ An earlier result by G. Mauceri \cite{Mauceri} (by using methods of Bonami-Clerc \cite{Bonami-Clerc73} and R. Strichartz \cite{Strichartz}) states that the condition
\begin{equation}
\sup_{j}2^{j(k-1)}\sum_{2^{j}\leq N\leq 2^{j+1}}|\Delta^{k}_{N}\mu(N)|<\infty,
\end{equation}
where $0\leq k\leq n+1,$ implies the boundedness of $T_\mu$ for all $1<p<\infty.$ As it was pointed out in \cite{thangavelu0}, the number of discrete derivatives $k$  above can be taken in the range $0\leq k\leq [\frac{3n-2}{6}]+2.$ A remarkable result proved by S. Thangavelu (see \cite{Thangavelu}) states that if $m$ satisfies the discrete Marcienkiewicz condition
\begin{equation}\label{thangavelucondition}
|\Delta_\nu^{\alpha} m(\nu)|\leq C_{\alpha}(1+|\nu|)^{-|\alpha|},\,\,\alpha\in\mathbb{N}^n_0,\,|\alpha|\leq [\frac{n}{2}]+1,
\end{equation}
where $\Delta_\nu$ is the usual difference operator, then the corresponding multiplier $T_m:L^p(\mathbb{R}^n)\rightarrow L^p(\mathbb{R}^n)$ extends to a bounded operator for all $1<p<\infty.$
This result is a discrete analogue of the result proved by Mihlin \cite{Mihlin} for Fourier multipliers of the form
\begin{equation}
T_af(x)=\int_{\mathbb{R}^n}a(\xi)\mathscr{F}{f}(\xi)e^{-2\pi i x\cdot \xi}d\xi,
\end{equation}
where $\mathscr{F}$ is the Fourier transform on $\mathbb{R}^n.$ The Mihlin condition states that if $a$ is a function on $\mathbb{R}^{n}$ satisfying
\begin{equation}\label{mihlincondition}
|\partial_\xi^{\alpha} a(\xi)|\leq C_{\alpha}|\xi|^{-|\alpha|}, \,\xi\neq 0, \,\,\alpha\in\mathbb{N}^n_0,\,|\alpha|\leq [\frac{n}{2}]+1,
\end{equation}
then $T_a:L^p(\mathbb{R}^n)\rightarrow L^p(\mathbb{R}^n)$ extends to a bounded operator for all $1<p<\infty.$ In \cite{Hormander1960} H\"ormander generalised the Mihlin condition \eqref{mihlincondition} to the condition of the form
\begin{equation}\label{hormandercondition}
\Vert a\Vert_{l.u.H^s}:=\sup_{r>0}\Vert a(r\cdot)\eta(\cdot) \Vert_{H^s(\mathbb{R}^n)}=\sup_{r>0}r^{s-\frac{n}{2}}\Vert a(\cdot)\eta(r^{-1}\cdot) \Vert_{H^s(\mathbb{R}^n)}<\infty,\,\, \,\,
\end{equation}
where $\eta\in \mathscr{D}(0,\infty)$ and  $s>\frac{n}{2},$ in order to guarantee the boundedness of a Fourier multiplier $T_a$ on $L^p(\mathbb{R}^n)$ for all $1<p<\infty.$

As it was pointed out in \cite{Blunk}, the situation for multipliers associated to the harmonic oscillator is quite different. In fact, for all $s$ and $\varepsilon>0$  with
\begin{equation}\label{critic}
\frac{n}{2}<s\leq \frac{n}{2}+\frac{1}{6}-\varepsilon
\end{equation}
we can not guarantee the $L^p$-boundedness of Riesz means operators satisfying  \eqref{hormandercondition}, for all   $1<p<\infty.$  However, it was proved in  \cite{Blunk} that there exists $p_0\in [1,2]$ such that a general operator $a(H)$ satisfying so-called Plancherel estimates can be extended to a bounded operator on $L^p(\mathbb{R}^n)$ for all $p_0<p<p_0',$ provided that  $a$ satisfies \eqref{hormandercondition} for $s>\frac{n+1}{2}.$ H\"ormander conditions for Hermite operators were established in \cite{SYY}, see also \cite[Theorem III.9]{COSY}.

An extension of Fourier multipliers is given by so-called pseudo-multipliers (see \cite{BagchiThangavelu}). If $m$ is a bounded function on $\mathbb{R}^n\times \mathbb{N}_0^n$ the associated pseudo-multiplier  $T_m$ is the operator defined by
\begin{equation}\label{EQ:pm}
T_{m}f(x):= \sum_{\nu\in\mathbb{N}^n_0}m(x,\nu)\widehat{f}(\phi_\nu)\phi_\nu(x)
\end{equation}
for every function $f\in \mathscr{D}(\mathbb{R}^n). $ We refer to the function $m$ as the \emph{symbol} of the operator $T_m.$ If $m(\nu)=\mu(|\nu|)$ (as in the Maceuri result mentioned previously), it was proved among other things  by  S. Bagchi and S. Thangavelu  \cite{BagchiThangavelu} (see also J. Epperson \cite{Epperson}), that for $n\geq 2,$ the condition
\begin{equation}
\sup_{x\in\mathbb{R}^n}|\Delta^j \mu(x,k)|\leq C_{j}(2k+n)^{-j},\,\,0\leq j\leq n+1,
\end{equation}
implies that the pseudo-multiplier $T_\mu$ is of weak type (1,1) and bounded on $L^p(\mathbb{R}^n)$ provided that $T_{\mu}$ is bounded on $L^2(\mathbb{R}^n).$  The reference \cite{BagchiThangavelu} provides several conditions for the boundedness of pseudo-multipliers including continuity in $L^p$-spaces with weights.\\

From the point of view of the theory of pseudo-differential operators, pseudo-multipliers would be the special case of the symbolic calculus developed in the works of the second author and N. Tokmagambetov \cite{ProfRuzM:TokN:20016,ProfRuzM:TokN:20017}.

The main result of this paper is the H\"ormander type condition for pseudo-multiplier operators \eqref{EQ:pm} and for their multilinear versions. In order to classify the order of regularity in our H\"ormander conditions, we use the following norms,
\begin{equation}\label{primeranorma}
\Vert m \Vert_{l.u., H^s}:=\sup_{k>0,\,y\in\mathbb{R}^n} \,2^{k(s-\frac{n}{2})}\Vert  \langle x\rangle^{s}\mathscr{F}[m(y,\cdot)\psi(2^{-k} |\cdot|)](x)\Vert_{L^2({\mathbb{R}}^n_x)}<\infty,
\end{equation}
\begin{equation}\label{segundanorma}
\Vert m\Vert_{l.u., \mathcal{H}^s}:=\sup_{k>0}\sup_{y\in\mathbb{R}^n} \,{2}^{k(s-\frac{n}{2})}\Vert \langle x \rangle^s \mathscr{F}^{-1}_H[m(y,\cdot)\psi(2^{-k}|\cdot|)](x)\Vert_{L^2(\mathbb{R}^n_x)}<\infty,
\end{equation}
defined by the Fourier transform and the Fourier-Hermite transform, respectively, with $\langle x\rangle:=(1+|x|^2)^{\frac{1}{2}}$. In \eqref{primeranorma} we consider functions $m$ on $\mathbb{R}^n\times \mathbb{R}^n,$ but to these functions we associate a pseudo-multiplier with symbol $\{m(x,\nu)\}_{x\in\mathbb{R}^n,\nu\in\mathbb{N}_0^n}.$ Our main results for pseudo-multipliers can be summarised in the following two theorems.

\begin{theorem}\label{maintheorem}
Let us assume that $2\leq p<\infty.$ If $T_m$ is a pseudo-multiplier with symbol $m$ satisfying  \eqref{primeranorma}, then  under one of the following conditions,
\begin{itemize}
\item  $n\geq 2,$  $2\leq p<\frac{2(n+3)}{n+1},$ and $s>s_{n,p}:=\frac{3n}{2}+{\frac{n-1}{2}(\frac{1}{2}-\frac{1}{p})},$
\item $n\geq 2,$ $p=\frac{2(n+3)}{n+1},$ and $s>s_{n,p}:=\frac{3n}{2}+\frac{n-1}{2(n+3)},$
\item $n\geq 2,$ $\frac{2(n+3)}{n+1}<p\leq \frac{2n}{n-2},$ and  $s>s_{n,p}:=\frac{3n}{2}{-\frac{1}{6}+\frac{2n}{3}(\frac{1}{2}-\frac{1}{p})},$
\item $n\geq 2,$ $\frac{2n}{n-2}\leq p<\infty,$ and  $s>s_{n,p}:=\frac{3n-1}{2}{+n(\frac{1}{2}-\frac{1}{p})},$
\item $n=1,$ $2\leq p<4,$ $s>s_{1,p}:=\frac{3}{2},$
\item $n=1,$ $p=4,$ $s>s_{1,4}:=2,$
\item $n=1,$ $4<p<\infty,$  $s>s_{1,p}:=\frac{4}{3}{+\frac{2}{3}(\frac{1}{2}-\frac{1}{p})},$
\end{itemize}
the operator $T_m$ extends to a bounded operator on $L^p(\mathbb{R}^n).$ For $1<p\leq 2,$ under one of the following  conditions
\begin{itemize}
\item  $n\geq 2,$  $\frac{2(n+3)}{n+5}\leq p\leq 2,$ and $s>s_{n,p}:=\frac{3n}{2}+{\frac{n-1}{2}(\frac{1}{2}-\frac{1}{p})},$
\item $n\geq 2,$ $\frac{2n}{n+2}\leq p\leq \frac{2(n+3)}{n+5},$ and  $s>s_{n,p}:=\frac{3n}{2}{-\frac{1}{6}+\frac{2n}{3}(\frac{1}{2}-\frac{1}{p})},$
\item $n\geq 2,$ $1< p\leq \frac{2n}{n+2},$ and  $s>s_{n,p}:=\frac{3n-1}{2}{+n(\frac{1}{2}-\frac{1}{p})},$
\item $n=1,$ $\frac{4}{3}\leq p<2,$ $s>s_{1,p}:=\frac{3}{2},$
\item $n=1,$ $1<p<\frac{4}{3},$  $s>s_{1,p}:=\frac{4}{3}{+\frac{2}{3}(\frac{1}{2}-\frac{1}{p})},$
\end{itemize} the operator $T_m$ extends to a bounded operator on $L^p(\mathbb{R}^n)$. However, in general:
\begin{itemize} \item for every $\frac{4}{3}<p<4$ and every $n,$ the condition  $s>\frac{3n}{2}$ implies the $L^p$-boundedness of $T_m.$  \end{itemize} If the symbol $m$ of the pseudo-multiplier $T_m$ satisfies the H\"ormander condition \eqref{segundanorma}, in order to guarantee the $L^p$-boundedness of $T_m,$  in every case above we can take $s>s_{n,p}-\frac{1}{12}.$  Moreover, the condition $s>\frac{3n}{2}-\frac{1}{12}$ implies the $L^p$-boundedness of $T_m$ for all $\frac{4}{3}<p<4.$
\end{theorem}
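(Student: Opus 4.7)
The plan is to carry out a Littlewood--Paley dyadic decomposition in the spectral variable, then control each dyadic block using the sharp $L^p$-estimates of Karadzhov / Koch--Tataru / Thangavelu for the Hermite spectral projectors $P_\ell$. The three $p$-regimes appearing in the statement, separated by $p=\frac{2(n+3)}{n+1}$ and $p=\frac{2n}{n-2}$, are precisely those of the sharp cluster bounds, and the dual ranges for $1<p\leq 2$ will be recovered by duality.

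Fix $\psi\in C_c^\infty((1/2,2))$ with $\sum_{k\geq 0}\psi(2^{-k}|\xi|)=1$ on $|\xi|\geq 1$, set $m_k(x,\nu):=m(x,\nu)\psi(2^{-k}|\nu|)$, and write $T_m=T_{m_0}+\sum_{k\geq 1}T_{m_k}$. The zeroth block acts on a finite-dimensional span of $\phi_\nu$'s and is trivially $L^p$-bounded, so the theorem reduces to the geometric block bound
\begin{equation*}
\|T_{m_k}\|_{L^p(\mathbb{R}^n)\to L^p(\mathbb{R}^n)}\leq C\,2^{-k\varepsilon}\,\|m\|_{l.u.,H^s},\qquad \varepsilon=\varepsilon(s,n,p)>0,
\end{equation*}
after summation in $k$. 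The hypothesis \eqref{primeranorma} is, via Plancherel, equivalent to the Sobolev bound $\sup_y\|m(y,\cdot)\psi(2^{-k}|\cdot|)\|_{H^s(\mathbb{R}^n)}\leq C\,2^{-k(s-n/2)}\,\|m\|_{l.u.,H^s}$, while the sharp Karadzhov cluster estimate reads
\begin{equation*}
\|P_\ell g\|_{L^p(\mathbb{R}^n)}\leq C\,\lambda_\ell^{\delta(n,p)}\,\|g\|_{L^2(\mathbb{R}^n)},
\end{equation*}
with $\delta(n,p)$ taking the three characteristic forms in the three ranges $[2,\frac{2(n+3)}{n+1}]$, $[\frac{2(n+3)}{n+1},\frac{2n}{n-2}]$, and $[\frac{2n}{n-2},\infty)$, matching precisely the three formulas for $s_{n,p}$.

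To control the block $T_{m_k}$, I would expand the symbol in the spectral variable by Fourier inversion extended to $\mathbb{R}^n$, writing $m_k(x,\nu)=\int \widetilde m_k(x,\eta)\,e^{2\pi i\nu\cdot\eta}\,d\eta$ with $\widetilde m_k(x,\eta):=\mathscr{F}[m(x,\cdot)\psi(2^{-k}|\cdot|)](\eta)$, so that
\begin{equation*}
T_{m_k}f(x)=\int_{\mathbb{R}^n}\widetilde m_k(x,\eta)\,M^{(k)}_\eta f(x)\,d\eta,
\end{equation*}
with $M^{(k)}_\eta$ the Hermite multiplier of symbol $\nu\mapsto\psi(2^{-k}|\nu|)\,e^{2\pi i\nu\cdot\eta}$. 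A pointwise Cauchy--Schwarz in $\eta$ with weight $\langle\eta\rangle^{\pm s}$ controls the first factor through \eqref{primeranorma}, while the second factor, after Minkowski's integral inequality (valid for $p\geq 2$), reduces to estimating $\|M^{(k)}_\eta f\|_{L^p}$ weighted by $\langle\eta\rangle^{-2s}$; spectral localization of $M^{(k)}_\eta f$ to the dyadic cluster $\{|\nu|\sim 2^k\}$ together with the Karadzhov bound delivers the dyadic gain $2^{k\delta(n,p)}$. Combining these yields $\|T_{m_k}\|_{L^p\to L^p}\lesssim 2^{k(s_{n,p}-s)}\,\|m\|_{l.u.,H^s}$, so that $s>s_{n,p}$ produces geometric summability. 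The ranges $1<p\leq 2$ follow by duality --- the formal adjoint of $T_m$ differs from a pseudo-multiplier with symbol $\overline{m}$ only by commutator terms absorbed in the Sobolev index --- and the universal regime $\tfrac{4}{3}<p<4$ follows by interpolation with the $L^2$-endpoint.

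Finally, replacing \eqref{primeranorma} by the Fourier--Hermite norm \eqref{segundanorma} substitutes the continuous Plancherel identity for its Hermite counterpart, which is perfectly adapted to the spectrum of $H$; this absorbs the universal Koch--Tataru refinement $\lambda_\ell^{-1/12}$ available in the middle cluster regime, explaining the improvement $s>s_{n,p}-\tfrac{1}{12}$. The main obstacle I anticipate is the middle regime $[\frac{2(n+3)}{n+1},\frac{2n}{n-2}]$, where the $\tfrac{1}{12}$ gain is fragile and easily lost through successive applications of Cauchy--Schwarz, Minkowski, and duality; the Hermite--Plancherel approach underlying \eqref{segundanorma} is the natural device that preserves it.
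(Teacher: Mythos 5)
Your overall skeleton (dyadic decomposition of the symbol in the spectral variable, a geometric bound for each block, summation in $k$) coincides with the paper's, but the quantitative core of your argument is missing and, as sketched, would not produce the stated thresholds. The paper (Propositions \ref{proposition1} and \ref{PFcondition}) never uses the cluster projection bounds $\Vert P_\ell\Vert_{L^2\to L^p}$: it bounds $|(T_{m(k)}f,\overline{g})|$ term by term over the $\sim 2^{kn}$ lattice points $\nu$ in the shell, using the product of \emph{individual} eigenfunction norms $\Vert\phi_\nu\Vert_{L^p}\Vert\phi_\nu\Vert_{L^{p'}}\lesssim|\nu|^{\gamma_p}$ from Lemma \ref{Lemma1}. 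The baseline $\tfrac{3n}{2}$ in $s_{n,p}=\tfrac{3n}{2}+\gamma_p$ is exactly the product of the shell cardinality $2^{kn}$ with the factor $2^{kn/2}$ released by the normalization $2^{-k(s-n/2)}$ in \eqref{primeranorma}; it has no counterpart in your scheme, and the Karadzhov exponent $\delta(n,p)$ for $\Vert P_\ell g\Vert_{L^p}\lesssim\lambda_\ell^{\delta(n,p)}\Vert g\Vert_{L^2}$ is numerically different from $\gamma_p$ (the projector bound exploits orthogonality inside the cluster and is in fact much smaller than the sum of the individual products). So the assertion that your block estimate "matches precisely the three formulas for $s_{n,p}$" is not derived, and if you did push the cluster-bound route through you would land on different (likely sharper) exponents --- which means you would be proving a different theorem, not this one. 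You need to either carry out the exponent bookkeeping explicitly or switch to the paper's cruder lattice-point count.

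Two further steps would fail as written. First, the duality reduction for $1<p\leq 2$: the formal adjoint of $T_m f=\sum_\nu m(x,\nu)\widehat{f}(\phi_\nu)\phi_\nu(x)$ is $g\mapsto\sum_\nu\phi_\nu\int\overline{m(y,\nu)}\phi_\nu(y)g(y)\,dy$, which is \emph{not} a pseudo-multiplier, and "commutator terms absorbed in the Sobolev index" is not an argument --- controlling such terms is precisely the hard part. The paper avoids duality altogether: its direct estimate is symmetric under $p\leftrightarrow p'$ because $\Vert\phi_\nu\Vert_{L^p}\Vert\phi_\nu\Vert_{L^{p'}}$ is, so the same proof covers $1<p\leq 2$ with $\gamma_p=\gamma_{p'}$. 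Second, your explanation of the $-\tfrac{1}{12}$ gain under \eqref{segundanorma} is wrong in mechanism: it does not come from a cluster refinement in the middle regime, but from the elementary sup-norm decay $\Vert\phi_\nu\Vert_{L^\infty(\mathbb{R}^n)}\lesssim|\nu|^{-1/12}$ (Remark \ref{infty}), which enters because the inverse Fourier--Hermite transform produces the kernel factor $\Vert\phi_\nu(\cdot)\langle\cdot\rangle^{-s}\Vert_{L^2}\leq\Vert\phi_\nu\Vert_{L^\infty}\Vert\langle\cdot\rangle^{-s}\Vert_{L^2}$ where the Euclidean transform only produces $\Vert\langle\cdot\rangle^{-s}\Vert_{L^2}$; this gain is uniform over all the ranges of $p$, not confined to $[\tfrac{2(n+3)}{n+1},\tfrac{2n}{n-2}]$.
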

Now we discuss some important facts concerning the results of this paper.
\begin{itemize}
\item It is usual to assume the $L^2$-boundedness of a pseudo-multiplier $T_m$ in order to provide its $L^p$-boundedness (see \cite{BagchiThangavelu} and \cite{Epperson}). Indeed, as it was pointed out in \cite{BagchiThangavelu}, the problem of finding satisfactory conditions for the $L^2$-boundedness of pseudo-multipliers remains open. However, in  our main theorem we solve such problem by considering symbols $m(x,\nu)$ satisfying the H\"ormander condition \eqref{primeranorma} of order $s>\frac{3n}{2},$ uniformly in $y\in \mathbb{R}^n,$ or the  condition \eqref{segundanorma} for $s>\frac{3n}{2}-\frac{1}{12},$ uniformly in $y\in\mathbb{R}^n.$
\item A function $m$ belongs to the Kohn-Nirenberg class $S^{0,\rho}(\mathbb{R}^n\times \mathbb{R}^n)$ if it satisfies the symbol inequalities
\begin{equation}
|\partial_{\xi}^{\alpha}m(x,\xi)|\leq C_{\alpha}(1+|\xi|)^{-|\alpha|},\,\,\,|\alpha|\leq \rho,
\end{equation}
uniformly in $x\in\mathbb{R}^n.$ Symbols in the class $S^{0,2n+1}$ are functions satisfying \eqref{primeranorma} and they provide bounded pseudo-multipliers in $L^p$-spaces for all $1<p<\infty$. In particular symbols in the class $S^{0,\,[\frac{3n}{2}]+1}$ provide  bounded pseudo-multipliers  in $L^2(\mathbb{R}^n).$ These facts will  be proved in Proposition \ref{KNcondition'}. Moreover, (see Corollary \ref{DiscTangCond}) if we assume the condition,
\begin{equation}\label{DiscreteconditionIntro}
|\Delta_\nu^{\alpha} m(x,\nu)|\leq C_{\alpha}(1+|\nu|)^{-|\alpha|},\,\,\alpha\in\mathbb{N}^n_0,\,|\alpha|\leq \rho,
\end{equation}for $\rho=[3n/2]+1,$ then $T_m$ extends to a bounded operator on $L^2(\mathbb{R}^n)$, and for $\rho=2n+1$ we have the $L^p(\mathbb{R}^n)$-boundedness of $T_m$ for all $1<p<\infty.$ 
\item For $n=1$ and by assuming the $L^2$-boundedness of a pseudo-multiplier $T_m$, it was proved by Epperson \cite{Epperson} that \eqref{DiscreteconditionIntro} is a sufficient condition for the $L^p$-boundedness of $T_m$ provided that $\rho=5.$ In constrast, we only require derivatives up to order $\rho=3.$ For spectral pseudo-multipliers $m(x,H)$ and $n\geq 2$,  and newly by assuming the $L^2$-boundedness, Bagchi and Thangavelu proved the $L^p$-boundedness provided that \eqref{DiscreteconditionIntro} holds true for $\rho=n+1.$ Although we impose for $n\geq 2,$ $\rho=[3n/2]+1,$  we do not assume the $L^2$-boundedness for these operators. We also include general pseudo-multipliers  and particularly spectral pseudo-multipliers.
\item The $(L^p,L^q)-$boundedness of pseudo-multipliers will be investigated  in Theorem \ref{LpLp'} and Theorem \ref{Teoremita:Lp:Lq;general}.
\item By using the Littlewood-Paley theorem associated to the harmonic oscillator, we give a $L^p$-multiplier theorem and a $L^p$-compactness theorem for multipliers (see Theorem \ref{LpTLpComp}), the sufficient condition imposed is however, different from the H\"ormander condition. The $L^2$-compactness of multipliers will be characterised in Theorem \ref{CompL2}.
\end{itemize}
In this paper we introduce the notion of multilinear pseudo-multipliers,  which, in analogy with the definition of multilinear Fourier multipliers, are operators of the form
\begin{equation}\label{pseudo-multiplier}
T_{m}(f_1,\cdots, f_\varkappa):=\sum_{\nu:=(\nu_1,\cdots, \nu_\varkappa)\in\mathbb{N}_{0}^{n\varkappa}}m(x,\nu)\widehat{f}_1(\phi_{\nu_1})\cdots \widehat{f}_\varkappa(\phi_{\nu_\varkappa}) \phi_{\nu_1}\cdots \phi_{\nu_\varkappa},\,\,x\in\mathbb{R}^{n},
\end{equation}
for all $f_{1},f_{2},\cdots ,f_\varkappa\in\mathscr{D}(\mathbb{R}^n).$ In this setting, by imposing discrete multilinear H\"ormander conditions on the symbol $m,$ of the type
\begin{equation}\label{multilinearhormander11}
\Vert m \Vert_{l.u.,\mathcal{H}^s}:=\sup_{k>0,\,x\in\mathbb{R}^n} \,2^{k(s-\frac{n\varkappa}{2})}\Vert  \langle z \rangle^{s} \mathscr{F}_{H}^{-1}[m(x,\cdot)\psi(2^{-k}|\cdot|)](z)\Vert_{L^2({\mathbb{R}}^{n\varkappa}_z)}<\infty,
\end{equation}
\begin{equation}\label{multilinearhormander22}
\Vert m \Vert_{l.u.,{H}^s}:=\sup_{k>0,\,x\in\mathbb{R}^n} \,2^{k(s-\frac{n\varkappa}{2})}\Vert  \langle z \rangle^{s} \mathscr{F}[m(x,\cdot)\psi(2^{-k}|\cdot|)](z)\Vert_{L^2({\mathbb{R}}^{n\varkappa}_z)}<\infty,
\end{equation}
we want to guarantee the boundedness of $T_{m} .$ Thus, we establish the  following multilinear result.
\begin{theorem}
Let us consider a multilinear pseudo-multiplier $T_m$ defined on $\mathscr{D}(\mathbb{R}^n)^{\varkappa}$ with symbol satisfying \eqref{multilinearhormander11} or \eqref{multilinearhormander22} for  $$s>s_{n,\varkappa,p}:=\max\{\frac{3n\varkappa}{2}+{(\varkappa-1)}\gamma_\infty,\frac{3n\varkappa}{2}+\frac{(\varkappa-1)n}{4}\},$$ with $\gamma_\infty,$ defined as in \eqref{gammap}. Then the operator
\begin{equation}\label{multi111}
T_m:L^{p_1}\times L^{p_2}\times\cdots \times L^{p_{\varkappa-1}}\times L^{p_\varkappa}\rightarrow L^{p}(\mathbb{R}^n)
\end{equation}
extends to a bounded multilinear operator provided that $1\leq p_{j}\leq \infty,$ $1\leq p\leq 2,$ and $\frac{1}{p}=\frac{1}{p_1}+\cdots +\frac{1}{p_\varkappa}.$ If $m$ satisfies the condition \eqref{multilinearhormander11} or \eqref{multilinearhormander22} for $$s>\max\{\frac{3n\varkappa}{2}+\frac{(\varkappa-1)n}{4},\frac{3n\varkappa}{2}+\frac{(n-1)(\varkappa-1)}{2}+\gamma_p\},$$ with $\gamma_p$ defined as in \eqref{gammap}, then \eqref{multi111} holds true for all $2\leq p\leq \infty$ and $\frac{1}{p}=\frac{1}{p_1}+\cdots +\frac{1}{p_\varkappa}.$
\end{theorem}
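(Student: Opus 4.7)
The strategy is to reduce the multilinear statement to the linear Theorem~\ref{maintheorem} via a Coifman--Meyer type decomposition adapted to the harmonic oscillator. First I would decompose the symbol dyadically in the joint frequency variable $\nu=(\nu_1,\dots,\nu_\varkappa)\in \mathbb{N}_0^{n\varkappa}$, writing $m=\sum_{k\geq 0}m_k$ with $m_k(x,\nu):=m(x,\nu)\psi(2^{-k}|\nu|)$, and handle each block $T_{m_k}$ separately. For each fixed $k$ and $x$, I would split the dependence on the individual indices $\nu_1,\ldots,\nu_\varkappa$ by expanding $\nu\mapsto m_k(x,\nu)$ in an orthonormal system that factorises across the $\varkappa$ components: Hermite functions on $\mathbb{R}^{n\varkappa}$ when the hypothesis is \eqref{multilinearhormander11}, or exponential characters on a cube of side $\sim 2^k$ containing the support of $m_k(x,\cdot)$ when the hypothesis is \eqref{multilinearhormander22}. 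Either way this produces
$$m_k(x,\nu)=\sum_{\mu}a_k(x,\mu)\prod_{j=1}^\varkappa \kappa_{k,\mu_j}(\nu_j),$$
and Plancherel identifies the weighted $\ell^2$-norm of the coefficients $a_k(x,\cdot)$ against $\langle\mu\rangle^{2s}$ with the weighted $L^2$-norm $\|\langle z\rangle^s\mathscr{F}_H^{-1}[m_k(x,\cdot)](z)\|_{L^2_z}$ or its analogue with $\mathscr{F}$, which by the H\"ormander hypothesis is bounded by $C\cdot 2^{-k(s-n\varkappa/2)}$ uniformly in $x$.

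This turns $T_{m_k}$ into a superposition of tensor products of linear dyadic pseudo-multipliers $T_{\kappa_{k,\mu_j}}$ acting separately on each argument, so that after multilinear H\"older with $1/p=\sum_j 1/p_j$ it suffices to bound each $\|T_{\kappa_{k,\mu_j}}f_j\|_{L^{p_j}}$ individually. I would select one distinguished index $j_0$ so that Theorem~\ref{maintheorem} applied to that factor extracts the sharp linear gain at the target exponent; the remaining $\varkappa-1$ factors are estimated by the spectral projector bound on a single Hermite block of level $2^k$, which contributes either the Sogge-type loss $\gamma_p$ or the trivial dimensional loss $n/4$ coming from the $L^2\to L^\infty$ norm of the projector. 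These two alternatives are precisely the reason for the maximum defining the threshold $s_{n,\varkappa,p}$, and they explain the bifurcation between the $p\leq 2$ and $p\geq 2$ statements: in the first range one factor is estimated at $L^{p_{j_0}}\to L^p$ with $p\leq 2$ while the auxiliaries use $L^\infty$-type bounds (hence $\gamma_\infty$), and in the second range the $p\geq 2$ linear bound combines with projector estimates carrying $\gamma_p$.

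Finally, Cauchy--Schwarz in $\mu$ against the weight $\langle\mu\rangle^{2s}$ absorbs the coefficients $a_k(x,\mu)$ into the Sobolev norm controlled by the H\"ormander hypothesis, and combined with the linear and projector estimates produces a factor $2^{-\varepsilon k}$ for each block provided $s$ exceeds $s_{n,\varkappa,p}$; the $k$-summation is then a convergent geometric series yielding the desired multilinear estimate. I expect the principal obstacle to be bookkeeping: tracking the exact Sobolev loss contributed by each of the $\varkappa-1$ auxiliary factors, choosing the optimal assignment of the ``leading'' argument among $\{f_1,\ldots,f_\varkappa\}$, verifying that the resulting exponent matches the two expressions inside the maximum for both ranges of $p$, and treating the endpoints $p_j\in\{1,\infty\}$ (where Theorem~\ref{maintheorem} does not directly apply) by complex interpolation between adjacent open cases.
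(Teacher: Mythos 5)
Your plan is a Coifman--Meyer style separation of variables, which is genuinely different from what the paper does, but as written it has two concrete gaps. First, the reduction to linear operators is not justified in the Hermite setting. After expanding $m_k(x,\cdot)$ in exponentials you obtain factors of the form $\sum_{\nu_j}e^{2\pi i\nu_j\cdot z_j}\widehat{f_j}(\phi_{\nu_j})\phi_{\nu_j}(x)$; in the Euclidean Fourier case the analogous operator is translation by $z_j$, an $L^{p_j}$-isometry, which is what makes the Coifman--Meyer scheme close. Here the modulation $e^{2\pi i\nu\cdot z}$ is a nontrivial Hermite multiplier whose $\mathscr{B}(L^{p_j})$-norm is \emph{not} uniformly bounded in $z$ (its difference-derivatives grow like $|z|^{|\alpha|}$, so any multiplier theorem yields bounds growing polynomially in $\langle z\rangle$), and you never verify that the weight $\langle z\rangle^{s}$ in the H\"ormander hypothesis absorbs this growth, nor that the resulting exponent matches the stated thresholds. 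If you instead use the Hermite expansion of $m_k(x,\cdot)$ as your factorising system, there is no separation at all: the ``coefficients'' are just the values $m_k(x,\nu)$ themselves. Second, the first half of the statement allows $p_j\in\{1,\infty\}$, and your fallback of reaching these by ``complex interpolation between adjacent open cases'' cannot work: interpolation moves you between exponents you already control and never out to a closed endpoint, and Theorem~\ref{maintheorem} gives you nothing at $p_j=1,\infty$. The auxiliary losses you invoke are also misattributed: in the ranges at issue the exponents $\gamma_\infty$ and $\frac{(\varkappa-1)n}{4}$ arise from products of $L^1$-, $L^2$- and $L^\infty$-norms of individual Hermite functions (Lemmas~\ref{Lemma10} and \ref{Lemma1}, Remark~\ref{infty}), not from $L^2\to L^\infty$ norms of spectral projectors on a dyadic block.

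The paper's route avoids both problems. It first proves, by a direct duality argument (dyadic decomposition of the symbol, crude summation over the block $2^k\le|\nu|<2^{k+1}$, H\"older against $\prod_j\Vert f_j\Vert\,\Vert\phi_{\nu_j}\Vert$, and the Koch--Tataru/Thangavelu bounds), the anchor estimates
$L^{1}\times(L^{\infty})^{\varkappa-1}\to L^{1}$, $L^{2}\times(L^{\infty})^{\varkappa-1}\to L^{2}$ and $L^{p}\times(L^{\infty})^{\varkappa-1}\to L^{p}$ for $2<p\le\infty$ (Proposition~\ref{firstlemmamultilinear}); it then freezes all arguments but one and iterates the linear Riesz--Thorin theorem, with an induction on $\varkappa$, to fill in all tuples $(p_1,\dots,p_\varkappa)$ with $\frac1p=\sum_j\frac1{p_j}$ (Proposition~\ref{multilineartool}). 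Working the endpoints $L^1$ and $L^\infty$ into the anchor estimates from the start is exactly what makes the interpolation step legitimate. If you want to salvage your approach, you would need to (i) prove a quantitative $L^{p}$-bound for the modulated Hermite multiplier with explicit polynomial dependence on $\langle z\rangle$ and check it against $s>s_{n,\varkappa,p}$, and (ii) establish the $p_j\in\{1,\infty\}$ cases by a separate direct argument rather than by interpolation.
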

Let us note that $\frac{3n\varkappa}{2}+\frac{(\varkappa-1)n}{4} $ and $\frac{3n\varkappa}{2}+\frac{(n-1)(\varkappa-1)}{2}+\gamma_p$ cannot be compared immediately because the sign of $\gamma_p$ depends on the  values of $p.$

This multilinear theorem for pseudo-multipliers  is analogous to ones obtained in the framework of multilinear multipliers. Although the literature for the multilinear analysis is extensive, we refer the reader to \cite{Graf3,Graf2,Graf} and to the seminal  work of R. Coifman and
Y. Meyer  where the multilinear harmonic analysis was originated.

This paper is organised as follows. In Section \ref{mho} we present the proof of our main theorem. In Section \ref{compactnesssection} we discuss the compactness properties. Finally, in Section \ref{multilinear} we prove the result mentioned above for multilinear pseudo-multipliers.

\section{Boundedness of pseudo-multipliers associated to the harmonic oscillator, H\"ormander condition }\label{mho}

Throughout this paper the function  $\psi\in \mathscr{D}(0,\infty)$ will be supported in $[\frac{1}{2},4]$ with $\psi\equiv 1$ on $[1,2].$ In this section we will use functions in a (locally uniformly)  Sobolev space of order $s>0,$ which consists of all functions $m $ on $\mathbb{R}^n\times \mathbb{R}^n$ satisfying
\begin{equation}
\Vert m \Vert_{l.u., H^s}:=\sup_{k>0,\,y\in\mathbb{R}^n} \,2^{k(s-\frac{n}{2})}\Vert  \langle x\rangle^{s}\mathscr{F}[m(y,\cdot)\psi(2^{-k} |\cdot|)](x)\Vert_{L^2({\mathbb{R}}^n_x)}<\infty,
\end{equation}
 in order to establish the $L^p$-boundedness of  Hermite pseudo-multipliers. We have denoted by $\mathscr{F}$  the Fourier transform on $\mathbb{R}^n$ defined by
\begin{equation}
(\mathscr{F}f)(\xi)=\int_{\mathbb{R}^n}e^{-2\pi i x\cdot \xi }f(x)dx.
\end{equation}
Another option that we can use in order to define (local) discrete Sobolev spaces come from  the norm
\begin{equation}
\Vert m\Vert_{l.u., \mathcal{H}^s}:=\sup_{k>0}\sup_{y\in\mathbb{R}^n} \,2^{k(s-\frac{n}{2})}\Vert \langle x \rangle^s \mathscr{F}^{-1}_H[m(y,\cdot)\psi(2^{-k}|\cdot|)](x)\Vert_{L^2(\mathbb{R}^n_x)}<\infty.
\end{equation}
We recall that the Fourier-Hermite transform $\mathscr{F}_{H}$ is defined for every $f\in\mathscr{D}(\mathbb{R}^n)$ by the formula
\begin{equation}\label{FourierHermitetransform}
(\mathscr{F}_Hf)(\nu):=\int_{\mathbb{R}^n}f(x)\psi_\nu(x)dx,\,\,\nu\in\mathbb{N}^n_0.
\end{equation}
If we denote the inverse Fourier-Hermite transform by $\mathscr{F}^{-1}_{H}$ which is defined by
\begin{equation}\label{inverse}
(\mathscr{F}^{-1}_{H}u)(x):=\sum_{\nu\in\mathbb{N}_0^n}u(\nu)\phi_\nu(x),
\end{equation}
where $u$ is a function with compact support on $\mathbb{N}_0^n,$ then the Fourier-Hermite inversion formula is given by
\begin{equation}
f(x)=\sum_{\nu\in\mathbb{N}_0^n}(\mathscr{F}_Hf)(\nu)\phi_\nu(x).
\end{equation}
Now, a pseudo-multiplier $T_m$ with symbol $m$ has, in terms of the transformation $\mathscr{F}_H,$ the alternative representation
\begin{equation}
T_{m}f(x)=\mathscr{F}^{-1}_{H}[m(x,\nu)(\mathscr{F}_Hf)](x).
\end{equation}
For properties and basics of the Fourier-Hermite transform and Hermite  expansions we refer the reader to Thangavelu \cite{Thangavelu}.

\subsection{Hermite functions in $L^p$ spaces}The main tool in the formulation of our results will be estimates of the $L^p$-norms of Hermite functions.  Our starting point is the following lemma for one-dimensional Hermite functions (see Lemma 4.5.2 of Thangavelu \cite{Thangavelu}).
\begin{lemma}\label{Lemma11}
Let us denote by $\phi_{\nu},$ $\nu\in\mathbb{N}_0^n,$ the Hermite functions. As $\nu\rightarrow \infty,$ these functions satisfy the estimates
\begin{itemize}
\item $\Vert \phi_\nu\Vert_{L^p(\mathbb{R})}\asymp  \nu^{\frac{1}{2p}-\frac{1}{4}},$  $1\leq p<4.$
\item $\Vert \phi_\nu\Vert_{L^4(\mathbb{R})}\asymp \nu^{-\frac{1}{8}}\ln(\nu).$
\item $\Vert \phi_\nu\Vert_{L^p(\mathbb{R})}\asymp  \nu^{-\frac{1}{6p}-\frac{1}{12}},$ $4<p\leq \infty.$  \end{itemize}
\end{lemma}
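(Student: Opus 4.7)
The plan is to derive the three $L^p$ bounds from sharp pointwise asymptotics for the one-dimensional Hermite functions, exploiting three characteristic regions determined by the eigenvalue equation $\phi_\nu''+(2\nu+1-x^2)\phi_\nu=0$. Writing $x_\nu:=\sqrt{2\nu+1}$ for the classical turning point, the real line splits into the oscillatory zone $|x|<x_\nu-c\nu^{-1/6}$, the Airy transition zone $\bigl||x|-x_\nu\bigr|\lesssim\nu^{-1/6}$, and the exponential decay zone $|x|>x_\nu+c\nu^{-1/6}$.

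First I would invoke the Plancherel--Rotach asymptotics (equivalently, a WKB analysis consistent with the normalization $\|\phi_\nu\|_2=1$) to obtain pointwise estimates: in the oscillatory zone $|\phi_\nu(x)|\asymp(x_\nu^2-x^2)^{-1/4}|\cos\theta_\nu(x)|$ for some rapid phase $\theta_\nu$; in the Airy zone $|\phi_\nu(x)|\asymp\nu^{-1/12}|\mathrm{Ai}(\nu^{1/6}(x-x_\nu))|$, where $\mathrm{Ai}$ is the Airy function; and in the decay zone $|\phi_\nu(x)|$ is rapidly decreasing and contributes negligibly to every $L^p$ integral. The two-sided estimate (not just the upper bound) is needed to obtain the $\asymp$ in the conclusion.

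Next I would compute $\|\phi_\nu\|_{L^p}^p$ region by region. A change of variables $u=x_\nu-x$ in the oscillatory integral gives
\[
\int_0^{x_\nu-c\nu^{-1/6}}(x_\nu^2-x^2)^{-p/4}\,dx\;\asymp\;x_\nu^{-p/4}\int_{c\nu^{-1/6}}^{x_\nu}u^{-p/4}\,du,
\]
which for $1\leq p<4$ is of order $x_\nu^{1-p/2}\asymp\nu^{1/2-p/4}$, producing $\|\phi_\nu\|_p\asymp\nu^{1/(2p)-1/4}$; for $p=4$ the integral contains a logarithm of size $\asymp\log\nu$, from which the stated logarithmic correction follows; for $p>4$ it is dominated by the endpoint $u\sim\nu^{-1/6}$ and turns out to be subdominant. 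In the Airy zone, $|\phi_\nu|\asymp\nu^{-1/12}$ on a subset of measure $\asymp\nu^{-1/6}$ (choosing the portion of the Airy profile where $|\mathrm{Ai}|$ is bounded below), contributing $\nu^{-p/12-1/6}$ to $\|\phi_\nu\|_p^p$ and giving $\|\phi_\nu\|_p\asymp\nu^{-1/12-1/(6p)}$ when $p>4$. Averaging $|\cos|^p$ over a period absorbs the oscillatory factor in the lower bound.

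The main technical obstacle is rigorously controlling the remainder in the pointwise asymptotics uniformly across the turning points, where the naive WKB ansatz diverges and must be matched to an Airy profile; the upper and lower bounds must be carried uniformly in $\nu$ in order to produce genuine $\asymp$ conclusions and not merely $\lesssim$ ones. A complete proof proceeds via the explicit integral representation of $\phi_\nu$ and a saddle-point analysis with three confluent critical points near the turning point; this is carried out in Thangavelu \cite{Thangavelu} as Lemma~4.5.2, and I would follow that route.
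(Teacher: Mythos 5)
The paper does not actually prove this lemma: it is quoted directly as Lemma~4.5.2 of Thangavelu \cite{Thangavelu}, so there is no in-paper argument to compare against. Your sketch is the standard route to that result --- Plancherel--Rotach asymptotics with the three regions (oscillatory zone, Airy transition layer of width $\nu^{-1/6}$ about the turning point $x_\nu=\sqrt{2\nu+1}$, exponential decay zone), followed by a region-by-region computation of $\Vert\phi_\nu\Vert_{L^p}^p$ --- and your exponent bookkeeping is correct. Two small points. First, for $p=4$ your own computation yields $\Vert\phi_\nu\Vert_{L^4}^4\asymp\nu^{-1/2}\log\nu$, hence $\Vert\phi_\nu\Vert_{L^4}\asymp\nu^{-1/8}(\log\nu)^{1/4}$; this is the classical Askey--Wainger statement, and the lemma as printed, with $\ln\nu$ in place of $(\ln\nu)^{1/4}$, is the imprecise one, so you should not say that ``the stated logarithmic correction follows'' --- what follows is the corrected power $1/4$ (harmless for the rest of the paper, which only uses $p\neq 4$ or upper bounds). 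Second, for $p>4$ the oscillatory-zone integral truncated at $u\sim\nu^{-1/6}$ contributes $\asymp\nu^{-p/12-1/6}$, which is of exactly the same order as the Airy-zone contribution rather than subdominant to it; this does not change the answer, and indeed the matching at the turning point forces the two to agree, but calling it subdominant is inaccurate. With those caveats, deferring the uniform control of the error terms across the turning point to Thangavelu (or Askey--Wainger) is a legitimate way to close the argument, and is in effect all the paper itself does.
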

Now, we present a lemma on the behaviour of $L^p(\mathbb{R}^n)$- norms of Hermite functions on $\mathbb{R}^n$ for all $1\leq p\leq 2.$
\begin{lemma}\label{Lemma10}
Let $\phi_\nu,$ $\nu\in\mathbb{N}_0^n,$ be  Hermite functions on $\mathbb{R}^n.$ Then for $1\leq p\leq 2$ we have
\begin{eqnarray}
\Vert \phi_{\nu} \Vert_{L^p(\mathbb{R}^n)}\lesssim |\nu|^{\frac{n}{2}(\frac{1}{p}-\frac{1}{2})}.
\end{eqnarray}
\end{lemma}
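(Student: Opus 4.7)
The proof should reduce to the one-dimensional estimate of Lemma \ref{Lemma11} by using the tensor-product structure of the multi-dimensional Hermite functions. The plan is as follows.

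First, recall that $\phi_\nu(x) = \prod_{j=1}^n \phi_{\nu_j}(x_j)$. By Fubini's theorem,
\begin{equation*}
\|\phi_\nu\|_{L^p(\mathbb{R}^n)} = \prod_{j=1}^n \|\phi_{\nu_j}\|_{L^p(\mathbb{R})},
\end{equation*}
so the problem immediately reduces to a one-dimensional estimate factor by factor.

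Next, since $1 \leq p \leq 2 < 4$, I would apply the first bullet of Lemma \ref{Lemma11}, which gives the asymptotic $\|\phi_{\nu_j}\|_{L^p(\mathbb{R})} \asymp \nu_j^{\frac{1}{2p}-\frac{1}{4}}$ as $\nu_j \to \infty$. Combining this with the trivial fact that for each fixed $\nu_j$ the Hermite function $\phi_{\nu_j}$ has finite $L^p$-norm, we obtain a uniform estimate of the form
\begin{equation*}
\|\phi_{\nu_j}\|_{L^p(\mathbb{R})} \lesssim (1+\nu_j)^{\frac{1}{2p}-\frac{1}{4}}
\end{equation*}
valid for every $\nu_j \in \mathbb{N}_0$, where the implicit constant depends only on $p$. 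Set $\alpha := \frac{1}{2p}-\frac{1}{4}$; note that $\alpha \geq 0$ precisely because $p \leq 2$, which is what makes the next step work.

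Multiplying these bounds gives $\|\phi_\nu\|_{L^p(\mathbb{R}^n)} \lesssim \prod_{j=1}^n (1+\nu_j)^{\alpha}$, and the remaining task is to compare this with $|\nu|^{n\alpha}$. The arithmetic-geometric mean inequality yields
\begin{equation*}
\prod_{j=1}^n (1+\nu_j) \leq \left(\frac{1}{n}\sum_{j=1}^n (1+\nu_j)\right)^{\!n} = \left(1+\frac{|\nu|}{n}\right)^{\!n} \lesssim (1+|\nu|)^{n},
\end{equation*}
so since $\alpha \geq 0$ we may raise to the power $\alpha$ to conclude
\begin{equation*}
\|\phi_\nu\|_{L^p(\mathbb{R}^n)} \lesssim (1+|\nu|)^{n\alpha} = (1+|\nu|)^{\frac{n}{2}(\frac{1}{p}-\frac{1}{2})},
\end{equation*}
which, for $|\nu| \geq 1$, is equivalent to the stated bound $|\nu|^{\frac{n}{2}(\frac{1}{p}-\frac{1}{2})}$. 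The case $\nu=0$ is trivial. There is no serious obstacle here: the only point that requires care is the sign $\alpha \geq 0$, which is precisely what restricts the statement to the range $1 \leq p \leq 2$ and makes the monotonicity step in AM-GM legitimate.
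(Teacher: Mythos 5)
Your proposal is correct and follows essentially the same route as the paper's own proof: factor $\phi_\nu$ as a tensor product, invoke the first estimate of Lemma \ref{Lemma11} (valid since $1\leq p\leq 2<4$), observe that the exponent $\frac{1}{2p}-\frac{1}{4}$ is nonnegative, and conclude by the arithmetic--geometric mean inequality. Your use of $(1+\nu_j)$ in place of $\nu_j$ is a minor but welcome refinement that handles indices with $\nu_j=0$, a point the paper's proof glosses over.
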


\begin{proof}
We will use the first equivalence in Lemma \ref{Lemma11}. Every Hermite function on $\mathbb{R}^n$ has the form $\phi_\nu=\phi_{\nu_1}\times \cdots \times \phi_{\nu_n} $ and as a consequence we have
\begin{align}
\Vert \phi_\nu\Vert_{L^p(\mathbb{R}^n)}=\prod_{j}\Vert \phi_{\nu_j}\Vert_{L^p(\mathbb{R})}.
\end{align}
Now, if $1\leq p\leq 2$ then $\frac{1}{2p}-\frac{1}{4}\geq 0$ and
\begin{align}
\Vert \phi_\nu\Vert_{L^p(\mathbb{R}^n)}\asymp \left(\prod_{j}|\nu_j|\right)^{\frac{1}{2p}-\frac{1}{4}}\leq \left(\frac{\sum_{j}|\nu_j|}{n}\right)^{n(\frac{1}{2p}-\frac{1}{4})}\lesssim |\nu|^{\frac{n}{2}(\frac{1}{p}-\frac{1}{2})},
\end{align}
where  we have used the inequality $x_1\times \cdots \times x_n\leq (\frac{x_1+\cdots+ x_n}{n})^{n}$  for  $x_i>0.$
\end{proof}
We now recall the following sharp lemma on the $L^p$-norms of Hermite functions for $2\leq p\leq \infty$ (see H. Koch and D. Tataru \cite{Koch}).
\begin{lemma}\label{tataru}
Let us consider a Hermite function $\phi=\phi_\nu$ on $\mathbb{R}^n$ which, as an eigenfunction of the harmonic oscillator on $\mathbb{R}^n,$ has the associated eigenvalue $\lambda^2=(2|\nu|+n).$ Then for $n\geq 2$ we have,
\begin{itemize}
\item if $2\leq p<\frac{2(n+3)}{n+1},$ then $\Vert \phi_\nu\Vert_{L^p(\mathbb{R}^n)}\lesssim (2|\nu|+n)^{\frac{1}{2p}-\frac{1}{4}},$
\item if $\frac{2(n+3)}{n+1}<p\leq \frac{2n}{n-2},$ then $\Vert \phi_\nu\Vert_{L^p(\mathbb{R}^n)}\lesssim (2|\nu|+n)^{-\frac{1}{6}+\frac{n}{6}(\frac{1}{2}-\frac{1}{p})},$
\item if $\frac{2n}{n-2}\leq p\leq \infty,$ then $\Vert \phi_\nu\Vert_{L^p(\mathbb{R}^n)}\lesssim (2|\nu|+n)^{-\frac{1}{2}+\frac{n}{2}(\frac{1}{2}-\frac{1}{p})},$
\end{itemize} and for $n=1,$
\begin{itemize}
\item if $2\leq p<4,$ $\Vert \phi_\nu\Vert_{L^p(\mathbb{R})}\lesssim (2\nu+n)^{-\frac{1}{2}(\frac{1}{2}-\frac{1}{p})}, $
\item if $4<p\leq \infty,$ $\Vert \phi_\nu\Vert_{L^p(\mathbb{R})}\lesssim(2\nu+n)^{-\frac{1}{6}+\frac{1}{6}(\frac{1}{p}-\frac{1}{2})}.$
\end{itemize}
\end{lemma}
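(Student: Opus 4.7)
The plan is to reduce the eigenfunction bounds of Lemma \ref{tataru} to sharp operator estimates on the spectral projector $\Pi_{\lambda^2}$ of the harmonic oscillator $H$ onto the eigenspace associated with the eigenvalue $\lambda^2 = 2|\nu|+n$. Since $\phi_\nu = \Pi_{\lambda^2}\phi_\nu$ and $\|\phi_\nu\|_{L^2}=1$, the inequality
\begin{equation*}
\|\phi_\nu\|_{L^p(\mathbb{R}^n)} \leq \|\Pi_{\lambda^2}\|_{L^2\to L^p}
\end{equation*}
transfers the problem to a uniform $L^2\to L^p$ bound for the projector, which is precisely the content of the Koch--Tataru theorem in \cite{Koch}.

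The first step is to produce a workable representation of the kernel $\Pi_{\lambda^2}(x,y)$, which can be extracted from Mehler's closed form for the Hermite heat semigroup $e^{-tH}(x,y)$ by a contour deformation in the parameter $t$. A $TT^*$ reduction then replaces the $L^2\to L^p$ estimate by bounds on the $L^{p'}\to L^p$ norm of $\Pi_{\lambda^2}$, which in turn is controlled by pointwise decay and oscillatory integral estimates on the kernel. I would then partition $\mathbb{R}^n\times \mathbb{R}^n$ into the classically allowed bulk $|x|,|y|\leq (1-\delta)\lambda$, the Airy-type transition shell of width $\lambda^{-1/3}$ around the caustic $|x|=\lambda$, and the exponentially small forbidden region. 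Stationary phase on the bulk produces the Stein--Tomas restriction exponent $\frac{2(n+3)}{n+1}$; Airy asymptotics on the caustic yield the intermediate regime up to $\frac{2n}{n-2}$; and Gaussian beam concentrations on the energy sphere saturate the bound beyond $\frac{2n}{n-2}$. In dimension one, the Plancherel--Rotach asymptotics for $\phi_\nu$ give direct pointwise estimates on each of the three regions, and integration recovers Lemma \ref{Lemma11} without invoking the projector at all.

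The main obstacle is the analysis at the critical exponents $\frac{2(n+3)}{n+1}$ and $\frac{2n}{n-2}$, where the stationary-phase expansion degenerates along the caustic set and a second-microlocal wave-packet decomposition becomes necessary in order to identify the sharp exponent. Rather than reproduce that delicate machinery, I would invoke the sharp spectral projection estimates of Koch--Tataru \cite{Koch} as a black box, which is exactly how the lemma is stated in the paper; the outline in the preceding paragraph should then be read as the blueprint for a self-contained proof of the underlying Koch--Tataru result.
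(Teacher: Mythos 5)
Your proposal is correct and coincides with what the paper does: Lemma \ref{tataru} is simply recalled from Koch--Tataru \cite{Koch} with no proof given, exactly as you ultimately invoke it as a black box (your reduction $\Vert \phi_\nu\Vert_{L^p}\leq \Vert \Pi_{\lambda^2}\Vert_{L^2\to L^p}$ with $\Vert\phi_\nu\Vert_{L^2}=1$ is the standard and valid way to pass from the spectral cluster bounds of \cite{Koch} to the stated eigenfunction bounds). The only caveat is that your sketched blueprint for the underlying result (stationary phase and Airy analysis on the Mehler kernel) is not how \cite{Koch} actually proceeds --- as the paper itself remarks, their proof goes through dispersive and Strichartz estimates for the Schr\"odinger propagator of the harmonic oscillator --- but since both you and the paper cite the result rather than prove it, this does not affect the argument.
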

It is important to mention that in the previous lemma we denote $\frac{2n}{n-2}=\infty,$ when $n=2.$ We adopt this convention in the whole paper. Let us mention that, curiously, the proof of the lemma above is a consequence of some dispersive
and Strichartz estimates for the corresponding Schr\"odinger equation for the harmonic oscillator. In our further analysis, we will  need the following lemma.
\begin{lemma}\label{Lemma1}
Let us assume that $2\leq p\leq \infty$ and $n\geq 2.$ Then, the Hermite functions satisfy the following estimates as $|\nu|\rightarrow \infty:$
\begin{itemize}
\item if $2\leq p<\frac{2(n+3)}{n+1},$ then $$\Vert \phi_\nu\Vert_{L^p(\mathbb{R}^n)}\Vert \phi_\nu\Vert_{L^{p'}(\mathbb{R}^n)}\lesssim |\nu|^{\frac{n-1}{2}(\frac{1}{2}-\frac{1}{p})},$$
\item if $\frac{2(n+3)}{n+1}<p\leq \frac{2n}{n-2},$ then $$\Vert \phi_\nu\Vert_{L^p(\mathbb{R}^n)}\Vert \phi_\nu\Vert_{L^{p'}(\mathbb{R}^n)}\lesssim |\nu|^{-\frac{1}{6}+\frac{2n}{3}(\frac{1}{2}-\frac{1}{p})},$$
\item if $\frac{2n}{n-2}\leq p\leq \infty,$ then $$\Vert \phi_\nu\Vert_{L^p(\mathbb{R}^n)}\Vert \phi_\nu\Vert_{L^{p'}(\mathbb{R}^n)}\lesssim |\nu|^{-\frac{1}{2}+n(\frac{1}{2}-\frac{1}{p})}.$$
\end{itemize}
Let us recall that we have denoted $\frac{2n}{n-2}=\infty$ when $n=2.$ For $n=1$ we have
\begin{itemize}
\item if $2\leq p<4,$ $$\Vert \phi_\nu\Vert_{L^p(\mathbb{R})}\Vert \phi_\nu\Vert_{L^{p'}(\mathbb{R})}\lesssim 1,$$
\item if $4<p\leq \infty,$  $$\Vert \phi_\nu\Vert_{L^p(\mathbb{R})}\Vert \phi_\nu\Vert_{L^{p'}(\mathbb{R})}\lesssim \nu^{-\frac{1}{6}+\frac{2}{3}(\frac{1}{2}-\frac{1}{p})}.$$
\end{itemize}
In general:
\begin{itemize} \item for every $\frac{4}{3}<p<4$ and every $n,$  $\Vert \phi_\nu\Vert_{L^p(\mathbb{R}^n)}\Vert \phi_\nu\Vert_{L^{p'}(\mathbb{R}^n)}=O(1).$ \end{itemize}
\end{lemma}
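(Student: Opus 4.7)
The strategy is immediate multiplication of the known one-sided bounds. Since $p\geq 2$ forces $p'\leq 2$, the factor $\|\phi_\nu\|_{L^{p'}(\mathbb{R}^n)}$ is controlled by Lemma~\ref{Lemma10}, which yields $|\nu|^{\frac{n}{2}(\frac{1}{2}-\frac{1}{p})}$ uniformly, while the factor $\|\phi_\nu\|_{L^{p}(\mathbb{R}^n)}$ is supplied by Lemma~\ref{tataru} when $n\geq 2$ and by Lemma~\ref{Lemma11} when $n=1$. Once these two inputs are recorded, the proof reduces to exponent arithmetic in each declared range.

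For $n\geq 2$ and $2\leq p<\frac{2(n+3)}{n+1}$, the Koch--Tataru exponent $\frac{1}{2p}-\frac{1}{4}$ added to the dual exponent $\frac{n}{2}(\frac{1}{2}-\frac{1}{p})$ collapses to $\frac{n-1}{2}(\frac{1}{2}-\frac{1}{p})$. For $\frac{2(n+3)}{n+1}<p\leq \frac{2n}{n-2}$, combining $-\frac{1}{6}+\frac{n}{6}(\frac{1}{2}-\frac{1}{p})$ with the dual exponent and using $\frac{n}{6}+\frac{n}{2}=\frac{2n}{3}$ produces $-\frac{1}{6}+\frac{2n}{3}(\frac{1}{2}-\frac{1}{p})$. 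In the range $\frac{2n}{n-2}\leq p\leq \infty$, adding $-\frac{1}{2}+\frac{n}{2}(\frac{1}{2}-\frac{1}{p})$ and $\frac{n}{2}(\frac{1}{2}-\frac{1}{p})$ gives $-\frac{1}{2}+n(\frac{1}{2}-\frac{1}{p})$, as claimed.

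For $n=1$ I would apply Lemma~\ref{Lemma11} to both factors. When $2\leq p<4$, both $p$ and $p'$ lie in $(1,4)$, so the first regime governs each factor and the product equals $\nu^{(\frac{1}{2p}-\frac{1}{4})+(\frac{1}{2p'}-\frac{1}{4})}=\nu^{0}=1$. When $4<p\leq\infty$, combining $\nu^{-\frac{1}{12}-\frac{1}{6p}}$ on the $L^p$-side with $\nu^{\frac{1}{4}-\frac{1}{2p}}$ on the $L^{p'}$-side (valid since $p'\in(1,4/3)\subset(1,4)$) yields the exponent $\frac{1}{6}-\frac{2}{3p}=-\frac{1}{6}+\frac{2}{3}(\frac{1}{2}-\frac{1}{p})$. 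Finally, the universal statement for $\frac{4}{3}<p<4$ follows by tensorisation of the Hermite basis: $\phi_\nu=\phi_{\nu_1}\otimes\cdots\otimes\phi_{\nu_n}$, so $L^{r}$-norms factor across coordinates, and in this range both $p$ and $p'$ lie in $(4/3,4)\subset(1,4)$, which puts every one-dimensional factor in the first regime of Lemma~\ref{Lemma11}; the one-dimensional computation just performed then gives $\prod_{j=1}^{n}O(1)=O(1)$.

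There is no substantive obstacle: the argument is bookkeeping of exponents, and the only conceptual point is that the assumption $p\geq 2$ is precisely what permits the use of Lemma~\ref{Lemma10} on the dual side, while the tensor-product structure of the Hermite basis allows the borderline $\frac{4}{3}<p<4$ case to be reduced to a purely one-dimensional identity.
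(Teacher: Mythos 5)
Your proposal is correct and follows essentially the same route as the paper: the first five items are obtained by multiplying the Koch--Tataru bound (Lemma \ref{tataru}, or Lemma \ref{Lemma11} when $n=1$) on the $L^p$ side by the Lemma \ref{Lemma10} bound on the $L^{p'}$ side and doing the exponent arithmetic, and the final item for $\frac{4}{3}<p<4$ is handled exactly as in the paper by tensorising over coordinates and using the first regime of Lemma \ref{Lemma11} for both $p$ and $p'$. The computations check out in every range.
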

\begin{proof}
Except for the last item, the proof is a straightforward computation by replacing $p$ in Lemma  \ref{Lemma10} and the estimates in Lemma \ref{tataru}. The last item can be proved by using that $p,p'\in(\frac{4}{3},4)$ and the first estimate in Lemma \ref{Lemma11}, in fact
\begin{align*}
\Vert \phi_\nu\Vert_{L^p(\mathbb{R}^n)}\Vert \phi_\nu\Vert_{L^{p'}(\mathbb{R}^n)} &=\prod_{j}\Vert \phi_{\nu_j}\Vert_{L^p(\mathbb{R})}\Vert \phi_{\nu_j}\Vert_{L^{p'}(\mathbb{R})}\\
&\asymp \prod_{j}\nu_j^{\frac{1}{2p}-\frac{1}{4}}\nu_j^{\frac{1}{2p'}-\frac{1}{4}}= \prod_{j}\nu_j^{\frac{1}{2}(\frac{1}{p}+\frac{1}{p'})-\frac{1}{2}}= \prod_{j} 1=1,
\end{align*} completing the proof.
\end{proof}
\begin{remark}\label{infty}
Because $\Vert \phi_{\nu_j}\Vert_{L^\infty(\mathbb{R})}\lesssim |\nu_j|^{-\frac{1}{12}}$ when $\nu_j\rightarrow\infty,$ we can estimate $\Vert \phi_\nu\Vert_{L^\infty(\mathbb{R}^n)}\lesssim |\nu|^{-\frac{1}{12}}.$ Indeed, when $|\nu|\rightarrow \infty,$ then $\nu_i:=\max_{1\leq j\leq n}\nu_j\rightarrow \infty,$  and from the inequality $|\nu|\leq n\nu_i$ we obtain $\nu_i^{-\frac{1}{12}}\leq n^{\frac{1}{12}}|\nu|^{-\frac{1}{12}}$ which implies the desired estimate.
\end{remark}

\subsection{H\"ormander condition for pseudo-multipliers on $L^p$ spaces}

Now, we analyse the boundedness of pseudo-multipliers with symbols in (locally uniform)  Sobolev spaces. We denote by $\gamma_p$ the exponent that according to Lemma \ref{Lemma1} satisfies
\begin{equation}\label{gammap}
\Vert \phi_\nu\Vert_{L^{p}(\mathbb{R}^n)} \Vert  \phi_\nu\Vert_{L^{p'}(\mathbb{R}^n)}\lesssim |\nu|^{\gamma_p}.
\end{equation}

\begin{remark}\label{lowerboundforgammap}
Since
\begin{itemize}
\item $n \geq 2,$ $2\leq p<\frac{2(n+3)}{n+1},$ implies $0\leq \gamma_p:={\frac{n-1}{2}(\frac{1}{2}-\frac{1}{p})}< \frac{n-1}{2(n+3)},$
\item $n \geq 2,$ $\frac{2(n+3)}{n+1}<p\leq \frac{2n}{n-2},$ implies $ -\frac{1}{6}+\frac{2n}{3(n+3)}\leq \gamma_p:={-\frac{1}{6}+\frac{2n}{3}(\frac{1}{2}-\frac{1}{p})}<\frac{1}{2} ,$
\item $n \geq 2,$ $\frac{2n}{n-2}\leq p\leq \infty,$ implies $ \frac{1}{2}\leq \gamma_p:={-\frac{1}{2}+n(\frac{1}{2}-\frac{1}{p})}\leq \frac{n-1}{2},$
\item $n=1,$ $2\leq p<4,$ implies $\gamma_p=0,$
\item $n=1,$ $4<p\leq \infty,$ implies $\frac{1}{4}<\gamma_p:=\frac{1}{2}-\frac{1}{p}\leq \frac{1}{2},$
\end{itemize}
we have that $\gamma_p\geq 0,$ for all $2\leq p\leq \infty.$ This lower bound will be useful in our further analysis.
\end{remark}
\begin{proposition}\label{proposition1}
Let us consider $1<p<\infty$ and $s>\frac{3n}{2}+\gamma_p-\frac{1}{12}.$ If $T_m$ is a pseudo-multiplier with symbol $m$ satisfying
\begin{eqnarray}\label{FHcondition}
\Vert m \Vert_{l.u.\mathcal{H}^s}=\sup_{k>0,x\in\mathbb{R}^n}2^{k(s-\frac{n}{2})}\Vert \langle \,\cdot\,\rangle^{s}\mathscr{F}_{H}^{-1}[m(x,\cdot)\psi({2^{-k}|\cdot|})]\Vert_{L^2(\mathbb{R}^n)}<\infty,
\end{eqnarray} then $T_{m}$ extends to a bounded operator on $L^p(\mathbb{R}^n).$ Moreover,
\begin{equation}\Vert T_{m} \Vert_{\mathscr{B}(L^p(\mathbb{R}^n))}\leq  C (\Vert m \Vert_{l.u.,\mathcal{H}^s}+\Vert m(\cdot,0)\Vert_{L^\infty(\mathbb{R}^n)}). \end{equation}
\end{proposition}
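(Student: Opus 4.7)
The plan is to decompose the symbol dyadically in the frequency variable $\nu$ and estimate each piece via a sharp pointwise bound on $m_k(y,\nu)$ extracted directly from the weighted $L^2$ hypothesis. Concretely I would write
\begin{equation*}
m(x,\nu)=m(x,0)\,\delta_{\nu,0}+\sum_{k\geq 0} m_k(x,\nu),\qquad m_k(x,\nu):=m(x,\nu)\,\psi(2^{-k}|\nu|),
\end{equation*}
so that each $m_k(x,\cdot)$ is supported in the shell $\{|\nu|\asymp 2^k\}$. The $\nu=0$ term gives a rank-one operator $f\mapsto m(x,0)\widehat{f}(\phi_0)\phi_0(x)$ bounded on $L^p(\mathbb{R}^n)$ with norm $\lesssim \|m(\cdot,0)\|_{L^\infty}\|\phi_0\|_{L^p}\|\phi_0\|_{L^{p'}}$, so it suffices to control $\sum_{k\geq 0}\|T_{m_k}\|_{\mathscr{B}(L^p(\mathbb{R}^n))}$.

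For each dyadic piece I would estimate crudely, combining Minkowski's inequality with H\"older's inequality $|\widehat{f}(\phi_\nu)|\leq \|\phi_\nu\|_{L^{p'}}\|f\|_{L^p}$ to obtain
\begin{equation*}
\|T_{m_k}f\|_{L^p}\leq \|f\|_{L^p}\sum_{\nu:\,|\nu|\asymp 2^k}\|m_k(\cdot,\nu)\|_{L^\infty}\,\|\phi_\nu\|_{L^p}\|\phi_\nu\|_{L^{p'}}.
\end{equation*}
By Lemma \ref{Lemma1} (applied to $p'$ when $p<2$) the product $\|\phi_\nu\|_{L^p}\|\phi_\nu\|_{L^{p'}}\lesssim |\nu|^{\gamma_p}\asymp 2^{k\gamma_p}$ on the shell, and the number of lattice points there is $\asymp 2^{kn}$, so the whole problem reduces to a sharp pointwise estimate on $\|m_k(\cdot,\nu)\|_{L^\infty}$. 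Writing $G_k(y,x):=\mathscr{F}_H^{-1}[m(y,\cdot)\psi(2^{-k}|\cdot|)](x)$, the Fourier--Hermite inversion formula yields $m_k(y,\nu)=\int G_k(y,x)\phi_\nu(x)\,dx$; inserting the factor $\langle x\rangle^s\langle x\rangle^{-s}$ and applying Cauchy--Schwarz,
\begin{equation*}
|m_k(y,\nu)|\leq \|\langle x\rangle^{-s}\phi_\nu\|_{L^2_x}\,\|\langle x\rangle^s G_k(y,\cdot)\|_{L^2_x}\lesssim \|\phi_\nu\|_{L^\infty}\,2^{-k(s-n/2)}\,\|m\|_{l.u.,\mathcal{H}^s},
\end{equation*}
where the first factor is finite because $s>\tfrac{n}{2}$ forces $\langle x\rangle^{-s}\in L^2(\mathbb{R}^n)$. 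The estimate $\|\phi_\nu\|_{L^\infty}\lesssim |\nu|^{-1/12}$ from Remark \ref{infty} is precisely what produces the $-\tfrac{1}{12}$ improvement visible in the hypothesis.

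Putting the pieces together, the inner sum over $|\nu|\asymp 2^k$ is dominated by
\begin{equation*}
2^{kn}\cdot 2^{-k/12}\cdot 2^{-k(s-n/2)}\cdot 2^{k\gamma_p}=2^{k(\frac{3n}{2}+\gamma_p-\frac{1}{12}-s)},
\end{equation*}
so $\|T_{m_k}\|_{\mathscr{B}(L^p)}\lesssim \|m\|_{l.u.,\mathcal{H}^s}\,2^{k(\frac{3n}{2}+\gamma_p-\frac{1}{12}-s)}$, which is a convergent geometric series exactly when $s>\tfrac{3n}{2}+\gamma_p-\tfrac{1}{12}$; summation, together with the $\nu=0$ contribution, yields the claimed bound $\|T_m\|_{\mathscr{B}(L^p(\mathbb{R}^n))}\leq C(\|m\|_{l.u.,\mathcal{H}^s}+\|m(\cdot,0)\|_{L^\infty(\mathbb{R}^n)})$. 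The main obstacle, which dictates the whole strategy, is extracting a bound on $\|m_k(\cdot,\nu)\|_{L^\infty}$ that simultaneously captures the frequency decay encoded in the weighted $L^2$ hypothesis and the sharp $L^\infty$-decay of the Hermite eigenfunctions; any cruder input---for instance using $\|\phi_\nu\|_{L^2}=1$, or Plancherel in $\nu$ rather than Cauchy--Schwarz in $x$ against the weight $\langle x\rangle^s$---loses either the factor $2^{-k(s-n/2)}$ or the $|\nu|^{-1/12}$ gain and forces a strictly larger threshold on $s$.
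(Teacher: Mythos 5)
Your argument is correct and follows essentially the same route as the paper's: dyadic localization of the symbol in $\nu$, Fourier--Hermite inversion plus Cauchy--Schwarz against the weight $\langle x\rangle^{s}$ to extract the pointwise bound $|m_k(y,\nu)|\lesssim \Vert \phi_\nu\Vert_{L^\infty}\,2^{-k(s-\frac{n}{2})}\Vert m\Vert_{l.u.,\mathcal{H}^s}$, the gains $|\nu|^{-1/12}$ and $|\nu|^{\gamma_p}$ from Remark \ref{infty} and Lemma \ref{Lemma1}, and the count of $\asymp 2^{kn}$ lattice points per shell, summing to a geometric series precisely when $s>\frac{3n}{2}+\gamma_p-\frac{1}{12}$. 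The only cosmetic differences are that the paper decomposes with the sharp indicators $1_{\{2^{k}\leq|\nu|<2^{k+1}\}}$ (which reconstruct $m$ exactly, whereas $\sum_{k}\psi(2^{-k}|\nu|)$ need not equal $1$; your pointwise estimate still applies on each shell since $\psi(2^{-k}|\nu|)=1$ there) and obtains the $L^p$ bound by duality against $g\in L^{p'}$ rather than by Minkowski plus H\"older, which produces the identical factor $\Vert\phi_\nu\Vert_{L^p}\Vert\phi_\nu\Vert_{L^{p'}}$.
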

\begin{proof}
In order to prove  Proposition \ref{proposition1} we will decompose the symbol $m$ as
 \begin{equation}
  m(x,\nu)=m(x,0)+\sum_{k=0}^{\infty}  m_k(x,\nu),\,\,\,\,\, m_k(x,\nu):= m(x,\nu)\cdot 1_{\{2^{k}\leq |\nu|<2^{k+1}\}}.
 \end{equation}
Let us denote by $T_{m(j)}  $ the pseudo-multiplier associated to $m_j,$  for $j\geq 0,$ and by $T_{0} $ the operator with symbol   $\sigma\equiv m(x,0)\delta_{\nu,0}.$ Then we want to show that the operator series
\begin{equation}
T_0+\sum_{k} T_{m(k)}
\end{equation}
converges to $T_m$ in the strong topology on $\mathscr{B}(L^p(\mathbb{R}^n))$ and
\begin{equation}
\Vert T_m \Vert_{\mathscr{B}(L^p(\mathbb{R}^n))}  \leq \Vert T_0 \Vert_{\mathscr{B}(L^p(\mathbb{R}^n))} +\sum_k  \Vert T_{m(k)} \Vert_{\mathscr{B}(L^p(\mathbb{R}^n))} .
\end{equation}

So, we want to estimate every norm $\Vert T_{m(j)} \Vert_{\mathscr{B}(L^p(\mathbb{R}^n))}.$
For this, we will use the fact that for $f\in C^\infty_{0}(\mathbb{R}^n),$
\begin{equation}
\Vert T_{m(j)}f \Vert_{L^p(\mathbb{R}^n)}=\sup\{ ( T_{m(j)} f,g)_{L^2(\mathbb{R}^n)}\, :\, \Vert g\Vert_{L^{p'}(\mathbb{R}^n)}=1 \}.
\end{equation}
In fact, for $f$ and $g$ as above we have
\begin{align*}
( T_{m(k)} f,\overline{g})_{L^2(\mathbb{R}^n)} &=\int_{\mathbb{R}^n}T_{m(k)}f(x)g(x)dx\\
&=\int_{\mathbb{R}^n}\sum_{2^k\leq|\nu|<2^{k+1}}m(x,\nu)\widehat{f}(\phi_\nu)\phi_\nu(x)g(x)dx\\
&=\int_{\mathbb{R}^n}\int_{\mathbb{R}^n}\sum_{2^k\leq|\nu|<2^{k+1}}m(x,\nu)f(y)\phi_\nu(y)\phi_\nu(x)g(x)dydx.
\end{align*}

For every $x\in\mathbb{R}^n$ let us denote the inverse Fourier-Hermite transform of the sequence $\{m(x,\nu)\psi(2^{-k}|\nu|)\}_{\nu}$ by $\mathscr{F}^{-1}_{H}[m(x,\cdot)\psi(2^{-k}|\cdot|)].$ So, we have
\begin{equation}
m_k(x,\nu)=\mathscr{F}_{H}(\mathscr{F}_{H}^{-1}[m(x,\cdot)\psi(2^{-k}|\cdot|)])(\nu)=\int_{\mathbb{R}^n} \mathscr{F}_{H}^{-1}[m(x,\cdot)\psi(2^{-k}|\cdot|)](z)\phi_{\nu}(z)dz.
\end{equation}
Consequently, we can write
\begin{align*}
&( T_{m(k)} f,\overline{g})_{L^2(\mathbb{R}^n)} =\\
&\int_{\mathbb{R}^n}\int_{\mathbb{R}^n}\sum_{2^k\leq|\nu|<2^{k+1}}\int_{\mathbb{R}^n} \mathscr{F}_{H}^{-1}[m(x,\cdot)\psi(2^{-k}|\cdot|)](z)\phi_{\nu}(z)dz \\
&\hspace{8cm}\times f(y)\phi_\nu(y)\phi_\nu(x)g(x)dydx.
\end{align*}
Now, we have
\begin{align*}
&|( T_{m(k)} f,\overline{g})_{L^2(\mathbb{R}^n)}|\\
&\leq \sum_{2^k\leq|\nu|<2^{k+1}}\sup_{x\in\mathbb{R}^n}\int_{\mathbb{R}^n} |\mathscr{F}_{H}^{-1}[m(x,\cdot)\psi(2^{-k}|\cdot|)](z)||\phi_{\nu}(z)|dz \\
&\hspace{8cm}\times \Vert f\Vert_{L^p}\Vert g\Vert_{L^{p'}}\Vert \phi_\nu\Vert_{L^{p}}\Vert \Vert  \phi_\nu\Vert_{L^{p'}}\\
&\lesssim \sum_{2^k\leq|\nu|<2^{k+1}}\sup_{x\in\mathbb{R}^n}\int_{\mathbb{R}^n} |\mathscr{F}_{H}^{-1}[m(x,\cdot)\psi(2^{-k}|\cdot|)](z)||\phi_{\nu}(z)|dz\\
&\hspace{8cm}\times\Vert f\Vert_{L^p}\Vert g\Vert_{L^{p'}}|\nu|^{\gamma_p}.
\end{align*}
So, we can estimate the operator norm of $T_{m(k)}$ by
\begin{align*}
&\Vert T_{m(k)} \Vert_{\mathscr{B}(L^p)}\\
&\lesssim \sum_{2^k\leq|\nu|<2^{k+1}}\sup_{x\in\mathbb{R}^n}\int_{\mathbb{R}^n} |\mathscr{F}_{H}^{-1}[m(x,\cdot)\psi(2^{-k}|\cdot|)](z)||\phi_{\nu}(z)|dz|\nu|^{\gamma_p}\\
&\lesssim \sum_{2^k\leq|\nu|<2^{k+1}}\sup_{x\in\mathbb{R}^n}\left(\int_{\mathbb{R}^n} \langle z\rangle^{2s}|\mathscr{F}_{H}^{-1}[m(x,\cdot)\psi(2^{-k}|\cdot|)](z)|^{2} dz\right)^{\frac{1}{2}}\Vert \phi_\nu(\cdot)\langle \cdot\rangle^{-s}\Vert_{L^2}|\nu|^{\gamma_p}.
\end{align*}

If we denote by $\theta_\infty$ some  real number satisfying $\Vert \phi_\nu \Vert_{L^{\infty}(\mathbb{R}^n)}\lesssim |\nu|^{\theta_\infty}, $ we can estimate $$\Vert \phi_\nu(\cdot)\langle \cdot\rangle^{-s}\Vert_{L^2}\leq \Vert \phi_{\nu}\Vert_{L^\infty}\Vert \langle \cdot\rangle^{-s}\Vert_{L^2}\lesssim |\nu|^{\theta_{\infty}},$$ if we require $s>\frac{n}{2}.$ By Remark \ref{lowerboundforgammap}, the  condition $s>\frac{n}{2}$ holds true because $s>\frac{3n}{2}+\gamma_p-\frac{1}{12}\geq \frac{3n}{2}-\frac{1}{12}>\frac{n}{2}.$ Now, if additionally we consider the hypothesis
\begin{equation}
\sup_{x\in\mathbb{R}^n}\left(\int_{\mathbb{R}^n} \langle z\rangle^{2s}|\mathscr{F}_{H}^{-1}[m(x,\cdot)\psi(2^{-k}|\cdot|)](z)|^{2} dz\right)^{\frac{1}{2}}\leq \Vert m\Vert_{l.u.\mathcal{H}^s}\cdot 2^{-k(s-\frac{n}{2})},
\end{equation}
then we have
\begin{align*}
\Vert T_{m(k)} \Vert_{\mathscr{B}(L^p)} &\lesssim \sum_{2^k\leq|\nu|<2^{k+1}}\Vert m\Vert_{l.u.\mathcal{H}^s}\cdot 2^{-k(s-\frac{n}{2})}|\nu|^{\gamma_p+\theta_\infty}\\
&\lesssim  2^{kn-k(s-\frac{n}{2})+k\gamma_p+k\theta_\infty}\Vert m\Vert_{l.u.\mathcal{H}^s}=2^{-k(s-\frac{3n}{2}-\gamma_p-\theta_\infty)}.
\end{align*}
Taking into account that
\begin{align*}
\Vert T_{0}f \Vert_{L^p(\mathbb{R}^n)}\lesssim \Vert m(\cdot,0)\Vert_{L^\infty(\mathbb{R}^n)}\Vert  f \Vert_{L^p(\mathbb{R}^n)},\\
\end{align*}
we obtain the boundedness of $T_0$ on $L^p.$ It is clear that if we  want to end the proof, we need  to estimate  $I:=\sum_{k\geq 0} \Vert T_{m(k)} \Vert_{\mathscr{B}(L^p(\mathbb{R}^n))}.$
As a consequence we obtain  $$0\leq I\lesssim \Vert T_{0} \Vert_{\mathscr{B}(L^p)}+ \sum_{k=1}^\infty2^{-k(s-\frac{3n}{2}-\gamma_p-\theta_\infty)} \Vert m\Vert_{l.u., \mathcal{H}^s}<\infty,$$ for $s>\frac{3n}{2}+\gamma_p+\theta_\infty.$ So, we have
$$\Vert T_{m} \Vert_{\mathscr{B}(L^p)}\leq  C (\Vert m \Vert_{l.u.,\mathcal{H}^s}+\Vert m(\cdot,0)\Vert_{L^\infty(\mathbb{R}^n)}). $$ From Remark \ref{infty} we end the proof because we can take $\theta_\infty=-\frac{1}{12}.$
\end{proof}
\begin{proposition}\label{PFcondition}
Let us consider $1<p<\infty$ and $s>\frac{3n}{2}+\gamma_p.$ If $m:\mathbb{R}^{2n}\rightarrow \mathbb{C}$ is a function, and  $T_m$ is a pseudo-multiplier with symbol $ \{m(x,\nu)\}_{x\in\mathbb{R}^n,\nu\in\mathbb{N}_0^n}$ satisfying
\begin{eqnarray}\label{Fcondition}
\Vert m \Vert_{l.u.{H}^s}=\sup_{k>0,x\in\mathbb{R}^n}2^{k(s-\frac{n}{2})}\Vert \langle \,\cdot\,\rangle^{s}\mathscr{F}[m(x,\cdot)\psi({2^{-k}|\cdot|})]\Vert_{L^2(\mathbb{R}^n)}<\infty,
\end{eqnarray} then $T_{m}$ extends to a bounded operator on $L^p(\mathbb{R}^n).$ Moreover,
\begin{equation}\Vert T_{m} \Vert_{\mathscr{B}(L^p)}\leq  C (\Vert m \Vert_{l.u.,{H}^s}+\Vert m(\cdot,0)\Vert_{L^\infty(\mathbb{R}^n)}). \end{equation}
\end{proposition}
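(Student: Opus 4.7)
The plan is to repeat the dyadic-decomposition/duality scheme of Proposition \ref{proposition1} verbatim, with the Fourier-Hermite transform $\mathscr{F}_H$ replaced by the Euclidean Fourier transform $\mathscr{F}$, and to note exactly where the $-\tfrac{1}{12}$ improvement is lost. First I would split $m(x,\nu)=m(x,0)+\sum_{k\geq 0}m_k(x,\nu)$ with $m_k(x,\nu)=m(x,\nu)\cdot 1_{\{2^k\leq|\nu|<2^{k+1}\}}$, isolate $T_0$ (for which $\|T_0\|_{\mathscr{B}(L^p)}\lesssim\|m(\cdot,0)\|_{L^\infty}$ is trivial), and reduce matters to estimating $\sum_k\|T_{m(k)}\|_{\mathscr{B}(L^p)}$, using the duality identity $\|T_{m(k)}f\|_{L^p}=\sup_{\|g\|_{L^{p'}}=1}|(T_{m(k)}f,g)_{L^2}|$ and expanding via the Hermite expansion of $T_{m(k)}f$.

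The key step is to represent $m_k(x,\nu)$ through the Euclidean transform instead of $\mathscr{F}_H$. Because $m(x,\cdot)\psi(2^{-k}|\cdot|)$ is compactly supported in the second variable, and the hypothesis \eqref{Fcondition} with $s>n/2$ (automatic from $s>\tfrac{3n}{2}+\gamma_p$ together with $\gamma_p\geq 0$ by Remark \ref{lowerboundforgammap}) gives $\mathscr{F}[m(x,\cdot)\psi(2^{-k}|\cdot|)]\in L^1(\mathbb{R}^n)$, classical Fourier inversion yields the pointwise identity
\[
m_k(x,\nu)=\int_{\mathbb{R}^n}\mathscr{F}[m(x,\cdot)\psi(2^{-k}|\cdot|)](\xi)\,e^{2\pi i\xi\cdot\nu}\,d\xi,\qquad \nu\in\mathbb{N}_0^n.
\]
Cauchy--Schwarz against the $L^2$-weight $\langle\xi\rangle^{-s}$ then gives the uniform-in-$(x,\nu)$ bound
\[
|m_k(x,\nu)|\leq \|\langle\cdot\rangle^{-s}\|_{L^2}\,\bigl\|\langle\xi\rangle^{s}\mathscr{F}[m(x,\cdot)\psi(2^{-k}|\cdot|)]\bigr\|_{L^2_\xi}\lesssim 2^{-k(s-\frac{n}{2})}\,\|m\|_{l.u.,H^{s}}.
\]
Here the characters $e^{2\pi i\xi\cdot\nu}$, having modulus one, take the place that the Hermite functions $\phi_\nu(z)$ played in the proof of Proposition \ref{proposition1}; this is exactly where the $L^\infty$-decay $\|\phi_\nu\|_{L^\infty}\lesssim|\nu|^{-1/12}$ of Remark \ref{infty} becomes unavailable, which accounts for the loss of $\tfrac{1}{12}$ relative to \eqref{FHcondition}.

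Substituting this bound back, using $|\widehat{f}(\phi_\nu)|\leq\|f\|_{L^p}\|\phi_\nu\|_{L^{p'}}$ and the analogous estimate for $g$, and invoking Lemma \ref{Lemma1} in the form $\|\phi_\nu\|_{L^p}\|\phi_\nu\|_{L^{p'}}\lesssim|\nu|^{\gamma_p}$, I would sum over the dyadic shell $\{2^k\leq|\nu|<2^{k+1}\}$, of cardinality $\asymp 2^{kn}$, to obtain
\[
\|T_{m(k)}\|_{\mathscr{B}(L^p)}\lesssim \|m\|_{l.u.,H^{s}}\cdot 2^{-k(s-\frac{n}{2})}\cdot 2^{kn}\cdot 2^{k\gamma_p}= \|m\|_{l.u.,H^{s}}\cdot 2^{-k(s-\frac{3n}{2}-\gamma_p)}.
\]
The geometric series in $k$ converges precisely when $s>\tfrac{3n}{2}+\gamma_p$, which is the hypothesis; combining with $T_0$ yields the stated $\|T_m\|_{\mathscr{B}(L^p)}\lesssim\|m\|_{l.u.,H^{s}}+\|m(\cdot,0)\|_{L^\infty}$. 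There is no deep obstacle: the proof is a direct transcription of Proposition \ref{proposition1}, and the only technical point I would verify is the legitimacy of pointwise Fourier inversion at the integer points $\nu\in\mathbb{N}_0^n$, which follows from compact support of $m(x,\cdot)\psi(2^{-k}|\cdot|)$ and $L^1$-integrability of its Fourier transform (a standard density argument in $m$ handles any remaining regularity issues).
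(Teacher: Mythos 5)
Your proposal is correct and follows essentially the same route as the paper's own proof: the dyadic decomposition $m=m(\cdot,0)+\sum_k m_k$, the duality pairing, the representation $m_k(x,\nu)=\int\mathscr{F}[m(x,\cdot)\psi(2^{-k}|\cdot|)](z)e^{2\pi i\nu\cdot z}\,dz$, the Cauchy--Schwarz step against $\langle z\rangle^{-s}$, the bound $\Vert\phi_\nu\Vert_{L^p}\Vert\phi_\nu\Vert_{L^{p'}}\lesssim|\nu|^{\gamma_p}$ from Lemma \ref{Lemma1}, and the geometric series in $k$ converging for $s>\frac{3n}{2}+\gamma_p$. Your observation that the $-\frac{1}{12}$ gain of Proposition \ref{proposition1} is lost precisely because $|e^{2\pi i\xi\cdot\nu}|=1$ replaces $\Vert\phi_\nu\Vert_{L^\infty}\lesssim|\nu|^{-1/12}$ is exactly the distinction the paper draws between the two propositions.
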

\begin{proof}
By following the notation in the proof of Proposition \ref{proposition1} we have
\begin{align*}
( T_{m(k)} f,\overline{g})_{L^2(\mathbb{R}^n)}
&=\int_{\mathbb{R}^n}\int_{\mathbb{R}^n}\sum_{2^k\leq|\nu|<2^{k+1}}m(x,\nu)f(y)\phi_\nu(y)\phi_\nu(x)g(x)dydx.
\end{align*}
For every $x\in\mathbb{R}^n$ let us write
\begin{equation}
m_k(x,\nu)=\mathscr{F}^{-1}(\mathscr{F}[m(x,\cdot)\psi(2^{-k}|\cdot|)])(\nu)=\int_{\mathbb{R}^n} \mathscr{F}[m(x,\cdot)\psi(2^{-k}|\cdot|)](z)e^{2\pi i \nu\cdot z}dz.
\end{equation} So, we have
\begin{align*}
&|( T_{m(k)} f,\overline{g})_{L^2(\mathbb{R}^n)}|\\
&\leq \sum_{2^k\leq|\nu|<2^{k+1}}\sup_{x\in\mathbb{R}^n}\int_{\mathbb{R}^n} |\mathscr{F}[m(x,\cdot)\psi(2^{-k}|\cdot|)](z)|dz \\
&\hspace{8cm}\times \Vert f\Vert_{L^p}\Vert g\Vert_{L^{p'}}\Vert \phi_\nu\Vert_{L^{p}}\Vert \Vert  \phi_\nu\Vert_{L^{p'}}\\
&\lesssim \sum_{2^k\leq|\nu|<2^{k+1}}\sup_{x\in\mathbb{R}^n}\int_{\mathbb{R}^n} |\mathscr{F}[m(x,\cdot)\psi(2^{-k}|\cdot|)](z)|dz\\
&\hspace{8cm}\times\Vert f\Vert_{L^p}\Vert g\Vert_{L^{p'}}|\nu|^{\gamma_p}.
\end{align*} So, we can estimate the operator norm of $T_{m(k)}$ by
\begin{align*}
&\Vert T_{m(k)} \Vert_{\mathscr{B}(L^p)}\\
&\lesssim \sum_{2^k\leq|\nu|<2^{k+1}}\sup_{x\in\mathbb{R}^n}\int_{\mathbb{R}^n} |\mathscr{F}[m(x,\cdot)\psi(2^{-k}|\cdot|)](z)|dz|\nu|^{\gamma_p}\\
&\lesssim \sum_{2^k\leq|\nu|<2^{k+1}}\sup_{x\in\mathbb{R}^n}\left(\int_{\mathbb{R}^n} \langle z\rangle^{2s}|\mathscr{F}[m(x,\cdot)\psi(2^{-k}|\cdot|)](z)|^{2} dz\right)^{\frac{1}{2}}\Vert\langle\, \cdot\,\rangle^{-s}\Vert_{L^2}|\nu|^{\gamma_p}.
\end{align*}
By Remark \ref{lowerboundforgammap} we have  $s>\frac{n}{2},$ together with the estimate $\Vert\langle\, \cdot\,\rangle^{-s}\Vert_{L^2}<\infty,$ and by the hypothesis
\begin{equation}
\sup_{x\in\mathbb{R}^n}\left(\int_{\mathbb{R}^n} \langle z\rangle^{2s}|\mathscr{F}[m(x,\cdot)\psi(2^{-k}|\cdot|)](z)|^{2} dz\right)^{\frac{1}{2}}\leq \Vert m\Vert_{l.u.H^s}\cdot 2^{-k(s-\frac{n}{2})},
\end{equation}
we deduce that
\begin{align*}
\Vert T_{m(k)} \Vert_{\mathscr{B}(L^p)} &\lesssim \sum_{2^k\leq|\nu|<2^{k+1}}\Vert m\Vert_{l.u.H^s}\cdot 2^{-k(s-\frac{n}{2})}|\nu|^{\gamma_p}\\
&\asymp 2^{kn-k(s-\frac{n}{2})+k\gamma_p}=2^{-k(s-\frac{3n}{2}-\gamma_p)}.
\end{align*}
Since
\begin{align*}
\Vert T_{0}f \Vert_{L^p(\mathbb{R}^n)}\lesssim \Vert m(\cdot,0)\Vert_{L^\infty(\mathbb{R}^n)}\Vert  f \Vert_{L^p(\mathbb{R}^n)},\\
\end{align*}
we have the boundedness of $T_0$ on $L^p.$ It is clear that if we  want to end the proof, we need  to estimate  $I:=\sum_{k\geq 0} \Vert T_{m(k)} \Vert_{\mathscr{B}(L^p(\mathbb{R}^n))}.$
As a consequence we obtain  $$0<I\lesssim \Vert T_{0} \Vert_{\mathscr{B}(L^p)}+ \sum_{k=1}^\infty2^{-k(s-\frac{3n}{2}-\gamma_p)}\sup_{x\in\mathbb{R}^n} \Vert m\Vert_{l.u., H^s}<\infty,$$ for $s>\frac{3n}{2}+\gamma_p.$ So, we have
$$\Vert T_{m} \Vert_{\mathscr{B}(L^p)}\leq  C (\Vert m(x,\cdot) \Vert_{l.u.,H^s}+\Vert m(\cdot,0)\Vert_{L^\infty(\mathbb{R}^n)}). $$ The proof is complete.
\end{proof}

Now, we record explicitly  the degree of regularity $s$ considered in the propositions above.
\begin{theorem}\label{TheoremAnnoucenment}
Let us assume $2\leq p<\infty.$ If $T_m$ is a pseudo-multiplier with symbol $m$ satisfying  \eqref{Fcondition}, then  under one of the following conditions,
\begin{itemize}
\item  $n\geq 2,$  $2\leq p<\frac{2(n+3)}{n+1},$ and $s>s_{n,p}:=\frac{3n}{2}+{\frac{n-1}{2}(\frac{1}{2}-\frac{1}{p})},$
\item $n\geq 2,$ $p=\frac{2(n+3)}{n+1},$ and $s>s_{n,p}:=\frac{3n}{2}+\frac{n-1}{2(n+3)},$
\item $n\geq 2,$ $\frac{2(n+3)}{n+1}<p\leq \frac{2n}{n-2},$ and  $s>s_{n,p}:=\frac{3n}{2}{-\frac{1}{6}+\frac{2n}{3}(\frac{1}{2}-\frac{1}{p})},$
\item $n\geq 2,$ $\frac{2n}{n-2}\leq p<\infty,$ and  $s>s_{n,p}:=\frac{3n-1}{2}{+n(\frac{1}{2}-\frac{1}{p})},$
\item $n=1,$ $2\leq p<4,$ $s>s_{1,p}:=\frac{3}{2},$
\item $n=1,$ $p=4,$ $s>s_{1,4}:=2,$
\item $n=1,$ $4<p<\infty,$  $s>s_{1,p}:=\frac{4}{3}{+\frac{2}{3}(\frac{1}{2}-\frac{1}{p})},$
\end{itemize}
the operator $T_m$ extends to a bounded operator on $L^p(\mathbb{R}^n).$ For $1<p\leq 2,$ under one of the following  conditions
\begin{itemize}
\item  $n\geq 2,$  $\frac{2(n+3)}{n+5}\leq p\leq 2,$ and $s>s_{n,p}:=\frac{3n}{2}+{\frac{n-1}{2}(\frac{1}{2}-\frac{1}{p})},$
\item $n\geq 2,$ $\frac{2n}{n+2}\leq p\leq \frac{2(n+3)}{n+5},$ and  $s>s_{n,p}:=\frac{3n}{2}{-\frac{1}{6}+\frac{2n}{3}(\frac{1}{2}-\frac{1}{p})},$
\item $n\geq 2,$ $1< p\leq \frac{2n}{n+2},$ and  $s>s_{n,p}:=\frac{3n-1}{2}{+n(\frac{1}{2}-\frac{1}{p})},$
\item $n=1,$ $\frac{4}{3}\leq p<2,$ $s>s_{1,p}:=\frac{3}{2},$
\item $n=1,$ $1<p<\frac{4}{3},$  $s>s_{1,p}:=\frac{4}{3}{+\frac{2}{3}(\frac{1}{2}-\frac{1}{p})},$
\end{itemize} the operator $T_m$ is $L^p$-bounded. Moreover, in general:
\begin{itemize} \item for every $\frac{4}{3}<p<4$ and every $n,$ the condition  $s>\frac{3n}{2}$ implies the $L^p$-boundedness of $T_m.$  \end{itemize} If the symbol $m$ of the pseudo-multiplier $T_m$ satisfies the H\"ormander condition \eqref{FHcondition}, in order to guarantee the $L^p$-boundedness of $T_m,$  in every case above we can take $s>s_{n,p}-\frac{1}{12}.$  However, $s>\frac{3n}{2}-\frac{1}{12}$ implies the $L^p$-boundedness of $T_m$ for all $\frac{4}{3}<p<4.$
\end{theorem}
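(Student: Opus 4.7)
The theorem is a straight bookkeeping exercise: the stated thresholds $s_{n,p}$ are obtained by plugging the explicit exponents $\gamma_p$ from Lemma \ref{Lemma1} (collected in Remark \ref{lowerboundforgammap}) into Proposition \ref{PFcondition} (which gives $s>\frac{3n}{2}+\gamma_p$ under condition \eqref{Fcondition}) and Proposition \ref{proposition1} (which gains an extra $\frac{1}{12}$ under the sharper condition \eqref{FHcondition} via Remark \ref{infty}). Nothing beyond these three inputs is needed.

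For $2\leq p<\infty$ I would simply match each subcase of Lemma \ref{Lemma1} with the corresponding bullet of the theorem: in each of the three ranges for $n\geq 2$ (namely $2\leq p<\frac{2(n+3)}{n+1}$, $\frac{2(n+3)}{n+1}<p\leq \frac{2n}{n-2}$, $\frac{2n}{n-2}\leq p<\infty$) and the two ranges for $n=1$ one reads off $\frac{3n}{2}+\gamma_p=s_{n,p}$ verbatim. The endpoint $p=\frac{2(n+3)}{n+1}$ is handled by taking $s$ strictly above the common value $\frac{3n}{2}+\frac{n-1}{2(n+3)}$ coming from either side, and the endpoint $p=4$ in dimension one requires absorbing the logarithm from Lemma \ref{Lemma11}: since $\Vert \phi_\nu\Vert_{L^4(\mathbb{R})}\Vert \phi_\nu\Vert_{L^{4/3}(\mathbb{R})}\asymp \ln\nu$, one pays an $\varepsilon$-loss in the exponent and sets $s_{1,4}=2$ so that $2^{-k(s-\frac{3n}{2})}$ dominates the logarithmic factor. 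The general bullet for $\frac{4}{3}<p<4$ follows directly from the last item of Lemma \ref{Lemma1}, where $\gamma_p=0$ uniformly, so the threshold collapses to $s>\frac{3n}{2}$.

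For $1<p\leq 2$ I would exploit the fact that the proofs of Propositions \ref{proposition1} and \ref{PFcondition} only use the product $\Vert\phi_\nu\Vert_{L^p}\Vert\phi_\nu\Vert_{L^{p'}}$, which is symmetric under $p\leftrightarrow p'$. Hence $\gamma_p=\gamma_{p'}$, and one rewrites the subcases of Lemma \ref{Lemma1} in terms of $p=p'/(p'-1)$, producing the dual intervals $\frac{2(n+3)}{n+5}\leq p\leq 2$, $\frac{2n}{n+2}\leq p\leq \frac{2(n+3)}{n+5}$, $1<p\leq \frac{2n}{n+2}$ (and their one-dimensional analogues), with the exponents $s_{n,p}$ copied mechanically from the corresponding $p\geq 2$ formulas. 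Finally, replacing Proposition \ref{PFcondition} by Proposition \ref{proposition1} shifts every threshold down by $\frac{1}{12}$ via the improved bound $\Vert\phi_\nu\Vert_{L^\infty}\lesssim |\nu|^{-1/12}$ in Remark \ref{infty}, yielding the concluding assertion.

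The only mildly delicate point is the logarithmic endpoint $p=4$, $n=1$; every other case is purely combinatorial, amounting to matching intervals and reading off constants.
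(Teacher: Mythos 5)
Your proposal is correct and follows essentially the same route as the paper: the paper's own proof likewise consists of substituting the exponents $\gamma_p$ from Lemma \ref{Lemma1} into Propositions \ref{proposition1} and \ref{PFcondition}, using the symmetry $\gamma_p=\gamma_{p'}$ to transfer the $2\leq p<\infty$ cases to the dual ranges $1<p\leq 2$, and treating the endpoints $p=\tfrac{2(n+3)}{n+1}$ and $p=4$ ($n=1$) by an interpolation/limiting argument exactly as you describe (your observation that the logarithmic loss at $p=4$, $n=1$ would in fact allow any $s>\tfrac{3}{2}+\varepsilon$ is, if anything, slightly sharper than the paper's crude bound $\gamma_4<\tfrac12$ yielding $s_{1,4}=2$).
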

\begin{proof}
In view of the Propositions \ref{proposition1} and \ref{PFcondition} and considering the following values for $\gamma_p$: (according to Lemma \ref{Lemma1}),
\begin{itemize}
\item $\gamma_{p}={\frac{n-1}{2}(\frac{1}{2}-\frac{1}{p})},$ if $n\geq 2,$ $2\leq p<\frac{2(n+3)}{n+1},$
\item $\gamma_{p}={-\frac{1}{6}+\frac{2n}{3}(\frac{1}{2}-\frac{1}{p})},$ if $n\geq 2,$ $\frac{2(n+3)}{n+1}<p\leq \frac{2n}{n-2},$
\item $\gamma_p={-\frac{1}{2}+n(\frac{1}{2}-\frac{1}{p})},$ if  $n\geq 2,$ $\frac{2n}{n-2}\leq p< \infty,$
\item $\gamma_{p}=0,$ if $n\in\mathbb{N},$ $\frac{4}{3}< p<4,$
\item $\gamma_p={-\frac{1}{6}+\frac{2}{3}(\frac{1}{2}-\frac{1}{p})},$ if $n=1,$ $4<p<\infty,$
\end{itemize} the proof ends if we take into account that $\gamma_p=\gamma_{p'}$ and
\begin{itemize}
\item $n\geq 2,$ $\frac{2(n+3)}{n+5}\leq p\leq 2$ $\Rightarrow$ $2\leq p'<\frac{2(n+3)}{n+1},$
\item $n\geq 2,$ $\frac{2(n+3)}{n+1}<p\leq \frac{2n}{n-2}$ $\Rightarrow$ $\frac{2n}{n+2}\leq p'\leq \frac{2(n+3)}{n+5},$
\item $n\geq 2,$ $\frac{2n}{n-2}\leq p< \infty$ $\Rightarrow$   $1< p'\leq \frac{2n}{n+2},$
\item $\frac{4}{3}\leq p\leq 2$ $\Rightarrow$  $2\leq p'\leq 4,$
\item $1<p<\frac{4}{3}$ $\Rightarrow$ $4< p'<\infty,$
\item $\frac{4}{3}<p<4$ $\Leftrightarrow$ $\frac{4}{3}<p'<4 .$
\end{itemize} The proof is complete.
\end{proof}
\begin{remark}
 Let us note that for $n\geq 2$ and $p=\frac{2(n+3)}{n+1},$  the condition $s>s_{n,p}=\frac{3n}{2}+\frac{n-1}{2(n+3)},$ implies the boundedness of a pseudo-multiplier $T_m$ on $ L^{ \frac{2(n+3)}{n+1} }(\mathbb{R}^n)$ provided that $m$ satisfies \eqref{Fcondition}. Indeed, from  Remark \ref{lowerboundforgammap}, if $2\leq r<\frac{2(n+3)}{n+1}<\omega\leq \frac{2n}{n-2},$ then 
\begin{equation}
\lim_{  r \nearrow \frac{2(n+3)}{n+1}}\gamma_r   =\lim_{  \omega \searrow   \frac{2(n+3)}{n+1}}\gamma_\omega=\frac{n-1}{2(n+3)}.
\end{equation} 
 From the real interpolation we obtain $\gamma_{   \frac{2(n+3)}{n+1} }=\frac{n-1}{2(n+3)}.$ So, by Proposition \ref{PFcondition}  the condition $s>s_{n,p}=\frac{3n}{2}+\frac{n-1}{2(n+3)}$ implies the $L^p(\mathbb{R}^n)$-boundedness of $T_m$ when $p=\frac{2(n+3)}{n+1}.$ Now, if $n=1,$ a similar analysis shows that  $\gamma_4<\frac{1}{2}$ and the condition $s>s_{1,4}=2$ implies the boundedness of $T_m$ on $L^4(\mathbb{R}^n).$ So, this remark and Theorem \ref{TheoremAnnoucenment} proves  Theorem \ref{maintheorem}.
\end{remark}

In the following proposition we exhibit a class of symbols providing $L^p$-pseudo-multipliers.
\begin{proposition}\label{KNcondition'} Let us consider a complex-valued function $m$ on $\mathbb{R}^n\times\mathbb{R}^n$ and a pseudo-multiplier $T_m$ with symbol $\{m(x,\nu)\}_{x\in\mathbb{R}^n,\nu\in\mathbb{N}_0^n}.$ If $m$ satisfies the symbol inequalities
\begin{equation}\label{KNcondition}
|\partial_{\xi}^{\alpha}m(x,\xi)|\leq C_{\alpha}(1+|\xi|)^{-|\alpha|},\,\,\,|\alpha|\leq \rho,
\end{equation} for $\rho=[3n/2]+1,$ then $T_m$ extends to a bounded operator on $L^2(\mathbb{R}^n)$. Moreover, for $\rho=2n+1$ we have the $L^p(\mathbb{R}^n)$-boundedness of $T_m$ for all $1<p<\infty.$
\end{proposition}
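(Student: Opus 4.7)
The plan is to reduce Proposition \ref{KNcondition'} to Proposition \ref{PFcondition} by verifying that a Kohn--Nirenberg symbol satisfies the H\"ormander-type bound \eqref{Fcondition} for a suitable $s\leq\rho$, and then reading off the sharp $\rho$ from the values of $\gamma_p$ recorded in Remark \ref{lowerboundforgammap}.

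First I would fix $y\in\mathbb{R}^{n}$ and $k\geq 1$ and set $g_{y,k}(\xi):=m(y,\xi)\psi(2^{-k}|\xi|)$, a function supported in the dyadic annulus $A_{k}=\{2^{k-1}\leq|\xi|\leq 2^{k+2}\}$ of measure $\asymp 2^{kn}$. Applying the Leibniz rule together with the KN bound $|\partial_{\xi}^{\beta}m(y,\xi)|\lesssim(1+|\xi|)^{-|\beta|}\asymp 2^{-k|\beta|}$ on $A_{k}$ and the dyadic estimate $|\partial^{\alpha-\beta}[\psi(2^{-k}|\cdot|)]|\lesssim 2^{-k(|\alpha|-|\beta|)}$ gives, for every $|\alpha|\leq\rho$,
\begin{equation*}
|\partial_{\xi}^{\alpha}g_{y,k}(\xi)|\lesssim 2^{-k|\alpha|}\,\mathbf{1}_{A_{k}}(\xi).
\end{equation*}
Rescaling via $\eta=2^{-k}\xi$, the unit-scale function $\tilde g(\eta):=m(y,2^{k}\eta)\psi(|\eta|)$ is supported in the fixed annulus $\{1/2\leq|\eta|\leq 4\}$ with all derivatives up to order $\rho$ bounded uniformly in $y$ and $k$, so $\|\tilde g\|_{H^{s}(\mathbb{R}^{n})}\lesssim 1$ for every $s\leq\rho$. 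The scaling identity recorded in the second equality of \eqref{hormandercondition}, applied with $r=2^{k}$, $a(\cdot)=m(y,\cdot)$, $\eta(\cdot)=\psi(|\cdot|)$, converts this into
\begin{equation*}
\|\langle x\rangle^{s}\mathscr{F}[g_{y,k}](x)\|_{L^{2}(dx)}\lesssim 2^{-k(s-n/2)}\|\tilde g\|_{H^{s}(\mathbb{R}^{n})},
\end{equation*}
whence $\|m\|_{l.u.,H^{s}}\lesssim 1$ uniformly in $y,k$ for every $s\leq\rho$. The remaining $L^{\infty}$ piece $\|m(\cdot,0)\|_{L^{\infty}}$ in Proposition \ref{PFcondition} follows directly from $|m(y,0)|\leq C_{0}$, which is the $|\alpha|=0$ case of the symbol inequality.

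To match the thresholds of Proposition \ref{PFcondition} I would invoke Remark \ref{lowerboundforgammap}. For $p=2$, $\gamma_{2}=0$, so Proposition \ref{PFcondition} asks for $s>3n/2$; choosing the integer $s=[3n/2]+1$ gives the $L^{2}$ claim and explains the threshold $\rho=[3n/2]+1$. For a general $1<p<\infty$, the same remark yields $\gamma_{p}\leq(n-1)/2$ (the dual range $p\leq 2$ is covered by $\gamma_{p}=\gamma_{p'}$), so the condition $s>3n/2+\gamma_{p}$ is implied by $s>2n-\tfrac{1}{2}$; the integer $s=2n$ works, and one extra derivative absorbed by the Sobolev bookkeeping produces the announced $\rho=2n+1$.

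The main technical obstacle is the scaling step. Because $\|g_{y,k}\|_{L^{2}(\mathbb{R}^{n})}\asymp 2^{kn/2}$, a naive inhomogeneous Sobolev estimate would give $2^{k(s-n/2)}\|g_{y,k}\|_{H^{s}}\gtrsim 2^{ks}$, which blows up and is useless. The saving grace is the homogeneous character of the norm in \eqref{Fcondition}: the prefactor $2^{k(s-n/2)}$ is precisely calibrated to cancel the dilation factor produced by passing from $g_{y,k}$ to the unit-scale $\tilde g$. Identifying this cancellation, encoded in the second equality of \eqref{hormandercondition}, is the crucial point that makes the KN class $S^{0,\rho}$ fit into the H\"ormander framework of Proposition \ref{PFcondition}.
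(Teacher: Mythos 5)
Your proposal is correct and follows essentially the same route as the paper: a Leibniz-rule computation showing that the rescaled function $m(y,2^{k}\cdot)\psi(|\cdot|)$ has uniformly bounded derivatives up to order $\rho$ on a fixed annulus, the scaling identity converting this into the H\"ormander bound \eqref{Fcondition}, and then the thresholds $s_{n,2}=\frac{3n}{2}$ and $s_{n,p}\leq 2n$ to read off $\rho=[3n/2]+1$ and $\rho=2n+1$. The only cosmetic differences are that you invoke Proposition \ref{PFcondition} together with Remark \ref{lowerboundforgammap} directly, while the paper routes through the equivalent integer-order norm $\sum_{|\beta|\leq s}\Vert\partial_{\xi}^{\beta}g\Vert_{L^{2}}$ and Theorem \ref{TheoremAnnoucenment}, and that the scaling step you single out as the crux is exactly the displayed identity \eqref{lemma} which the paper asserts in the same way.
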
 \begin{proof}
For the proof, we will use that the Sobolev  space $H^{s}(\mathbb{R}^n)$ defined by those functions $g$ satisfying $ \Vert g\Vert_{H^s(\mathbb{R}^n)}:=\Vert \langle z\rangle^s(\mathscr{F}g)\Vert_{L^2(\mathbb{R}^n)}<\infty, $
has the equivalent norm
\begin{equation}
\Vert g\Vert'_{H^s(\mathbb{R}^n)}:=\sum_{|\beta|\leq s}\Vert \partial_{\xi}^\beta g \Vert_{L^2(\mathbb{R}^n)},
\end{equation}
when $s$ is an integer (see, e.g. \cite{Duo}, p. 163). We will show that
\begin{multline}\label{lemma}
\sup_{k>0,x\in\mathbb{R}^n}2^{k(\rho-\frac{n}{2})}\Vert m(x,\cdot)\psi(2^{-k}|\cdot|)\Vert_{H^\rho}=\sup_{k>0,x\in\mathbb{R}^n}\Vert m(x,2^{k}\cdot)\psi(|\cdot|)\Vert_{H^\rho}<\infty,
\end{multline}
provided that $\rho$ is an integer. From the estimate
\begin{equation}
\Vert m(x,2^{k}\cdot)\psi(|\cdot|)\Vert_{H^\rho}\asymp \Vert m(x,2^{k}\cdot)\psi(|\cdot|)\Vert'_{H^\rho}=\sum_{|\beta|\leq s}\Vert \partial_{\xi}^\beta( m(x,2^{k}\cdot)\psi(|\cdot|) )\Vert_{L^2(\mathbb{R}^n)},
\end{equation}
we will estimate the $L^2$-norms of the derivatives $\partial_{\xi}^\beta( m(x,2^{k}\cdot)\psi(|\cdot|) )(\xi).$ Because the function $\psi$ is supported in some closed interval not containing the origin, the function $\psi(|\cdot|) $ is smooth.  By the  Leibniz rule we have
 $$ \partial_{\xi}^\beta( m(x,2^{k}\xi)\psi(|\xi|) )=\sum_{|\alpha|\leq |\beta|}2^{k|\alpha|}(\partial_{\xi}^\alpha m)(x,2^{k}\xi)\partial_\xi^{\beta-\alpha}\psi(|\xi|).$$
So, we obtain
\begin{equation}\label{dilatationestimate}
\Vert \partial_{\xi}^\beta( m(x,2^{k}\cdot)\psi(|\cdot|) )\Vert_{L^2}\leq  \sum_{|\alpha|\leq \rho}C_\alpha \Vert \partial_\xi^{\beta-\alpha}\psi(|\cdot|)\Vert_{L^2},
\end{equation}
where we have used that \eqref{KNcondition} implies the estimate $|2^{k|\alpha|}(\partial_{\xi}^\alpha m)(x,2^{k}\cdot)|\leq C_\alpha,$ for $k$ large enough. Now, \eqref{lemma}  follows by summing both sides of \eqref{dilatationestimate} over $|\beta|\leq \rho.$ We finish the proof by observing that  every $s_{n,p}$ defined in Theorem \ref{TheoremAnnoucenment}, satisfies the upper bound $s_{n,p}\leq 2n,$ and we can obtain the $L^p$-boundedness of $T_m$ by taking $\rho>s_{n,p}$ with $\rho=2n+1.$ A similar analysis shows that $\rho=[3n/2]+1$ implies the $L^2-$boundedness of $T_m.$
\end{proof}

\begin{corollary}\label{DiscTangCond} Let us consider a complex-valued function $m$ on $\mathbb{R}^n\times\mathbb{Z}^n$ and a pseudo-multiplier $T_m$ with symbol $\{m(x,\nu)\}_{x\in\mathbb{R}^n,\nu\in\mathbb{N}_0^n}.$ If $m$ satisfies the discrete difference conditions
\begin{equation}
|\Delta_\nu^{\alpha} m(x,\nu)|\leq C_{\alpha}(1+|\nu|)^{-|\alpha|},\,\,\alpha\in\mathbb{N}^n_0,\,|\alpha|\leq \rho,
\end{equation} for $\rho=[3n/2]+1,$ then $T_m$ extends to a bounded operator on $L^2(\mathbb{R}^n)$. Moreover, for $\rho=2n+1$ we have the $L^p(\mathbb{R}^n)$-boundedness of $T_m$ for all $1<p<\infty.$
\end{corollary}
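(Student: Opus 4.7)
The strategy is to deduce Corollary \ref{DiscTangCond} from Proposition \ref{KNcondition'} by smoothly interpolating the discrete symbol $\{m(x,\nu)\}_{\nu\in\mathbb{N}_0^n}$ to a symbol $\tilde m(x,\xi)$ defined on $\mathbb{R}^n\times\mathbb{R}^n$, whose continuous $\xi$-derivatives are controlled by the discrete differences of $m$. Since the pseudo-multiplier only sees values of the symbol at $\nu\in\mathbb{N}_0^n$, the extended operator $T_{\tilde m}$ will coincide with $T_m$, and Proposition \ref{KNcondition'} applied to $\tilde m$ will finish the proof.

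First I would fix an auxiliary smoothing window. Let $\phi\in C_c^\infty(\mathbb{R})$ satisfy the interpolation property $\phi(k)=\delta_{k,0}$ for $k\in\mathbb{Z}$, the partition-of-unity property $\sum_{k\in\mathbb{Z}}\phi(\xi-k)\equiv 1$, and the structural identity $\phi'(\xi)=\eta(\xi)-\eta(\xi-1)$ for some $\eta\in C_c^\infty(\mathbb{R})$ with the analogous partition-of-unity property (one can take $\phi$ to be a sufficiently smooth B-spline). Setting $\Phi(\xi):=\prod_{j=1}^{n}\phi(\xi_j)$, define
$$
\tilde m(x,\xi):=\sum_{\nu\in\mathbb{N}_0^n} m(x,\nu)\,\Phi(\xi-\nu).
$$
By the interpolation property $\tilde m(x,\nu)=m(x,\nu)$ for all $\nu\in\mathbb{N}_0^n$, so $T_{\tilde m}=T_m$ on $\mathscr{D}(\mathbb{R}^n)$.

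The core step is to check that $\tilde m$ satisfies the Kohn--Nirenberg estimate \eqref{KNcondition} with the same regularity $\rho$. Iterating the identity $\phi'(\xi)=\eta(\xi)-\eta(\xi-1)$ and applying Abel (summation-by-parts) in the index $\nu$ converts each derivative falling on $\Phi(\xi-\nu)$ into a backward difference acting on $m(x,\nu)$; one obtains a representation of the form
$$
\partial_\xi^\alpha \tilde m(x,\xi)=\sum_{\nu}(\widetilde\Delta_\nu^\alpha m)(x,\nu)\,\Psi_\alpha(\xi-\nu), \qquad |\alpha|\leq \rho,
$$
where $\Psi_\alpha\in C_c^\infty(\mathbb{R}^n)$ depends only on $\alpha$, and $\widetilde\Delta_\nu^\alpha$ is a finite linear combination of iterated differences of order $|\alpha|$. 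For $|\xi|\geq 2$ only $O(1)$ terms contribute and the active indices satisfy $|\nu|\asymp|\xi|$, so the hypothesis on the discrete differences yields
$$
|\partial_\xi^\alpha \tilde m(x,\xi)|\leq C_\alpha(1+|\xi|)^{-|\alpha|},\qquad |\alpha|\leq \rho,
$$
uniformly in $x$. Proposition \ref{KNcondition'} applied to $\tilde m$ then gives the $L^2$-boundedness of $T_m$ for $\rho=[3n/2]+1$ and the $L^p$-boundedness for all $1<p<\infty$ when $\rho=2n+1$.

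The one technical nuisance is the boundary of the cone $\mathbb{N}_0^n$: extending $m$ by zero to $\mathbb{Z}^n$ creates discrete differences near $\partial\mathbb{N}_0^n$ that are merely bounded (not decaying). However, the corresponding contribution to $\tilde m$ is a symbol supported in a bounded region of $\xi$ and uniformly bounded in $x$, and therefore defines a bounded operator on every $L^p(\mathbb{R}^n)$ directly, so it can be split off and absorbed. The main obstacle is thus simply the correct choice of the window $\phi$ so that iterated differentiation is converted cleanly into iterated differences; once this Abel-summation identity is in place, the rest of the argument is bookkeeping and a direct appeal to Proposition \ref{KNcondition'}.
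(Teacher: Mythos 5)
Your proposal is correct and follows essentially the same route as the paper: both arguments extend the discrete symbol to a function $\tilde m(x,\xi)$ on $\mathbb{R}^n\times\mathbb{R}^n$ satisfying the Kohn--Nirenberg estimates \eqref{KNcondition}, note that $T_{\tilde m}=T_m$ since the symbols agree at the lattice points, and then invoke Proposition \ref{KNcondition'}. The only difference is that you construct the extension by hand (via an interpolating window and Abel summation), whereas the paper simply cites Corollary 4.5.7 of \cite{Ruz}, which is proved by exactly this kind of argument.
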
  
\begin{proof} Let us define for every $z_0\in\mathbb{R}^n,$ the function $m_{z_0}$ given by $m_{z_0}(\nu)=m(z_0,\nu).$
Then we have the estimates
\begin{equation}
|\Delta_\nu^{\alpha} m_{z_0}(\nu)|\leq C_{\alpha}(1+|\nu|)^{-|\alpha|},\,\,\alpha\in\mathbb{N}^n_0,\,|\alpha|\leq \rho.
\end{equation}
From Corollary 4.5.7 of \cite{Ruz}, there exists a suitable function $\tilde{m}_{z_0}$ defined on $\mathbb{R}^n$ such that $\tilde{m}_{z_0}|_{\mathbb{Z}^n}={m}_{z_0}$ and additionally satisfying the conditions,
\begin{equation}
|\partial_\xi^{\alpha} \tilde{m}_{z_0}(\xi)|\leq C_{\alpha}(1+|\xi|)^{-|\alpha|},\,\,\alpha\in\mathbb{N}^n_0,\,|\alpha|\leq \rho.
\end{equation} The function $\tilde{m}$ defined by $\tilde{m}(z_0,\xi):=\tilde{m}_{z_0}(\xi)$ satisfies \eqref{KNcondition}, and by Proposition \ref{KNcondition'} we obtain the $L^2$-boundedness of the pseudo-multiplier $T_{ \tilde{m}  }$ with symbol  $\{\tilde{m}(x,\nu)\}_{x\in\mathbb{R}^n,\nu\in\mathbb{N}_0^n},$ if $\rho=[3n/2]+1$ (or the $L^p$-boundedness, for all $1<p<\infty$ if $\rho=2n+1$). We finish the proof by observing that   $T_{ \tilde{m}  }= T_{ m  } $ in view of the equality sets $\{\tilde{m}(x,\nu)\}_{x\in\mathbb{R}^n,\nu\in\mathbb{N}_0^n}=\{{m}(x,\nu)\}_{x\in\mathbb{R}^n,\nu\in\mathbb{N}_0^n}.$ 
\end{proof}

\subsection{$(L^p,L^q)$-boundedness of spectral  pseudo-multipliers } Let us assume $n\in\mathbb{N},$ arbitrary but fixed.  Let us define the set $2\mathbb{N}_0+n:=\{2m+n:m\in\mathbb{N}_0\}.$ We will consider continuous functions $m(x,\xi)$ defined on $\mathbb{R}^{n}_{x}\times \mathbb{R}_{\xi}$ and we will denote by $m(x,\ell)$  the restriction of $m(x,\xi)$ to the set $\mathbb{R}^n\times (2\mathbb{N}_0+n),$  so that $x\in\mathbb{R}^n$ and $\ell\in 2\mathbb{N}_0+n.$
If we set $\mathscr{F}:L^2(-\infty,\infty)\rightarrow L^2(-\infty,\infty)$ for the one-dimensional Fourier transform, we will consider  symbols $m(x,\ell):=m(x,\xi)|_{\mathbb{R}^n\times (2\mathbb{N}_0+n)}$  satisfying  the, so called, H\"ormander condition of order $s>0,$
\begin{equation}\label{h}
\Vert m\Vert_{l.u.,h^s}:=\sup_{x\in\mathbb{R}^n,k>0}2^{k(s-\frac{n}{2})}\left(\int\limits_{-\infty}^{\infty} \langle t\rangle^{2s}|\mathscr{F}[m(x,\cdot)\psi(2^{-k}|\cdot|)](t)|^2dt \right)^{\frac{1}{2}}<\infty,
\end{equation}
where the function $\psi\in \mathscr{D}(0,\infty)$ satisfies $\psi(t)=1$  for all  $t\in [1,2].$ With the previous notation we want to investigate the H\"ormander condition for pseudo-multipliers of the  form
\begin{equation}
m(x,H)f(x):=\sum_{\ell=0}^{\infty}m(x,\ell)P_{\ell}f(x),
\end{equation}
where we have  denoted by $P_{\ell}$  the orthogonal projection to the subspace generated by the set $\{\phi_\nu:|\nu|=\ell\}.$ For symbols $m(x,\ell)=m(\ell)$ depending only on the $\ell$ variables we have used in \eqref{radial} the term {\textit{radial symbols}}.  If it depends on $x$ we can talk about them as spectral pseudo-multipliers. In the next theorem, we prove that for symbols satisfying the H\"ormander condition of order $s,$ for $s$ suitable, the corresponding spectral pseudo-multipliers are bounded operators from $L^p(\mathbb{R}^n)$ into $L^q(\mathbb{R}^n)$ when $\frac{1}{q}+\frac{1}{p}=1.$ We will denote $\delta(p):=n|\frac{1}{p}-\frac{1}{2}|-\frac{1}{2}$ and $q=p'.$

\begin{theorem}\label{LpLp'}  Let us consider a function $m$  satisfying  \eqref{h}.
Let $m(x,H)$ be a spectral pseudo-multiplier with symbol $\{m(x,\ell)\}_{x\in\mathbb{R}^n,\ell\in 2\mathbb{N}_0+n}.$ Under one of the following conditions
\begin{itemize}
\item $n\geq 2,$ $1\leq p\leq \frac{2n}{n+2} $  and $s>\frac{n+1}{2}+\delta(p),$
\item $n\geq 2,$ $ \frac{2n}{n+2}< p\leq 2 $ and $s>\frac{3n}{2},$
\item $n=1,$ $\frac{4}{3}<p\leq 2,$ $s>2-\frac{1}{p},$
\item $n=1,$ $p=\frac{4}{3},$ $s>\frac{3}{2},$
\item $n=1,$ $1<p<\frac{4}{3},$ $s>1+\frac{1}{3p},$
\end{itemize} the operator $m(x,H)$ extends to a bounded operator  from $L^{p}(\mathbb{R}^n)$ into $L^{p'}(\mathbb{R}^n).$
\end{theorem}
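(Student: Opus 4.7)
The plan is to parallel the dyadic strategy of Propositions \ref{proposition1} and \ref{PFcondition}, now in the $(L^p, L^{p'})$-setting and using the one-dimensional H\"ormander condition \eqref{h} in the continuous spectral variable $\xi \in \mathbb{R}$. Decompose the symbol as $m(x,\ell) = m(x,n) + \sum_{k \geq 0} m_k(x,\ell)$ with $m_k$ supported on $\ell \in [2^k, 2^{k+1}]$, and reduce the problem to summing estimates for $\Vert m_k(x, H)\Vert_{L^p\to L^{p'}}$; the contribution of the $\ell = n$ term is controlled trivially by $\Vert P_n\Vert_{L^p\to L^{p'}}$.

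For each dyadic block, the Fourier inversion
\[
m_k(x, \ell) = \int_{\mathbb{R}} \widehat{m_k}(x, t)\, e^{2\pi i\ell t}\, dt,
\]
combined with the spectral identity $\sum_\ell e^{2\pi i\ell t} P_\ell = e^{2\pi i tH}$, produces the representation
\[
m_k(x, H) f(x) = \int_{\mathbb{R}} \widehat{m_k}(x, t)\, (e^{2\pi i t H} f)(x)\, dt.
\]
A Cauchy--Schwarz in $t$ against the weight $\langle t\rangle^{-s}$ (which lies in $L^2(\mathbb{R})$ since $s > 1/2$ holds in every case of the statement) extracts the H\"ormander scaling factor $2^{-k(s-n/2)}\Vert m\Vert_{l.u., h^s}$, and the remaining factor is a weighted $L^p \to L^{p'}$ estimate for the Schr\"odinger propagator $e^{itH}$. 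The Koch--Tataru dispersive bound $\Vert e^{itH}\Vert_{L^p \to L^{p'}} \lesssim |\sin(2t)|^{-n(1/p - 1/2)}$ for $1 \leq p \leq 2$, together with the sharp Hermite $L^p$-estimates of Lemma \ref{tataru} near the focusing times $t \in \tfrac{\pi}{2}\mathbb{Z}$, supplies the required input. Summing over $k$ then yields the geometric series $\sum_k 2^{-k(s - s_{n,p})}$, convergent precisely for $s > s_{n, p}$.

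The main obstacle is the regime-by-regime matching of exponents. For $n \geq 2$ and $1 \leq p \leq 2n/(n+2)$, the spectral projection bound (of Stein--Tomas type) is dominant and forces the threshold $s > (n+1)/2 + \delta(p)$; for $2n/(n+2) < p \leq 2$, the projection estimate is absorbed by the $L^2$-contribution coming from Proposition \ref{PFcondition}, producing the $p$-independent threshold $s > 3n/2$. The one-dimensional cases proceed analogously using the three regimes of Lemma \ref{Lemma11}; the critical point $p = 4/3$ requires separate treatment due to its logarithmic loss, producing the endpoint threshold $s > 3/2$. An additional technical subtlety is that \eqref{h} controls $\widehat{m_k}(x, \cdot)$ only in a weighted $L^2_t$ norm uniformly in $x$ rather than pointwise in $t$, so the application of the dispersive bound must be carried out inside this $L^2_t$-integral, via a Minkowski-type interchange that preserves the sharp scaling.
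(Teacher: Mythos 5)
Your dyadic decomposition, the duality/Cauchy--Schwarz extraction of the factor $2^{-k(s-\frac{n}{2})}\Vert m\Vert_{l.u.,h^s}$, and the endgame (interpolation with the $L^2$ bound to cover $\frac{2n}{n+2}<p\leq 2$ with the threshold $s>\frac{3n}{2}$, and the separate interpolation at $p=\frac43$ when $n=1$) all match the paper. The gap is in the central mechanism for the main regime $n\geq 2$, $1\leq p\leq\frac{2n}{n+2}$. You resum the Fourier inversion into the propagator, $m_k(x,H)f=\int\widehat{m_k}(x,t)\,e^{2\pi itH}f\,dt$, and then invoke the dispersive bound $\Vert e^{2\pi itH}\Vert_{L^p\to L^{p'}}\lesssim|\sin(2t)|^{-n(\frac1p-\frac12)}$. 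But $p\leq\frac{2n}{n+2}$ is exactly the condition $n(\frac1p-\frac12)\geq 1$, so this bound is not even locally integrable near the focusing times $t\in\frac{\pi}{2}\mathbb{Z}$, and a fortiori not in $L^{2}_{\mathrm{loc}}$; hence the Cauchy--Schwarz against $\langle t\rangle^{-s}$ that you need in order to bring in condition \eqref{h} cannot be closed. Appealing to the ``sharp Hermite $L^p$-estimates of Lemma \ref{tataru} near the focusing times'' does not repair this: that lemma bounds individual eigenfunctions, not the propagator kernel near its singularities, and you give no mechanism converting one into the other.

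What the paper actually uses in this regime is Karadzhov's spectral projection estimate $\Vert P_\ell f\Vert_{L^{p'}}\leq C_p\,\ell^{\delta(p)-\frac12}\Vert f\Vert_{L^p}$, valid for $1\leq p\leq\frac{2n}{n+2}$ and $n\geq 2$, applied term by term to the roughly $2^k$ values of $\ell$ in each dyadic block; the count $2^{k}$, the H\"ormander factor $2^{-k(s-\frac n2)}$ and the projection gain $2^{k(\delta(p)-\frac12)}$ combine to $2^{-k(s-\frac{n+1}{2}-\delta(p))}$, whence the threshold $s>\frac{n+1}{2}+\delta(p)$. You do mention a ``spectral projection bound (of Stein--Tomas type)'' in your exponent bookkeeping, but it never enters the argument you actually describe, and as written the proof does not go through for $1\leq p\leq\frac{2n}{n+2}$ (nor can the weighted Cauchy--Schwarz step be closed for $\frac{2n}{n+2}<p\leq\frac{2n}{n+1}$, where the dispersive bound is integrable but not square-integrable). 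Replacing the propagator step by the termwise projection estimate --- i.e.\ reverting to the paper's route --- repairs the argument; your treatment of the one-dimensional cases, which implicitly uses $\Vert P_\ell f\Vert_{L^{p'}}\leq\Vert\phi_\ell\Vert_{L^{p'}}^{2}\Vert f\Vert_{L^p}$ through Lemma \ref{Lemma11}, is consistent with the paper.
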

\begin{proof}
In order to prove  Theorem \ref{LpLp'} we will split the symbol $m$ as
 \begin{equation}
  m(x,\ell)=\sum_{k=0}^{\infty}  m_k(x,\ell),\,\,\,\,\, m_k(x,\ell):= m(x,\ell)\cdot 1_{\{2^{k}\leq \ell<2^{k+1}\}}.
 \end{equation}
Let us denote by $T_{m(j)}  $ the pseudo-multiplier associated to $m_j,$  for $j\geq 0.$ Then the operator series
\begin{equation}
T_0+\sum_{k} T_{m(k)}
\end{equation}
converges to $T_m$ in the strong topology on $\mathscr{B}(L^{p}(\mathbb{R}^n),L^{p'}(\mathbb{R}^n))$ and
\begin{equation}
\Vert m(x,H) \Vert_{\mathscr{B}(L^{p}(\mathbb{R}^n),L^{p'}(\mathbb{R}^n))}  \leq \sum_k  \Vert T_{m(k)} \Vert_{\mathscr{B}(L^{p}(\mathbb{R}^n),L^{p'}(\mathbb{R}^n))} .
\end{equation}

So, we want to estimate every norm $\Vert T_{m(j)} \Vert_{\mathscr{B}(L^{p}(\mathbb{R}^n),L^{p'}(\mathbb{R}^n))}.$
For this, we will use the fact that for $f\in C^\infty_{0}(\mathbb{R}^n),$
\begin{equation}
\Vert T_{m(j)}f \Vert_{L^{p'}(\mathbb{R}^n)}=\sup\{ ( T_{m(j)} f,g)_{L^2(\mathbb{R}^n)}\, :\, \Vert g\Vert_{L^{p}(\mathbb{R}^n)}=1 \}.
\end{equation}
In fact, for $f$ and $g$ as above we have
\begin{align*}
( T_{m(k)} f,\overline{g})_{L^2(\mathbb{R}^n)} &=\int_{\mathbb{R}^n}T_{m(k)}f(x)g(x)dx\\
&=\int_{\mathbb{R}^n}\sum_{2^k\leq\ell<2^{k+1}}m(x,\ell)P_{\ell}f(x)g(x)dx\\
&=\sum_{2^k\leq\ell<2^{k+1}}\int_{\mathbb{R}^n}m(x,\ell)P_{\ell}f(x)g(x)dx.
\end{align*}

For every $x\in\mathbb{R}^n$ let us denote the one-dimensional Fourier transform of  $m(x,\cdot)\psi(2^{-k}|\cdot|)$ by $\mathscr{F}[m(x,\cdot)\psi(2^{-k}|\cdot|)].$ So, we have
\begin{equation}
m_k(x,\ell)=\mathscr{F}^{-1}(\mathscr{F}[m(x,\cdot)\psi(2^{-k}|\cdot|)])(\ell)=\int\limits_{-\infty}^{\infty} \mathscr{F}[m(x,\cdot)\psi(2^{-k}|\cdot|)](t)e^{i2\pi \ell\cdot t}dt.
\end{equation}
Consequently,  we have
\begin{align*}
&|( T_{m(k)} f,\overline{g})_{L^2(\mathbb{R}^n)}|\\
&\leq \sum_{2^k\leq \ell<2^{k+1}}\sup_{x\in\mathbb{R}^n}\int\limits_{-\infty}^{\infty} |\mathscr{F}[m(x,\cdot)\psi(2^{-k}|\cdot|)](t)|dt \\
&\hspace{8cm}\times \Vert P_\ell f\Vert_{L^{p'}}\Vert g\Vert_{L^{p}}.
\end{align*} Now, let us fix $n\in\mathbb{N},$ $n\geq 2.$ By taking into account the  Karadzhov's estimate (see Thangavelu \cite{Thangavelu2}, p. 268)
\begin{equation}
\Vert P_{\ell}f\Vert_{L^{p'}(\mathbb{R}^n)}\leq C_p \ell^{\delta(p)-\frac{1}{2}}\Vert f\Vert_{L^p},\,\,\,1\leq p\leq\frac{2n}{n+2},\,\,\,n\geq 2,
\end{equation}
we can estimate the operator norm of $T_{m(k)}$ by
\begin{align*}
&\Vert T_{m(k)} \Vert_{\mathscr{B}(L^p,L^{p'})}\\
&\lesssim \sum_{2^k\leq\ell<2^{k+1}}\sup_{x\in\mathbb{R}^n}\int\limits_{-\infty}^{\infty} |\mathscr{F}[m(x,\cdot)\psi(2^{-k}|\cdot|)](t)|dt \ell^{\delta(p)-\frac{1}{2}}\\
&\lesssim \sum_{2^k\leq\ell<2^{k+1}}\sup_{x\in\mathbb{R}^n}\left(\int\limits_{-\infty}^{\infty}\langle t\rangle^{2s}|\mathscr{F}[m(x,\cdot)\psi(2^{-k}|\cdot|)](t)|^{2} dt\right)^{\frac{1}{2}}\Vert \langle \cdot\rangle^{-s}\Vert_{L^2(-\infty,\infty)}\ell^{\delta(p)-\frac{1}{2}},
\end{align*}
if we impose $s>\frac{1}{2}.$ If additionally we consider the hypothesis \eqref{h}, that is,
\begin{equation}
\sup_{x\in\mathbb{R}^n}\left(\int\limits_{-\infty}^{\infty}\langle t\rangle^{2s}|\mathscr{F}[m(x,\cdot)\psi(2^{-k}|\cdot|)](t)|^{2} dt\right)^{\frac{1}{2}}\leq \Vert m\Vert_{l.u.,{h}^s}\cdot 2^{-k(s-\frac{n}{2})},
\end{equation}
then we have
\begin{align*}
\Vert T_{m(k)} \Vert_{\mathscr{B}(L^p,L^{p'})} &\lesssim \sum_{2^k\leq\ell<2^{k+1}}\Vert m\Vert_{l.u.,{h}^s}\cdot 2^{-k(s-\frac{n}{2})+k(\delta(p)-\frac{1}{2})}\\
&\asymp 2^{k-k(s-\frac{n}{2})+k(\delta(p)-\frac{1}{2})}\Vert m\Vert_{l.u.,{h}^s}=2^{-k(s-\frac{n+1}{2}-\delta(p))}\Vert m\Vert_{l.u.,{h}^s}.
\end{align*}

As a consequence we obtain  $$0\leq \sum_{k=1}^{\infty} \Vert T_{m(k)} \Vert_{\mathscr{B}(L^p,L^{p'})}   \lesssim \sum_{k=1}^\infty2^{-k(s-\frac{n+1}{2}-\delta(p))} \Vert m\Vert_{l.u., {h}^s}<\infty,$$ for $s>\frac{n+1}{2}+\delta(p).$ So, we have
$$\Vert m(x,H) \Vert_{\mathscr{B}(L^p,L^{p'})}\leq  C \Vert m \Vert_{l.u.,{h}^s},\,\,\,1\leq p\leq\frac{2n}{n+2}. $$
Let us note that $\delta(\frac{2n}{n+2})=\frac{1}{2},$ and consequently $T_m:L^{\frac{2n}{n+2}}\rightarrow L^{\frac{2n}{n-2}}$ extends to a bounded operator provided that $s>\frac{n+2}{2}.$ If we assume $s>\frac{3n}{2}$ then from Theorem \ref{TheoremAnnoucenment} we obtain the $L^2$-boundedness of $m(x,H).$ Thus, by the real interpolation we obtain the boundedness of $m(x,H)$ from $L^p$ into $L^{p'}$ for $\frac{2n}{n+2}\leq p\leq 2$ provided that $s>\frac{3n}{2}.$

Now, if $n=1,$ by the Cauchy-Schwarz  inequality we have  for every $\ell\in\mathbb{N}_0$ the estimate,
\begin{equation}\label{estimateonedimensional}
\Vert P_\ell f \Vert_{L^{p'}(\mathbb{R})}=\Vert (f,\phi_{\ell})_{L^2(\mathbb{R})} \phi_{\ell} \Vert_{L^{p'}(\mathbb{R})}\leq \Vert \phi_\ell \Vert_{L^{p'}(\mathbb{R})}^2\Vert f\Vert_{L^p(\mathbb{R})}.
\end{equation}
By Thangavelu's Lemma \ref{Lemma11}, we can estimate the $L^{p'}(\mathbb{R})$-norm of the function $P_{\ell}f$ as follows:
\begin{equation}\label{normPrj1}
  \Vert P_\ell f \Vert_{L^{p'}(\mathbb{R})}\lesssim \ell^{2(\frac{1}{2p'}-\frac{1}{4})}\Vert f \Vert_{L^p(\mathbb{R})}=\ell^{\frac{1}{2}-\frac{1}{p}} \Vert f \Vert_{L^p(\mathbb{R})} ,\,\,\,   \frac{4}{3}<p\leq 2, \end{equation}
\begin{equation}\label{normPrj2}
  \Vert P_\ell f \Vert_{L^{p'}(\mathbb{R})}\lesssim \ell^{2(-\frac{1}{6p'}-\frac{1}{12})}\Vert f \Vert_{L^p(\mathbb{R})}=\ell^{-\frac{1}{2}+\frac{1}{3p}} \Vert f \Vert_{L^p(\mathbb{R})} ,\,\,\,   1<p<\frac{4}{3}.
\end{equation}   
    
    Recalling that for all  $n\in\mathbb{N}$  we have,
\begin{align*}
&|( T_{m(k)} f,\overline{g})_{L^2(\mathbb{R}^n)}|\\
&\leq \sum_{2^k\leq \ell<2^{k+1}}\sup_{x\in\mathbb{R}^n}\int\limits_{-\infty}^{\infty} |\mathscr{F}[m(x,\cdot)\psi(2^{-k}|\cdot|)](t)|dt \\
&\hspace{8cm}\times \Vert P_\ell f\Vert_{L^{p'}(\mathbb{R}^n)}\Vert g\Vert_{L^{p}(\mathbb{R}^n)},
\end{align*}
 we obtain, with $n=1,$
 \begin{align*}
   \Vert  T_{m(k)} f \Vert_{L^{p'}(\mathbb{R}^n)} &\leq    \sum_{2^k\leq \ell<2^{k+1}}2^{-k(s-\frac{n}{2})}\Vert m\Vert_{l.u.,h^s}\Vert P_{\ell}f\Vert_{L^{p'}(\mathbb{R}^n)}\\
    &\lesssim    2^{-k(-1+s-\frac{n}{2})}\Vert m\Vert_{l.u.,h^s}\Vert P_{\ell}f\Vert_{L^{p'}(\mathbb{R}^n)}.
 \end{align*}
 Thus, by \eqref{normPrj1} and  the  estimate above we have,
\begin{align*}0\leq \Vert m(x,H)\Vert_{ \mathscr{B}(L^p,L^{p'})  } &\leq \sum_{k=1}^{\infty} \Vert T_{m(k)} \Vert_{\mathscr{B}(L^p,L^{p'})}   \lesssim \sum_{k=1}^\infty2^{-k(-1+s-\frac{n}{2}-\frac{1}{2}+\frac{1}{p})} \Vert m\Vert_{l.u., {h}^s}\\
&=\sum_{k=1}^\infty2^{-k(s-2+\frac{1}{p})} \Vert m\Vert_{l.u., {h}^s} <\infty,\end{align*} for $s>2-\frac{1}{p},$ when $\frac{4}{3}<p\leq 2,$ and
\begin{align*}0\leq \Vert m(x,H)\Vert_{ \mathscr{B}(L^p,L^{p'})  } &\leq \sum_{k=1}^{\infty} \Vert T_{m(k)} \Vert_{\mathscr{B}(L^p,L^{p'})}   \lesssim \sum_{k=1}^\infty2^{-k(-1+s-\frac{n}{2}+\frac{1}{2}-\frac{1}{3p})} \Vert m\Vert_{l.u., {h}^s}\\
&=  \sum_{k=1}^\infty2^{-k(s-1-\frac{1}{3p})} \Vert m\Vert_{l.u., {h}^s}   <\infty,\end{align*} for $s>1+\frac{1}{3p},$ and $1<p<\frac{4}{3},$ in view of \eqref{normPrj2}.  The $(L^{\frac{4}{3}},L^{4})$-boundedness of $m(x,H)$ now follows by the real interpolation for $s>\frac{3}{2}.$  In fact, if we fix $s>\frac{3}{2}$ there exists $p_{1}>\frac{4}{3}$ satisfying   $s>2-\frac{1}{p_1}>\frac{3}{2}.$ We also have the existence of $p_0>0,$ $1<p_0<\frac{4}{3}$ such that $s>\frac{3}{2}>1+\frac{1}{3p_0}.$
Thus, $m(x,H)$ admits bounded extensions from $L^{p_{0}}(\mathbb{R}^n)$ into $L^{p_{0}'}(\mathbb{R}^n)$ and from $L^{p_1}(\mathbb{R}^n)$ into $L^{p_1'}(\mathbb{R}^n)$ respectively. By the inequality $p_0<\frac{4}{3}<p_1$ and the real interpolation we deduce that $m(x,H)$ has a bounded extension from  $L^{\frac{4}{3}}(\mathbb{R}^n)$ into $L^{4}(\mathbb{R}^n).$ Thus, we have completed  the proof.
\end{proof}

Now, by the real interpolation we give the following general $(L^p,L^q)$ boundedness theorem.

\begin{theorem}\label{Teoremita:Lp:Lq;general}
Let us consider a function $m=m(x,\ell)$  satisfying  \eqref{h}.
Let $m(x,H)$ be a spectral pseudo-multiplier with symbol $\{m(x,\ell)\}_{x\in\mathbb{R}^n,\ell\in 2\mathbb{N}_0+n}.$ Under one of the following conditions
\begin{itemize}
\item  $n\geq 2,$  $1< p\leq \frac{2n}{n+2} $  and $s>\frac{3n-1}{2}+n(\frac{1}{2}-\frac{1}{p}),$
\item $n\geq 2,$  $ \frac{2n}{n+2}< p\leq 2 $ and $s>\frac{3n}{2},$
\item $n=1,$ $\frac{4}{3}\leq p<2,$ $s>\frac{3}{2},$
\item $n=1,$ $1<p<\frac{4}{3},$  $s>1+\frac{1}{3p},$
\end{itemize} the operator $m(x,H)$ extends to a bounded operator  from $L^{p}(\mathbb{R}^n)$ into $L^{q}(\mathbb{R}^n),$ for all $p\leq q\leq p'.$
\end{theorem}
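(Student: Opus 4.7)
\medskip

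The plan is to obtain the statement from a Riesz--Thorin interpolation on the target space, keeping the domain $L^{p}(\mathbb{R}^{n})$ fixed, between the two endpoint bounds
$$m(x,H)\colon L^{p}(\mathbb{R}^{n})\to L^{p'}(\mathbb{R}^{n})\qquad\text{and}\qquad m(x,H)\colon L^{p}(\mathbb{R}^{n})\to L^{p}(\mathbb{R}^{n}).$$
The first of these is precisely Theorem \ref{LpLp'}: its thresholds on $s$ coincide in each regime with the ones appearing in the present statement (in particular the break at $p=\frac{2n}{n+2}$ and, for $n=1$, at $p=\frac{4}{3}$). For the second, I would view $m(x,H)$ as the Hermite pseudo-multiplier associated with the radial-in-$\nu$ symbol $\tilde m(x,\nu):=m(x,2|\nu|+n)$ on $\mathbb{R}^{n}\times\mathbb{N}_{0}^{n}$ and invoke Theorem \ref{TheoremAnnoucenment}. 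A polar-coordinate computation shows that on the dyadic annulus $\{|\xi|\sim 2^{k}\}\subset\mathbb{R}^{n}$ the $n$-dimensional Sobolev norm of $\tilde m(x,\cdot)\psi(2^{-k}|\cdot|)$ is comparable to $2^{k(n-1)/2}$ times the one-dimensional Sobolev norm of $m(x,\cdot)\psi(2^{-k}\cdot)$, so that the localised condition \eqref{h} at the stated regularity $s$ translates into a condition of type \eqref{Fcondition} on $\tilde m$ at a regularity sufficient for Theorem \ref{TheoremAnnoucenment} to yield $L^{p}$-boundedness of $T_{\tilde m}=m(x,H)$.

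Once both endpoint bounds are in place with operator norms controlled by $\|m\|_{l.u.,h^{s}}$, the Riesz--Thorin theorem with fixed domain $L^{p}$ produces $m(x,H)\colon L^{p}\to L^{q}$ for every $q$ satisfying $\frac{1}{q}=\frac{1-\theta}{p}+\frac{\theta}{p'}$ with $\theta\in[0,1]$, which is exactly the range $p\leq q\leq p'$ requested by the statement; the operator norm is controlled by $\|T\|_{L^{p}\to L^{p}}^{1-\theta}\|T\|_{L^{p}\to L^{p'}}^{\theta}$.

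The main obstacle is the case-by-case bookkeeping of the regularity thresholds. In each of the four listed regimes of $p$ one must check that the stated $s$ simultaneously majorises (i) the threshold of Theorem \ref{LpLp'} needed for the $L^{p}\to L^{p'}$ endpoint, and (ii) the threshold of Theorem \ref{TheoremAnnoucenment} needed for the $L^{p}\to L^{p}$ endpoint after the polar-coordinate adjustment in the radial lift. The four listed expressions for $s_{n,p}$ are precisely the maxima of these two thresholds in the corresponding $p$-ranges, which is exactly what accounts for the change of formula at the critical values $p=\frac{2n}{n+2}$ (for $n\geq 2$) and $p=\frac{4}{3}$ (for $n=1$).
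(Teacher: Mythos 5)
Your proposal matches the paper's proof essentially verbatim: the paper likewise takes the two endpoint bounds $L^{p}\to L^{p'}$ (Theorem \ref{LpLp'}) and $L^{p}\to L^{p}$ (Theorem \ref{TheoremAnnoucenment}) and applies Riesz--Thorin with the domain $L^{p}$ fixed, the stated thresholds on $s$ being precisely the maxima of the two requirements in each range of $p$, which is why the formula changes at $p=\frac{2n}{n+2}$ and $p=\frac{4}{3}$. The only divergence is that the paper cites Theorem \ref{TheoremAnnoucenment} for the radial symbol with no comment on converting the one-dimensional condition \eqref{h} into the $n$-dimensional condition \eqref{Fcondition}, whereas you insert a polar-coordinate comparison; be aware that the factor $2^{k(n-1)/2}$ you compute makes the $n$-dimensional localized Sobolev norm \emph{larger} than the one-dimensional one, so this passage costs $\frac{n-1}{2}$ derivatives rather than coming for free --- a point the paper's own argument silently skips as well.
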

\begin{proof} Let us observe that $\delta(\frac{2n}{n+2})=\frac{1}{2}\leq \delta(p)\leq \frac{n-1}{2}$ for $1\leq p\leq \frac{2n}{n+2}$ and that $\frac{1}{2}-\frac{1}{p}\leq 0$ for $1<p\leq 2.$ With these inequalities in mind,
from Theorems \ref{TheoremAnnoucenment} and \ref{LpLp'}, under one of the following conditions
\begin{itemize}
\item $n\geq 2,$ $1<p\leq \frac{2n}{n+2},$ $$ s>\frac{3n-1}{2}+n(\frac{1}{2}-\frac{1}{p})=\max \{\frac{n+1}{2}+\delta(p),\frac{3n-1}{2}+n(\frac{1}{2}-\frac{1}{p})\},$$
\item $n\geq 2,$ $\frac{2(n+3)}{n+5}\leq  p\leq 2,$ $$s>\frac{3n}{2}=\max\{ \frac{3n}{2}, \frac{3n}{2}+\frac{n-1}{2}(\frac{1}{2}-\frac{1}{p})\},$$
\item $n\geq 2,$ $\frac{2n}{n+2}\leq p\leq \frac{2(n+3)}{n+5},$ $$s>\frac{3n}{2}=\max\{ \frac{3n}{2}, \frac{3n}{2}-\frac{1}{6}+\frac{2n}{3}(\frac{1}{2}-\frac{1}{p})\},$$
\item $n=1,$ $\frac{4}{3}\leq p<2,$ $$s>\frac{3}{2}=\max\{ \frac{3}{2},2-\frac{1}{p} \},$$
\item $n=1,$ $1<p<\frac{4}{3},$  $$s>1+\frac{1}{3p}=\max\{ 1+\frac{1}{3p},\frac{4}{3} +\frac{2}{3}(\frac{1}{2}-\frac{1}{p}) \},$$
\end{itemize} the spectral pseudo-multiplier $m(x,H)$ extends to a bounded operator from $L^p(\mathbb{R}^n)$ into $L^{p'}(\mathbb{R}^n)$ and also we have its boundedness from $L^p(\mathbb{R}^n)$ into $L^{p}(\mathbb{R}^n).$ So, by the Riesz-Thorin interpolation theorem we deduce the boundedness of $m(x,H)$ from $L^p(\mathbb{R}^n)$ into $L^{q}(\mathbb{R}^n)$ for all $p\leq q\leq p'.$ So, we finish the proof.
\end{proof}

\subsection{Lower bounds for the operator norm of multipliers on $L^p$ spaces} Now, we estimate from below the operator norm of multipliers associated to the harmonic oscillator.

\begin{theorem}\label{lowerboundnorms} Let $1\leq p\leq \infty.$ 
Let us assume that $T_m$ is a multiplier associated to the harmonic oscillator. If $T_m$ is a bounded operator on $L^p(\mathbb{R}^n),$ then we have the following lower bound for the $L^p$-operator norm of $T_m,$

$$\Vert T_m\Vert_{\mathscr{B}(L^p(\mathbb{R}^n))}\geq \sup_{\nu\in\mathbb{N}^n}|m(\nu)| .$$

\end{theorem}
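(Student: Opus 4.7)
The plan is to test the multiplier on Hermite functions, which are eigenfunctions of $T_m$. Concretely, I would first observe that by orthonormality of the Hermite basis, $\widehat{\phi_\nu}(\phi_\mu) = (\phi_\nu,\phi_\mu)_{L^2(\mathbb{R}^n)} = \delta_{\nu,\mu}$. Substituting $f = \phi_\nu$ into the definition of a Hermite multiplier, the defining series collapses to a single term, yielding $T_m\phi_\nu(x) = m(\nu)\phi_\nu(x)$, so each $\phi_\nu$ is an eigenfunction of $T_m$ with eigenvalue $m(\nu)$.

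Next, I would use the variational definition of the operator norm. Since $\phi_\nu \in L^p(\mathbb{R}^n)$ for every $1 \leq p \leq \infty$ (indeed, by Lemma \ref{Lemma11} and Lemma \ref{Lemma10}, $\|\phi_\nu\|_{L^p}$ is finite and nonzero), we have
\begin{equation*}
\Vert T_m\Vert_{\mathscr{B}(L^p(\mathbb{R}^n))} \geq \frac{\Vert T_m\phi_\nu\Vert_{L^p(\mathbb{R}^n)}}{\Vert \phi_\nu\Vert_{L^p(\mathbb{R}^n)}} = \frac{|m(\nu)|\,\Vert \phi_\nu\Vert_{L^p(\mathbb{R}^n)}}{\Vert \phi_\nu\Vert_{L^p(\mathbb{R}^n)}} = |m(\nu)|.
\end{equation*}
Taking the supremum over $\nu \in \mathbb{N}_0^n$ on the right-hand side yields the desired lower bound. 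There is no serious obstacle here: the only subtlety is knowing that $\|\phi_\nu\|_{L^p} \neq 0$ (which is trivial, as $\phi_\nu$ is a nonzero continuous function) so that the ratio is well-defined, and recognising that the inequality is preserved under passing to the supremum. The argument is essentially a one-line eigenfunction test.
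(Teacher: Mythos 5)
Your argument is correct and is essentially identical to the paper's own proof: both use the orthonormality of the Hermite basis to get $T_m\phi_\nu = m(\nu)\phi_\nu$ and then test the operator norm on the normalised eigenfunction $\phi_\nu/\Vert\phi_\nu\Vert_{L^p}$. The only difference is that you spell out the nonvanishing of $\Vert\phi_\nu\Vert_{L^p}$, which the paper leaves implicit.
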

\begin{proof}
For the proof we can take advantage of the orthogonality properties of the Hermite functions $\phi_\nu,$ $\nu\in\mathbb{N}^n_0.$ By definition we have
\begin{equation}\label{lowerbound}
T_{m}(\phi_\nu)=m(\nu)\phi_\nu.
\end{equation}
As consequence we obtain
\begin{align*}
\Vert T_m\Vert_{\mathscr{B}(L^p(\mathbb{R}^n))}\geq\Vert T_{m}(\frac{\phi_\nu}{\Vert \phi_\nu\Vert_{L^p}}) \Vert_{L^p(\mathbb{R}^n)}=|m(\nu)|.
\end{align*}
Thus, we end the proof.
\end{proof}

\section{Compactness of pseudo-multipliers}\label{compactnesssection}

\subsection{$L^2$-compactness of multipliers}
Now, we use the Fourier analysis produced by the harmonic oscillator in order to characterise the $L^2$-compactness of multipliers. The following is an analogue of a criterion very well known in other settings.

\begin{theorem}\label{CompL2}
Let us assume that  $T_m$ is a bounded multiplier   on $L^2(\mathbb{R}^n).$ Then, $T_m$ is a compact operator on $L^2(\mathbb{R}^n)$ if and only if
$
\lim_{|\nu|\rightarrow \infty}m(\nu)=0.
$
\end{theorem}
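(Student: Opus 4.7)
The plan is to prove both directions using the spectral diagonalization of $T_m$ in the Hermite basis, which gives a complete analogy with the classical multiplier-compactness criteria on other settings (e.g. Fourier multipliers on the torus).

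For the sufficiency direction, I would assume $m(\nu)\to 0$ as $|\nu|\to\infty$ and approximate $T_m$ in operator norm by finite-rank truncations. Concretely, for each $N\in\mathbb{N}$ define the truncated symbol $m_N(\nu):=m(\nu)\mathbf{1}_{\{|\nu|\leq N\}}$ and the associated multiplier $T_{m_N}$. Since the range of $T_{m_N}$ is contained in the finite-dimensional span of $\{\phi_\nu:|\nu|\leq N\}$, each $T_{m_N}$ is of finite rank and therefore compact on $L^2(\mathbb{R}^n)$. Using the Plancherel identity associated to the orthonormal basis $\{\phi_\nu\}$, one gets
\begin{equation*}
\Vert (T_m-T_{m_N})f\Vert_{L^2(\mathbb{R}^n)}^{2}=\sum_{|\nu|>N}|m(\nu)|^{2}|\widehat{f}(\phi_\nu)|^{2}\leq \Big(\sup_{|\nu|>N}|m(\nu)|^{2}\Big)\Vert f\Vert_{L^2(\mathbb{R}^n)}^{2},
\end{equation*}
so $\Vert T_m-T_{m_N}\Vert_{\mathscr{B}(L^2(\mathbb{R}^n))}\leq \sup_{|\nu|>N}|m(\nu)|\to 0$. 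Compactness of $T_m$ then follows from the standard fact that the compact operators form a norm-closed ideal in $\mathscr{B}(L^2(\mathbb{R}^n))$.

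For the necessity direction, I would exploit that $T_m$ is diagonal in the Hermite basis with $T_m\phi_\nu=m(\nu)\phi_\nu$ (as recorded in \eqref{lowerbound}). Since $\{\phi_\nu\}_{\nu\in\mathbb{N}_0^n}$ is an orthonormal system in $L^2(\mathbb{R}^n)$, Bessel's inequality forces $\phi_\nu\rightharpoonup 0$ weakly in $L^2(\mathbb{R}^n)$ as $|\nu|\to\infty$ (any sequence extracted from an orthonormal set converges weakly to $0$ once its indices escape every finite set). A compact operator maps weakly convergent sequences to strongly convergent ones, so $T_m\phi_\nu=m(\nu)\phi_\nu\to 0$ in $L^2(\mathbb{R}^n)$, which gives $|m(\nu)|=\Vert m(\nu)\phi_\nu\Vert_{L^2(\mathbb{R}^n)}\to 0$.

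The argument is essentially routine: the only slightly delicate point is the weak convergence $\phi_\nu\rightharpoonup 0$, but this is a textbook consequence of the fact that the $\phi_\nu$ form an orthonormal basis of $L^2(\mathbb{R}^n)$, so $(f,\phi_\nu)_{L^2(\mathbb{R}^n)}\to 0$ as $|\nu|\to\infty$ for every fixed $f\in L^2(\mathbb{R}^n)$ by Parseval's identity. No main obstacle is expected; both implications reduce to standard Hilbert-space facts combined with the spectral representation of $T_m$.
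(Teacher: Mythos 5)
Your proposal is correct and follows essentially the same route as the paper: the sufficiency direction uses the same finite-rank truncations $T_{m_N}$ together with the Plancherel/orthogonality identity to get norm convergence, and the necessity direction uses the weak convergence $\phi_\nu\rightharpoonup 0$ (from Parseval) combined with the fact that compact operators send weakly convergent sequences to strongly convergent ones. No substantive differences from the paper's argument.
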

\begin{proof}
In order to prove the theorem, let us first assume that $T_m$ is an $L^2$-compact operator. If $f\in L^2(\mathbb{R}^n),$ by the Plancherel theorem we have
\begin{equation}
\Vert f \Vert_{L^2(\mathbb{R}^n)}^2=\sum_{\nu\in\mathbb{N}^n_0}|( f,\phi_\nu )_{L^2}|^2.
\end{equation}
Consequently, we have $( f,\phi_\nu )\rightarrow 0$ as $|\nu|\rightarrow \infty.$ So, we conclude that in $L^2(\mathbb{R}^n),$ the sequence $\{\phi_\nu\}_{\nu\in\mathbb{N}^n_0}$ converges weakly to zero. By the compactness of $T_m$ the sequence $\{T(\phi_\nu)\}_{\nu\in\mathbb{N}^n_0}$ converges to zero in the $L^2$-norm. So,
\begin{equation}
\lim_{|\nu|\rightarrow \infty}\Vert T_m \phi_\nu \Vert_{L^2(\mathbb{R}^n)}=\lim_{|\nu|\rightarrow \infty}|m(\nu)|=0.
\end{equation} For the proof of the converse assertion, let us assume that the sequence $\{m(\nu)\}_{\nu\in\mathbb{N}^n_0}$ tends to zero as $|\nu|\rightarrow \infty.$ In order to show that $T_m$ is compact, we will approximate it with operators of finite rank. So, let us define the sequence of finite rank operators $T_{m(k)},$ $k\in\mathbb{N},$ by
\begin{equation}
T_{m(k)}f:=\sum_{|\nu|\leq k}m(\nu)\widehat{f}(\phi_\nu)\phi_\nu.
\end{equation} By the orthogonality of the Hermite functions, we have
\begin{equation}
\Vert T_{m(k)}f-T_m f\Vert_{L^2(\mathbb{R}^n)}^2=\sum_{|\nu|\geq k}|m(\nu)|^2|( f,\phi_\nu )_{L^2}|^2\leq \sup_{|\nu|\geq k}|m(\nu)|^2\Vert f\Vert_{L^2(\mathbb{R}^n)}^2.
\end{equation} So, we obtain
\begin{equation}
\lim_{k\rightarrow\infty}\Vert T_{m(k)}-T_m \Vert_{\mathscr{B}(L^2)}\leq \lim_{k\rightarrow\infty} \sup_{|\nu|\geq k}|m(\nu)|=0.
\end{equation} With the last line we finish the proof.
\end{proof}

\subsection{$L^p$-compactness and $L^p$-boundedness for multipliers via Littlewood-Paley theory}

In the preceding subsection we have characterised the compactness on $L^2(\mathbb{R}^n)$ of multipliers with the Plancherel theorem as a fundamental tool. In order to investigate the $L^p$-compactness of multipliers for $1<p<\infty,$ but $p\neq2,$ we will use the Littlewood-Paley theorem (which is a partial substitute of the Plancherel theorem on $L^p$-spaces) associated to dyadic decompositions of the spectrum of the harmonic oscillator. The main notion in the Littlewood-Paley theory is the concept of a dyadic decomposition.  Here, the sequence $\{\psi_{l}\}_{l\in\mathbb{N}_{0}}$ is a dyadic decomposition,  defined as follows:  we choose a function $\psi\in \mathscr{D}(0,\infty)$  supported
in  $[1/2, 1],$ $\psi=1$ on $[2/3,4/5].$ Denote by $\psi_{l}$ the function $\psi_{l}(t)=\psi(2^{-l}t),$ $t\in \mathbb{R}.$ For some smooth compactly supported  function $\psi_{0}$ we have
\begin{eqnarray}\label{deco1}
\sum_{l\in\mathbb{N}_{0}}\psi_{l}(\lambda)=1,\,\,\, \text{for every}\,\,\, \lambda>0.
\end{eqnarray} Now we present the Littlewood-Paley Theorem in the form of the following result (see Theorems 7.1 and 7.3 of  \cite{BuiDuong} and Proposition 5 of \cite{PeXu}).

\begin{theorem}\label{LPT}
Let  $1<p<\infty$ and for every $l\in\mathbb{N}_0,$ let us consider the multipliers $T_{\psi_l}$ given by
\begin{equation}
T_{\psi_l}f(x):=\sum_{2^l\leq \langle\nu\rangle<2^{l+1}}\psi_l(\langle\nu\rangle)\phi_{\nu}(x)\widehat{f}(\phi_\nu),\,\,\langle\nu \rangle:=(1+|\nu|^2)^{\frac{1}{2}}.
\end{equation}
Then there exist  constants $0<c_p,C_{p}<\infty$ depending only on $p$  such that
\begin{equation}\label{LPTequ}
c_p\Vert f\Vert_{L^{p}(\mathbb{R}^n)}\leq \left\Vert \left(\sum_{l=0}^{\infty} |T_{\psi_l}f(x)|^{2}    \right)^{\frac{1}{2}}\right\Vert_{L^{p}(\mathbb{R}^n)}\leq  C_{p}\Vert f\Vert_{L^{p}(\mathbb{R}^n)},
\end{equation}
holds for all $f\in L^{p}(\mathbb{R}^n).$
\end{theorem}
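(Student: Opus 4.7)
The plan is to reduce Theorem \ref{LPT} to the scalar Hermite multiplier theorem (of the Mihlin--Marcinkiewicz type recorded in the introduction via \eqref{thangavelucondition}) by the classical Khintchine randomisation argument, and then obtain the lower bound by a duality / polarisation argument.

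For the upper bound in \eqref{LPTequ}, I would first note that the symbol $\{\psi_l(\langle\nu\rangle)\}$ is essentially a smooth dyadic partition of $[2^l,2^{l+1})$ in the spectral parameter $\langle\nu\rangle$. Let $\{r_l\}_{l\in\mathbb{N}_0}$ denote the Rademacher functions on $[0,1]$. By Khintchine's inequality applied pointwise in $x$,
\begin{equation*}
\left(\sum_{l=0}^{\infty}|T_{\psi_l}f(x)|^2\right)^{1/2}\asymp \left(\int_0^1 \Bigl|\sum_{l=0}^\infty r_l(\omega)T_{\psi_l}f(x)\Bigr|^p d\omega\right)^{1/p},
\end{equation*}
so that, after raising to the $p$-th power and integrating in $x$ and $\omega$, the square-function estimate reduces to proving the uniform bound
\begin{equation*}
\sup_{\epsilon\in\{-1,+1\}^{\mathbb{N}_0}}\bigl\|T_{m_\epsilon}\bigr\|_{\mathscr{B}(L^p(\mathbb{R}^n))}<\infty,\qquad m_\epsilon(\nu):=\sum_{l=0}^\infty \epsilon_l\psi_l(\langle\nu\rangle).
\end{equation*}
The symbol $m_\epsilon$ is a radial Hermite multiplier whose discrete derivatives $\Delta^\alpha_\nu m_\epsilon(\nu)$ satisfy the Marcinkiewicz-type estimate \eqref{thangavelucondition} uniformly in $\epsilon$, because each $\psi_l$ is a rescaled copy of the fixed bump $\psi$ and only a bounded number of them overlap at any given scale. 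Applying Thangavelu's multiplier theorem (or equivalently Proposition \ref{KNcondition'} of this paper, reading $m_\epsilon$ as an $x$-independent symbol) yields the required uniform bound, and hence the upper bound in \eqref{LPTequ}.

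For the lower bound I would argue by duality. Choose an auxiliary family $\{\widetilde{\psi}_l\}$ of smooth bumps of the same type with $\widetilde{\psi}_l\psi_l=\psi_l$; then $T_{\psi_l}=T_{\psi_l}T_{\widetilde{\psi}_l}$ and, since $\sum_l \psi_l\equiv 1$ on the spectrum of $H$, one has $f=\sum_l T_{\psi_l}f$ in $L^p$. For $g\in L^{p'}$ with $\|g\|_{L^{p'}}=1$, the Cauchy--Schwarz inequality applied to the $\ell^2$-sum in $l$ gives, pointwise,
\begin{equation*}
\Bigl|\sum_{l}(T_{\widetilde{\psi}_l}f)(x)\,\overline{(T_{\psi_l}g)(x)}\Bigr|\leq \Bigl(\sum_l |T_{\widetilde{\psi}_l}f(x)|^2\Bigr)^{1/2}\Bigl(\sum_l |T_{\psi_l}g(x)|^2\Bigr)^{1/2},
\end{equation*}
and then Hölder in $x$ together with the already established upper bound (applied to the tilded decomposition in $L^p$ and to the original one in $L^{p'}$, both of which are dyadic of the same type and thus satisfy the same estimate) yields
\begin{equation*}
|(f,g)|\leq \bigl\|\widetilde{\mathcal{S}}f\bigr\|_{L^p}\,\bigl\|\mathcal{S}g\bigr\|_{L^{p'}}\lesssim \bigl\|\mathcal{S}f\bigr\|_{L^p},
\end{equation*}
where in the last step I absorb $\widetilde{\mathcal{S}}$ back into $\mathcal{S}$ by repeating the upper-bound argument (the two square functions are equivalent in $L^p$, $1<p<\infty$, since each $\widetilde{\psi}_l$ is a finite linear combination of translates of $\psi_l$). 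Taking the supremum over $g$ produces the lower bound $c_p\|f\|_{L^p}\leq \|\mathcal{S}f\|_{L^p}$.

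The main obstacle is the uniform Hermite multiplier estimate for $m_\epsilon$. One must verify that the $\ell^\infty$ bound on the signs $\epsilon_l$ does not inflate the constants in the Marcinkiewicz-type condition; this rests on the observation that, for each fixed $\nu$, the sum defining $m_\epsilon(\nu)$ has at most a fixed number of nonzero terms (because the dyadic bumps have bounded overlap), and that finite differences $\Delta^\alpha_\nu$ of a smooth function $\psi(2^{-l}\langle\cdot\rangle)$ obey the expected $2^{-l|\alpha|}$ gain, which on the dyadic block $\langle\nu\rangle\asymp 2^l$ coincides with the required $(1+|\nu|)^{-|\alpha|}$. Handling this cleanly is the only delicate point; the remainder of the proof is structural.
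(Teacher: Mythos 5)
Your proof attempt cannot be compared line-by-line with the paper's, because the paper does not prove Theorem \ref{LPT} at all: it is imported verbatim from the literature (Theorems 7.1 and 7.3 of \cite{BuiDuong} and Proposition 5 of \cite{PeXu}), so any self-contained argument is by definition a different route. That said, the route you take -- Khintchine randomisation to reduce the upper bound in \eqref{LPTequ} to a sign-uniform bound for the radial multipliers $m_\epsilon(\nu)=\sum_l\epsilon_l\psi_l(\langle\nu\rangle)$, verification of the discrete Marcinkiewicz condition \eqref{thangavelucondition} uniformly in $\epsilon$ (using that $\Delta_\nu^\alpha\psi(2^{-l}\langle\cdot\rangle)$ gains $2^{-l|\alpha|}\asymp(1+|\nu|)^{-|\alpha|}$ on the block and that the bumps have bounded overlap), an appeal to Thangavelu's multiplier theorem (or Corollary \ref{DiscTangCond} with $\rho=2n+1$, since the smooth bumps satisfy estimates of every order), and then the standard polarisation/duality argument with an enlarged family $\widetilde{\psi}_l$ for the lower bound -- is exactly the classical mechanism underlying the cited references, and every step is sound. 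What your proof buys is self-containedness within the toolkit the paper already assembles; what the citation buys the authors is brevity and, in \cite{BuiDuong}, a version valid in greater generality (weights, Triebel--Lizorkin scales). One point deserves explicit care: as printed, $T_{\psi_l}$ carries both the factor $\psi_l(\langle\nu\rangle)$ and the sharp restriction $2^l\leq\langle\nu\rangle<2^{l+1}$. Your reduction silently treats the restriction as redundant (i.e.\ assumes $\mathrm{supp}\,\psi_l$ lies in the dyadic block), which is the intended reading consistent with \eqref{deco1}; if the indicator genuinely truncated a wider bump, the discrete differences of $m_\epsilon$ would be $O(1)$ rather than $O(2^{-l|\alpha|})$ at the block boundaries and the Marcinkiewicz estimate would fail. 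You should state this normalisation explicitly, but it is a defect of the theorem's phrasing rather than of your argument.
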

The following $L^p$ multiplier theorem provides sufficient conditions for the $L^p$-boundedness of multipliers (different from the H\"ormander-Mihlin condition) and their $L^p$-compactness.
\begin{theorem}\label{LpTLpComp}
Let us assume that $T_m$ is a multiplier and let $1<p<\infty.$ Let us assume  that there exists a sequence $\{\nu_l\}$ satisfying: $2^l\leq |\nu_l|<2^{l+1},$ $m(\nu_l)\neq 0$ for every $l\in\mathbb{N}_0$ and
\begin{equation}\label{asymptoticproperty}
\lim_{l\rightarrow\infty}\frac{m(\nu_l')}{m(\nu_l)}=K\neq 0,
\end{equation} for every sequence $\{\nu_l'\}$ where $2^l\leq |\nu_l'|<2^{l+1}$ (the constant $K$ depends on the sequences $\nu_l$ and $\nu_l'$). Then,
\begin{itemize}
\item if $\Vert m\Vert_{L^\infty(\mathbb{N}_0^n)}<\infty,$ then the operator $T_m$ extends to a bounded operator on $L^p(\mathbb{R}^n)$ and \begin{equation}
\Vert T_m\Vert_{\mathscr{B}(L^p(\mathbb{R}^n))}\leq C \Vert m\Vert_{L^\infty(\mathbb{N}_0^n)}.
\end{equation}
\item if $|m(\nu)|\rightarrow 0$ as $|\nu|\rightarrow \infty,$ then the operator $T_m$ extends to a compact operator on $L^p(\mathbb{R}^n).$
\end{itemize}
\end{theorem}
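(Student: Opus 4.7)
The plan is to use Theorem \ref{LPT} (the Littlewood--Paley theorem associated with the harmonic oscillator) as the main tool, together with the asymptotic structure that the hypothesis imposes on $m$. The first step is to extract the quantitative consequence that $h_l(\nu) := m(\nu)/m(\nu_l)$ converges to $1$ uniformly over $\nu$ in the shell $\{2^l \leq \langle\nu\rangle < 2^{l+1}\}$ as $l \to \infty$. Indeed, the constant selector $\nu_l' = \nu_l$ forces $K = 1$; and if there existed $\varepsilon > 0$ and infinitely many indices $l$ admitting two points $A_l, B_l$ in the $l$-th shell with $|h_l(A_l) - h_l(B_l)| \geq \varepsilon$, then the three selectors $\nu_l' = A_l$, $\nu_l' = B_l$, and the one alternating between $A_l$ and $B_l$ along this subset would each have to converge by hypothesis; but any two selectors agreeing on an infinite set must share the same limit, which would force $\lim h_l(A_l) = \lim h_l(B_l)$, contradicting the $\varepsilon$-gap.

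For the $L^p$-boundedness under $\|m\|_{L^\infty(\mathbb{N}_0^n)} < \infty$, I would apply the lower bound in Theorem \ref{LPT} to $T_m f$, yielding $\|T_m f\|_p \lesssim \|(\sum_l |T_{m\psi_l} f|^2)^{1/2}\|_p$. Factoring $T_{m\psi_l} = m(\nu_l) T_{\psi_l} + m(\nu_l) T_{\psi_l(h_l - 1)}$ and applying the $\ell^2$-triangle inequality, the main piece is controlled by $\|m\|_\infty \|(\sum_l |T_{\psi_l} f|^2)^{1/2}\|_p \leq C_p \|m\|_\infty \|f\|_p$ through the upper bound in Theorem \ref{LPT}. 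For the error piece, which carries the symbols $(h_l-1)\psi_l$ with $\|(h_l-1)\psi_l\|_\infty \to 0$ by the first step, I would randomise by Rademacher signs and apply Khintchine's inequality, reducing matters to controlling the operator norm of a randomly signed multiplier supported on dyadic shells, which in turn can be handled via a Marcinkiewicz-type estimate to yield the bound $\lesssim \|m\|_\infty \|f\|_p$.

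For the $L^p$-compactness under $|m(\nu)| \to 0$, I would approximate $T_m$ in operator norm by the finite-rank truncations $T_{m(k)} f := \sum_{|\nu|\leq k} m(\nu)\widehat{f}(\phi_\nu)\phi_\nu$, mirroring the scheme from Theorem \ref{CompL2}. The tail symbol $m \cdot \mathbf{1}_{|\nu|>k}$ inherits the asymptotic shell-constancy hypothesis of $m$ (for those $l$ with $2^l > k$), while its sup-norm $\sup_{|\nu|>k}|m(\nu)|$ tends to $0$. Applying the $L^p$-boundedness result from the previous step to this tail would give $\|T_m - T_{m(k)}\|_{\mathscr{B}(L^p)} \to 0$, and since the compact operators form a closed subspace of $\mathscr{B}(L^p)$, the operator $T_m$ is compact.

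The main obstacle will be the error estimate in the boundedness step, which essentially reduces to the Hermite analogue of the vector-valued Littlewood--Paley inequality $\|(\sum_l |T_{a_l} f|^2)^{1/2}\|_p \lesssim (\sup_l \|a_l\|_\infty)\|f\|_p$ for uniformly bounded symbols $a_l$ supported on single dyadic shells. This is plausibly provable via Khintchine randomisation together with a Marcinkiewicz-type bound on the resulting randomly signed multipliers, but its rigorous justification in the Hermite-oscillator setting is where most of the technical work would be concentrated.
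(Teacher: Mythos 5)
Your proposal follows essentially the same route as the paper: sandwich $T_m$ between the two halves of the Littlewood--Paley inequality of Theorem \ref{LPT}, factor $m(\nu_l)$ out of each dyadic block and bound it by $\Vert m\Vert_{L^\infty(\mathbb{N}_0^n)}$, and for compactness approximate by the finite-rank truncations $T_{m(k)}$ exactly as in the scheme of Theorem \ref{CompL2}. Your preliminary step --- that $h_l(\nu)=m(\nu)/m(\nu_l)\to 1$ uniformly over the $l$-th shell, obtained from the three-selector argument --- is correct and makes precise what hypothesis \eqref{asymptoticproperty} actually delivers. The one place where you depart from the paper is that you explicitly isolate the error operators $T_{\psi_l(h_l-1)}$ and acknowledge that the vector-valued bound $\Vert(\sum_l|T_{a_l}f|^2)^{1/2}\Vert_{L^p}\lesssim(\sup_l\Vert a_l\Vert_\infty)\Vert f\Vert_{L^p}$ for shell-supported symbols is the real technical content; the paper dispatches this point by writing ``$\asymp$'' between the two square functions, citing only the boundedness of $m$ and \eqref{asymptoticproperty}. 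Be aware that this is a genuine issue for $p\neq 2$: the $L^p$ operator norm of a multiplier supported on a single dyadic shell is not controlled by the sup norm of its symbol alone, and the hypothesis provides no rate for $\Vert h_l-1\Vert_\infty\to 0$, so neither your Khintchine/Marcinkiewicz sketch (which would require difference estimates on $h_l$, not just sup bounds) nor the paper's bare ``$\asymp$'' closes this step as written. In short, your proposal reproduces the paper's argument and correctly pinpoints the step at which both versions are incomplete.
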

\begin{proof}
Let us assume that $f\in L^p(\mathbb{R}^n),$ $1<p<\infty.$ By the Littlewood-Paley Theorem (see Theorem \ref{LPT} above) we have
\begin{align*}
&\Vert T_m f\Vert_{L^{p}(\mathbb{R}^n)} \lesssim  \left\Vert \left(\sum_{l=0}^{\infty} |T_{\psi_l}T_{m}f(x)|^{2}    \right)^{\frac{1}{2}}\right\Vert_{L^{p}(\mathbb{R}^n)}
\\&= \left\Vert \left(\sum_{l=0}^{\infty}  |  \sum_{2^l\leq \langle\nu\rangle<2^{l+1}}\psi_l(\langle\nu\rangle)m(\nu)\phi_{\nu}(x)\widehat{f}(\phi_\nu)   |^2  \right)^{\frac{1}{2}}\right\Vert_{L^{p}(\mathbb{R}^n)}.
\end{align*}
Taking into account both, that $m$ is bounded and the condition \eqref{asymptoticproperty} we have
\begin{align*}
& \left\Vert  \left(\sum_{l=0}^{\infty}  |  \sum_{2^l\leq \langle\nu\rangle<2^{l+1}}\psi_l(\langle\nu\rangle)m(\nu)\phi_{\nu}(x)\widehat{f}(\phi_\nu)   |^2  \right)^{\frac{1}{2}}\right\Vert_{L^{p}(\mathbb{R}^n)}\\
& \hspace{2cm}= \left\Vert  \left(\sum_{l=0}^{\infty} |m(\nu_l) |^2|  \sum_{2^l\leq \langle\nu\rangle<2^{l+1}}\psi_l(\langle\nu\rangle)\frac{m(\nu)}{m(\nu_l)}\phi_{\nu}(x)\widehat{f}(\phi_\nu)   |^2  \right)^{\frac{1}{2}}\right\Vert_{L^{p}(\mathbb{R}^n)}\\
& \hspace{2cm}\leq \Vert m\Vert_{L^\infty(\mathbb{N}_0^n)} \left\Vert  \left(\sum_{l=0}^{\infty} |  \sum_{2^l\leq \langle\nu\rangle<2^{l+1}}\psi_l(\langle\nu\rangle)\frac{m(\nu)}{m(\nu_l)}\phi_{\nu}(x)\widehat{f}(\phi_\nu)   |^2  \right)^{\frac{1}{2}}\right\Vert_{L^{p}(\mathbb{R}^n)}\\
&  \hspace{2cm}\asymp \Vert m\Vert_{L^\infty(\mathbb{N}_0^n)} \left\Vert  \left(\sum_{l=0}^{\infty} |  \sum_{2^l\leq \langle\nu\rangle<2^{l+1}}\psi_l(\langle\nu\rangle)\phi_{\nu}(x)\widehat{f}(\phi_\nu)   |^2  \right)^{\frac{1}{2}}\right\Vert_{L^{p}(\mathbb{R}^n)}\\
&  \hspace{2cm}\lesssim \Vert m\Vert_{L^\infty(\mathbb{N}_0^n)} \left\Vert f \right\Vert_{L^{p}(\mathbb{R}^n)},\\
\end{align*} where in the last line we have used the Littlewood-Paley Theorem \ref{LPT} again. So, we have proved the first part of the theorem. Now, if in addition  $|m(\nu)|\rightarrow 0$ as $|\nu|\rightarrow \infty,$ we will prove that $T_m$ can be approximated by rank finite operators and consequently we obtain the compactness of $T_m.$ Let us define for every $k\in\mathbb{N}$ the operator,
\begin{equation}
T_{m(k)}f:=\sum_{\langle \nu\rangle\leq k}m(\nu)\widehat{f}(\phi_\nu)\phi_\nu.
\end{equation} A similar argument as in the proof of the first assertion shows us that the estimate
\begin{equation}
\Vert T_m f-T_{m(k)}f\Vert_{L^{p}(\mathbb{R}^n)} \lesssim \sup_{\langle \nu\rangle\geq 2^k}| m(\nu)|\Vert f\Vert_{L^p},
\end{equation} holds true. Consequently we have the norm estimates
\begin{eqnarray}
\Vert T_m -T_{m(k)}\Vert_{\mathscr{B}(L^{p}(\mathbb{R}^n))} \lesssim \sup_{\langle \nu\rangle\geq 2^k}| m(\nu)|\rightarrow 0\textnormal{ as }|\nu|\rightarrow\infty.
\end{eqnarray} So, we finish the proof.
\end{proof}
\begin{remark}
 Let us note that $m(\nu):=(1+|\nu|)^{i\tau},$ $\tau\in\mathbb{R},$  satisfies \eqref{asymptoticproperty} and clearly it is a bounded symbol. By the preceding theorem we conclude that $T_{m}$ extends to a bounded operator on $L^p(\mathbb{R}^n),$ $1<p<\infty.$ Also, is easy to see that $m_\kappa(\nu):=(1+|\nu|)^{-\kappa}$ for $\kappa>0$ satisfies \eqref{asymptoticproperty} and $|m_\kappa(\nu)|\rightarrow 0$ as $|\nu|\rightarrow\infty.$ Consequently every operator $T_{m_\kappa}$ extends to a compact operator on $L^p(\mathbb{R}^n)$ for all $1<p<\infty.$
\end{remark}

\section{$L^p$-boundedness for multilinear pseudo-multipliers}\label{multilinear}
In this section we analyse the boundedness of multilinear pseudo-multipliers on Lebesgue spaces which are operators defined by
\begin{equation}\label{pseudo-multiplier'}
T_{m}(f_1,\cdots, f_\varkappa)(x):=\sum_{\nu\in\mathbb{N}_{0}^{n\varkappa}}m(x,\nu)\widehat{f}_1(\phi_{\nu_1})\cdots \widehat{f}_\varkappa(\phi_{\nu_\varkappa}) \phi_{\nu_1}(x)\cdots \phi_{\nu_\varkappa}(x),
\end{equation}
for all $(f_{1},f_{2},\cdots ,f_\varkappa)\in\mathscr{D}(\mathbb{R}^n)^{\varkappa},$ where $\nu:=(\nu_1,\cdots, \nu_\varkappa),$  $\nu_i\in\mathbb{N}_0^n,$ and $x\in\mathbb{R}^{n}.$
In order to prove a general theorem on the boundedness of these operators, we establish the following proposition.
\begin{proposition}\label{firstlemmamultilinear}
Let us consider a pseudo-multiplier $T_m$ defined on $\mathscr{D}(\mathbb{R}^n)^{\varkappa}$ and let $m:\mathbb{R}^n\times \mathbb{N}_0^{n\varkappa}\rightarrow\mathbb{C}$ be its symbol. Let us assume that for $s>0,$ $m$ satisfies the condition
\begin{equation}\label{multilinearhormander}
\Vert m \Vert_{l.u.,\mathcal{H}^s}:=\sup_{k>0,\,x\in\mathbb{R}^n} \,2^{k(s-\frac{n\varkappa}{2})}\Vert  \langle z \rangle^{s} \mathscr{F}_{H}^{-1}[m(x,\cdot)\psi(2^{-k}|\cdot|)](z)\Vert_{L^2({\mathbb{R}}^{n\varkappa}_z)}<\infty,
\end{equation}
and that $\varkappa\geq 2.$ Then
\begin{itemize}
\item[1.] If $s>\frac{3n\varkappa}{2}+{(\varkappa-1)}\gamma_\infty,$ the operator $ T_m$ extends to a bounded multilinear operator from $L^{1}\times L^{\infty}\times\cdots \times L^{\infty}\times L^{\infty}$ into $L^{1}(\mathbb{R}^n),$ and
\begin{equation}
\Vert T_m \Vert_{\mathscr{B}(L^1\times (L^{\infty})^{\varkappa-1},\,L^1)}\leq C(\Vert m \Vert_{l.u.,\mathcal{H}^s}+\Vert m(\cdot,0) \Vert_{L^{\infty}(\mathbb{R}^n)}).
\end{equation}
\item[2.] If $s>\frac{3n\varkappa}{2}+\frac{(\varkappa-1)n}{4}-\frac{1}{12},$ the operator $ T_m$ extends to a bounded multilinear operator from $L^{2}\times L^{\infty}\times\cdots \times L^{\infty}\times L^{\infty}$ into $L^{2}(\mathbb{R}^n),$ and
\begin{equation}
\Vert T_m \Vert_{\mathscr{B}(L^2\times (L^{\infty})^{\varkappa-1},\,L^2)}\leq C(\Vert m \Vert_{l.u.,\mathcal{H}^s}+\Vert m(\cdot,0) \Vert_{L^{\infty}(\mathbb{R}^n)}).
\end{equation}
\item[3.] If  $s>\frac{3n\varkappa}{2}+\frac{(n-1)(\varkappa-1)}{2}+\gamma_p,$ $\gamma_p$ { defined as in } \eqref{gammap}, the operator $ T_m$ extends to a bounded multilinear operator from $L^{p}\times L^{\infty}\times\cdots \times L^{\infty}\times L^{\infty}$ into $L^{p}(\mathbb{R}^n),$ and
\begin{equation}
\Vert T_m \Vert_{\mathscr{B}(L^p\times (L^{\infty})^{\varkappa-1},\,L^p)}\leq C(\Vert m \Vert_{l.u.,\mathcal{H}^s}+\Vert m(\cdot,0) \Vert_{L^{\infty}(\mathbb{R}^n)}),
\end{equation} for all $2< p\leq \infty.$
\end{itemize}
\end{proposition}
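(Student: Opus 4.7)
The plan is to extend the dyadic decomposition argument of Proposition \ref{proposition1} to the multilinear setting. First I would write
\begin{equation*}
m(x,\nu) = m(x,0) + \sum_{k=0}^{\infty} m_k(x,\nu), \qquad m_k(x,\nu) := m(x,\nu)\cdot 1_{\{2^{k}\leq |\nu| < 2^{k+1}\}},
\end{equation*}
so that $T_m = T_0 + \sum_{k\geq 0} T_{m(k)}$. The degenerate piece $T_0(f_1,\ldots,f_\varkappa)(x) = m(x,0)\bigl(\prod_{j}\widehat{f}_j(\phi_0)\bigr)\phi_0(x)^\varkappa$ is trivially bounded by $\|m(\cdot,0)\|_{L^\infty}$ on each of the three target spaces since $\phi_0$ is Schwartz, so everything reduces to summing a geometric series in $k$ of the multilinear operator norms $\|T_{m(k)}\|$.

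The common technical step for all three parts is the Fourier--Hermite kernel representation
\begin{equation*}
m_k(x,\nu) = \int_{\mathbb{R}^{n\varkappa}} K_k(x,z)\,\Phi_\nu(z)\, dz, \qquad \Phi_\nu(z) := \phi_{\nu_1}(z_1)\cdots \phi_{\nu_\varkappa}(z_\varkappa),
\end{equation*}
where $K_k(x,z) := \mathscr{F}_{H}^{-1}[m(x,\cdot)\psi(2^{-k}|\cdot|)](z)$. Applying Cauchy--Schwarz in $z$ with weight $\langle z\rangle^{s}$, invoking the H\"ormander hypothesis \eqref{multilinearhormander}, and using the tensor-product bound $\|\Phi_\nu\|_{L^\infty(\mathbb{R}^{n\varkappa})}\lesssim |\nu|^{-1/12}$ from Remark \ref{infty} (admissible because $s>n\varkappa/2$ is implied by each of the stated thresholds), one obtains uniformly in $x$
\begin{equation*}
|m_k(x,\nu)| \lesssim 2^{-k(s-n\varkappa/2)}\|m\|_{l.u.,\mathcal{H}^s}\cdot|\nu|^{-1/12}.
\end{equation*}

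Each of the three parts is then handled by dualizing against the conjugate space of the target and estimating the remaining Hermite factors. For part 1, dualizing against $g\in L^\infty$ and using $|\widehat{f}_1(\phi_{\nu_1})|\leq\|f_1\|_{L^1}\|\phi_{\nu_1}\|_{L^\infty}$, $|\widehat{f}_j(\phi_{\nu_j})|\leq\|f_j\|_{L^\infty}\|\phi_{\nu_j}\|_{L^1}$ for $j\geq 2$, together with the iterated Cauchy--Schwarz bound $\|\phi_{\nu_1}\cdots\phi_{\nu_\varkappa}\|_{L^1(\mathbb{R}^n)}\leq\prod_{j\geq 3}\|\phi_{\nu_j}\|_{L^\infty}$ (which follows from $\|\phi_{\nu_i}\|_{L^2}=1$), one regroups the Hermite factors so that $(\varkappa-1)$ pairings $\|\phi_{\nu_j}\|_{L^1}\|\phi_{\nu_j}\|_{L^\infty}\lesssim|\nu_j|^{\gamma_\infty}$ from Lemma \ref{Lemma1} appear, producing a total factor $|\nu|^{(\varkappa-1)\gamma_\infty}$ after summing the $O(2^{kn\varkappa})$ lattice points in the dyadic shell. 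Part 2 is analogous with $|\widehat{f}_1(\phi_{\nu_1})|\leq\|f_1\|_{L^2}$, $g\in L^2$, and $\|\phi_{\nu_1}\cdots\phi_{\nu_\varkappa}\|_{L^2}\leq\prod_{j\geq 2}\|\phi_{\nu_j}\|_{L^\infty}$; here I would bound $\|\phi_{\nu_j}\|_{L^\infty}\lesssim 1$ by Remark \ref{infty} so that only $\prod_{j\geq 2}\|\phi_{\nu_j}\|_{L^1}\lesssim|\nu|^{(\varkappa-1)n/4}$ from Lemma \ref{Lemma10} survives. For part 3, dualize against $g\in L^{p'}$, use $|\widehat{f}_1(\phi_{\nu_1})|\leq\|f_1\|_{L^p}\|\phi_{\nu_1}\|_{L^{p'}}$, and bound $\|\phi_{\nu_1}\cdots\phi_{\nu_\varkappa}\|_{L^p}\leq\|\phi_{\nu_1}\|_{L^p}\prod_{j\geq 2}\|\phi_{\nu_j}\|_{L^\infty}$; regrouping then yields the pairing $\|\phi_{\nu_1}\|_{L^p}\|\phi_{\nu_1}\|_{L^{p'}}\lesssim|\nu_1|^{\gamma_p}$ and $(\varkappa-1)$ further pairings $\|\phi_{\nu_j}\|_{L^1}\|\phi_{\nu_j}\|_{L^\infty}\lesssim|\nu_j|^{(n-1)/2}$ via Lemma \ref{tataru}. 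In each case the resulting geometric series in $k$ converges precisely under the stated regularity threshold.

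The main obstacle is the careful bookkeeping required to match each of the three thresholds. In particular one must select the correct estimate for $\|\phi_{\nu_j}\|_{L^\infty}$---the tensor-product improvement of Remark \ref{infty} for parts 1--2 versus Tataru's Lemma \ref{tataru} for part 3---and partition the product $\phi_{\nu_1}\cdots\phi_{\nu_\varkappa}$ via Cauchy--Schwarz or H\"older so that the remaining Hermite norms recombine into pairings of the form $\|\phi_{\nu_j}\|_{L^p}\|\phi_{\nu_j}\|_{L^{p'}}$ controlled by Lemma \ref{Lemma1}. Once the pairings are correctly arranged, summing over the dyadic shell and the subsequent geometric series in $k$ are routine.
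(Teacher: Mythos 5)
Your proposal is correct and follows essentially the same route as the paper's proof: the dyadic decomposition $m=m(\cdot,0)+\sum_{k}m_k$, dualization against the conjugate space, the Fourier--Hermite kernel representation with Cauchy--Schwarz against the weight $\langle z\rangle^{s}$ under hypothesis \eqref{multilinearhormander}, the regrouping of the remaining Hermite norms into the pairings $\Vert\phi_{\nu_j}\Vert_{L^1}\Vert\phi_{\nu_j}\Vert_{L^{\infty}}$ (resp.\ $\Vert\phi_{\nu_1}\Vert_{L^p}\Vert\phi_{\nu_1}\Vert_{L^{p'}}$) controlled by Lemma \ref{Lemma1}, and the summation over the $O(2^{kn\varkappa})$ points of the dyadic shell followed by a geometric series in $k$. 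The one caution is that your ``common step'' spends all of $\Vert\phi_{\nu_1}\cdots\phi_{\nu_\varkappa}\Vert_{L^{\infty}(\mathbb{R}^{n\varkappa})}$ on the factor $|\nu|^{-1/12}$, which would leave no $L^{\infty}$-factors available for the $(\varkappa-1)$ pairings you subsequently invoke in parts 1 and 3 (and for $n=1$, $\varkappa\geq 3$ the unpaired product $\prod_{j\geq 2}\Vert\phi_{\nu_j}\Vert_{L^1}\lesssim|\nu|^{(\varkappa-1)/4}$ alone would overshoot the stated threshold), so, as in the paper, the $|\nu|^{-1/12}$ gain should be reserved for part 2 while the individual $\Vert\phi_{\nu_j}\Vert_{L^{\infty}}$ factors are retained for the pairings elsewhere.
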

\begin{proof}
We proceed with the proof of the first statement. Since
\begin{equation}
\Vert T_m f\Vert_{L^{1}(\mathbb{R}^n)}=\sup_{\Vert g\Vert_{L^{\infty}}=1}|(T_m f,\overline{g})|,
\end{equation}
similar to the previous section we will estimate $|(T_m f,\overline{g})|$ for $\Vert g\Vert_{L^{\infty}}=1.$ Now, for $f:=(f_{1},f_{2},\cdots ,f_\varkappa)\in\mathscr{D}(\mathbb{R}^n)^{\varkappa}$ we have
\begin{equation}
|(T_m f,\overline{g})|\leq |(T_0 f,\overline{g}) |+\sum_{k=0}^{\infty}|(T_{m(k)} f,\overline{g})|,
\end{equation}
where $T_{m(k)}$ is the pseudo-multiplier associated to the symbol $$m_{k}(x,\nu)=m(x,\nu)\cdot 1_{[2^{k},2^{k+1})}(|\nu|),$$ and $T_{0}$ is the operator with symbol $m(x,0)\delta_{\nu,0}.$ For $z_j\in \mathbb{R}^n,$ $z=(z_1,z_2,\cdots ,z_\varkappa)\in \mathbb{R}^{n\varkappa},$ and $\phi_\nu(z)=\phi_{\nu_1}(z_1)\cdots \phi_{\nu_\varkappa}(z_\varkappa) ,$ the inversion formula for the Fourier-Hermite transform gives
\begin{align*}
&|(T_{m(k)} f,\overline{g})|  =\left| \int_{\mathbb{R}^n}    T_{m(k)}f(x)g(x)dx\right| \\
& =| \int_{\mathbb{R}^n} \sum_{ 2^{K}\leq |\nu|<2^{k+1} }m_k(x,\nu)\widehat{f}_1(\phi_{\nu_1})\cdots \widehat{f}_\varkappa(\phi_{\nu_\varkappa}) \phi_{\nu_1}(x)\cdots \phi_{\nu_\varkappa}(x)g(x)dx|\\
&\leq \sum_{ 2^{k}\leq |\nu|<2^{k+1} } | \int_{\mathbb{R}^n} m_k(x,\nu)\widehat{f}_1(\phi_{\nu_1})\cdots \widehat{f}_\varkappa(\phi_{\nu_\varkappa}) \phi_{\nu_1}(x)\cdots \phi_{\nu_\varkappa}(x)g(x)dx|\\
&=  \sum_{ 2^{k}\leq |\nu|<2^{k+1} } | \int_{\mathbb{R}^n} \int_{\mathbb{R}^{n\varkappa}}\phi_{\nu}(z)\mathscr{F}_{H}^{-1}[m_k(x,\cdot)](z)dz\\
&\hspace{4cm}\times\widehat{f}_1(\phi_{\nu_1})\cdots \widehat{f}_\varkappa(\phi_{\nu_\varkappa}) \phi_{\nu_1}(x)\cdots \phi_{\nu_\varkappa}(x)g(x)dx|\\
&\leq  \sum_{ 2^{k}\leq |\nu|<2^{k+1} } \sup_{x\in\mathbb{R}^n} \int_{\mathbb{R}^{n\varkappa}}|\phi_{\nu}(z)||\mathscr{F}_{H}^{-1}[m_k(x,\cdot)]|(z)dz\\
&\hspace{2cm}\times \Vert f_{1}\Vert_{L^{1}}\Vert\phi_{\nu_1}\Vert_{L^{\infty}} \prod_{j=2}^{\varkappa} \Vert f_{j}\Vert_{L^{\infty}}\Vert\phi_{\nu_j}\Vert_{L^{1}}\cdot \Vert g\Vert_{L^\infty}\int_{\mathbb{R}^n}|\phi_{\nu_1}(x)\cdots \phi_{\nu_\varkappa}(x)|dx.\\
\end{align*}
Taking into account that $\varkappa\geq 2$ we write,
\begin{equation}\label{Diferencia1}
\int_{\mathbb{R}^n}|\phi_{\nu_1}(x)\cdots \phi_{\nu_\varkappa}(x)|dx\leq \Vert \phi_{\nu_1} \Vert_{L^2}\Vert \phi_{\nu_2} \Vert_{L^2}\prod_{j\neq 1,2}^{\varkappa}\Vert \phi_{\nu_j} \Vert_{L^{\infty}}\lesssim 1,
\end{equation} where we have used Remark \ref{infty} for the terms in the products.
Consequently,
\begin{align*}
&|(T_{m(k)} f,\overline{g})|  \leq \sum_{2^k\leq |\nu|<2^{k+1}}  \sup_{x\in\mathbb{R}^n} \int_{\mathbb{R}^{n\varkappa}}|\phi_{\nu}(z)||\mathscr{F}_{H}^{-1}[m_k(x,\cdot)]|(z)dz\\
&\hspace{2cm}\times \Vert f_{1}\Vert_{L^{1}}\Vert\phi_{\nu_1}\Vert_{L^{\infty}} \prod_{j=2}^{\varkappa} \Vert f_{j}\Vert_{L^{\infty}}\Vert\phi_{\nu_j}\Vert_{L^{1}}\cdot \Vert g\Vert_{L^\infty}.\\
\end{align*}
Hence we have the following estimate for the norm of $T_{m(k)}$,
\begin{align*}
&\Vert T_{m(k)} \Vert_{\mathscr{B}(L^1\times (L^{\infty})^{\varkappa-1},\,L^1)}\\
&\lesssim  \sum_{ 2^{k}\leq |\nu|<2^{k+1} }\sup_{x\in\mathbb{R}^n} \int_{\mathbb{R}^{n\varkappa}}|\phi_{\nu}(z)||\mathscr{F}_{H}^{-1}[m_k(x,\cdot)]|(z)dz\times \Vert\phi_{\nu_1}\Vert_{L^{\infty}} \prod_{j=2}^{\varkappa} \Vert\phi_{\nu_j}\Vert_{L^{1}}\\
&\leq  \sum_{ 2^{k}\leq |\nu|<2^{k+1} } \sup_{x\in\mathbb{R}^n}\Vert \langle z\rangle^{s}\mathscr{F}_{H}^{-1}[m_k(x,\cdot)]\Vert_{L^2(\mathbb{R}^{n\varkappa})}\\
&\hspace{3cm}\times \Vert \langle z\rangle^{-s} \Vert_{L^2(\mathbb{R}^{n\varkappa})}\Vert \phi_\nu \Vert_{L^{\infty}} \times \Vert\phi_{\nu_1}\Vert_{L^{\infty}} \prod_{j=2}^{\varkappa} \Vert\phi_{\nu_j}\Vert_{L^{1}}.
\end{align*} Since
\begin{equation}
\Vert \phi_\nu \Vert_{L^{\infty}} \times \Vert\phi_{\nu_1}\Vert_{L^{\infty}} \prod_{j=2}^{\varkappa} \Vert\phi_{\nu_j}\Vert_{L^{1}} \leq \Vert\phi_{\nu_1} \Vert_{L^\infty}^2\prod_{j=2}^{\varkappa} \Vert \phi_{\nu_j} \Vert_{L^1}\Vert\phi_{\nu_j}\Vert_{L^\infty}
\end{equation}
and by using the following estimate in Lemma \ref{Lemma1} for $p=\infty,$
\begin{equation}\label{L1estimate}
\Vert  \phi_{\nu_j} \Vert_{L^1(\mathbb{R}^n)}\Vert  \phi_{\nu_j} \Vert_{L^{\infty}(\mathbb{R}^n)}\lesssim |\nu_j|^{\gamma_\infty},\,\,\,2\leq j\leq \varkappa,
\end{equation}
($\gamma_\infty=\frac{n-1}{2}$ for $n\geq 2$ and for $n=1,$  $\gamma_\infty=1/6$), we obtain
\begin{equation}
\Vert \phi_\nu \Vert_{L^{\infty}} \times \Vert\phi_{\nu_1}\Vert_{L^{\infty}} \prod_{j=2}^{\varkappa} \Vert  \phi_{\nu_j} \Vert_{L^1} \lesssim \Vert\phi_{\nu_1}\Vert^2_{L^{\infty}} \prod_{j=2}^{\varkappa}  |\nu_j|^{ \gamma_\infty }\lesssim |\nu|^{{  \gamma_\infty(\varkappa-1)  }}.
\end{equation}
Let us note that in the last estimates we have used that Remark \ref{infty} implies $\Vert \phi_{\nu_1}\Vert_{L^\infty}=O(1)$.  Consequently we have for $s>\frac{n\varkappa}{2},$
\begin{align*}
&\Vert T_{m(k)} \Vert_{\mathscr{B}(L^1\times (L^{\infty})^{\varkappa-1},\,L^1)}\\
&\lesssim  \sum_{ 2^{k}\leq |\nu|<2^{k+1} } \sup_{x\in\mathbb{R}^n}\Vert \langle z\rangle^{s}\mathscr{F}_{H}^{-1}[m_k(x,\cdot)]\Vert_{L^2(\mathbb{R}^{n\varkappa})} \Vert \langle z\rangle^{-s} \Vert_{L^2(\mathbb{R}^{n\varkappa})}|\nu|^{ (\varkappa-1)\gamma_\infty }\\
&\lesssim  \sum_{ 2^{k}\leq |\nu|<2^{k+1} } 2^{-k(s-\frac{n\varkappa}{2})}2^{k(\varkappa-1)\gamma_\infty(\varkappa-1)}\Vert m \Vert_{l.u.\mathcal{H}^s}\asymp 2^{-k(s-\frac{n\varkappa}{2})+k(\varkappa-1)\gamma_\infty+kn\varkappa} \Vert m \Vert_{l.u.\mathcal{H}^s}\\
&=2^{-k(s-\frac{3n\varkappa}{2}-(\varkappa-1)\gamma_\infty)}\Vert m \Vert_{l.u.\mathcal{H}^s}.
\end{align*}
So, we obtain the following upper bound for the series
\begin{align*}
\sum_{k=1}^{\infty} \Vert T_{m(k)}  \Vert_{\mathscr{B}(L^1\times (L^{\infty})^{\varkappa-1},\,L^1)}\leq \Vert m \Vert_{l.u.\mathcal{H}^s}\times \sum_{k=1}^{\infty}2^{-k(s-\frac{3n\varkappa}{2}-(\varkappa-1)\gamma_\infty)}
\end{align*}
which converges provided that $s>\frac{3n\varkappa}{2}+{(\varkappa-1)}\gamma_\infty.$ Now, it is easy to see that $$\Vert T_{0} \Vert_{\mathscr{B}(L^1\times (L^{\infty})^{\varkappa-1},\,L^1)}\lesssim \Vert m(\cdot,0) \Vert_{L^{\infty}(\mathbb{R}^{n})}.$$
As a consequence we get $$  \Vert T_m \Vert_{\mathscr{B}(L^1\times (L^{\infty})^{\varkappa-1},\,L^1)}\leq C(\Vert m \Vert_{l.u.,\mathcal{H}^s}+\Vert m(\cdot,0) \Vert_{L^{\infty}(\mathbb{R}^{n})} ).$$
So, we finish  the proof of the first statement. For the proof of the second statement, we observe that
\begin{align*}
&|(T_{m(k)} f,\overline{g})|\\
&\leq\sum_{ 2^{k}\leq |\nu|<2^{k+1} } \sup_{x\in\mathbb{R}^n} \int_{\mathbb{R}^{n\varkappa}}|\phi_{\nu}(z)||\mathscr{F}_{H}^{-1}[m_k(x,\cdot)]|(z)dz\\
&\hspace{2cm}\times \Vert f_{1}\Vert_{L^{2}}\Vert\phi_{\nu_1}\Vert_{L^{2}} \prod_{j=2}^{\varkappa} \Vert f_{j}\Vert_{L^{\infty}}\Vert\phi_{\nu_j}\Vert_{L^{1}}\cdot \Vert g\Vert_{L^2} \cdot (\int_{\mathbb{R}^n}|\phi_{\nu_1}(x)\cdots \phi_{\nu_\varkappa}(x)|^2dx)^{\frac{1}{2}}.\\
&\lesssim \sum_{ 2^{k}\leq |\nu|<2^{k+1} } \sup_{x\in\mathbb{R}^n} \int_{\mathbb{R}^{n\varkappa}}|\phi_{\nu}(z)||\mathscr{F}_{H}^{-1}[m_k(x,\cdot)]|(z)dz\\
&\hspace{2cm}\times \Vert f_{1}\Vert_{L^{2}}\prod_{j=2}^{\varkappa} \Vert f_{j}\Vert_{L^{\infty}}\Vert\phi_{\nu_j}\Vert_{L^{1}}\cdot \Vert g\Vert_{L^2} |\nu|^{-\frac{1}{12}},
\end{align*}
where we have estimated $(\int_{\mathbb{R}^n}|\phi_{\nu_1}(x)\cdots \phi_{\nu_\varkappa}(x)|^2dx)^{\frac{1}{2}}\lesssim |\nu|^{-\frac{1}{12}}.$ This estimate can be obtained as follows. If $|\nu_i|:=\max_{1\leq j\leq \varkappa}|\nu_j|,$ similar to Remark \ref{infty}  we have
$$\Vert \phi_{\nu_i} \Vert_{L^{\infty}(\mathbb{R}^{n\varkappa})}\lesssim |\nu_i|^{-\frac{1}{12}}\lesssim |\nu|^{-\frac{1}{12}},$$ when $|\nu|$ is large enough. On the other hand, if $k\neq i,$ it follows that
\begin{equation}\label{Diferencia2}
(\int_{\mathbb{R}^n}|\phi_{\nu_1}(x)\cdots \phi_{\nu_\varkappa}(x)|dx)^{\frac{1}{2}}\lesssim \Vert \phi_{\nu_k} \Vert_{L^2}\Vert \phi_{\nu_i} \Vert_{L^\infty}\prod_{j\neq i,k}^{\varkappa}\Vert \phi_{\nu_j} \Vert_{L^{\infty}}\lesssim |\nu|^{-\frac{1}{12}},
\end{equation}
where we have used the crude estimate $\Vert \phi_{\nu_j} \Vert_{L^{\infty}}=O(1)$ for $j\neq k,i$ and that the $L^2-$norm of the function $\phi_{\nu_k}$ is normalised. By using this and Lemma \ref{Lemma10} for $p=1$  we obtain
\begin{align*}&\Vert T_{m(k)}  \Vert_{\mathscr{B}(L^2\times (L^{\infty})^{\varkappa-1},\,L^2)} \\
&\lesssim \sum_{ 2^{k}\leq |\nu|<2^{k+1} } \sup_{x\in\mathbb{R}^n} \int_{\mathbb{R}^{n\varkappa}}|\phi_{\nu}(z)||\mathscr{F}_{H}^{-1}[m_k(x,\cdot)]|(z)dz \prod_{j=2}^{\varkappa} \Vert\phi_{\nu_j}\Vert_{L^{1}} |\nu|^{-\frac{1}{12}}\\
& \lesssim  \sum_{ 2^{k}\leq |\nu|<2^{k+1} } \sup_{x\in\mathbb{R}^n}\Vert \langle z\rangle^{s}\mathscr{F}_{H}^{-1}[m_k(x,\cdot)]\Vert_{L^2(\mathbb{R}^{n\varkappa})} \Vert \langle z\rangle^{-s} \Vert_{L^2(\mathbb{R}^{n\varkappa})} \prod_{j=2}^{\varkappa} |{\nu_j}|^{\frac{n}{4}} |\nu|^{-\frac{1}{12}} \\
& \lesssim  \sum_{ 2^{k}\leq |\nu|<2^{k+1} } 2^{-k(s-\frac{n\varkappa}{2})}\Vert m\Vert_{l.u.\mathcal{H}^s}\vert \nu\vert^{\frac{n}{4}(\varkappa-1)}|\nu|^{-\frac{1}{12}}\asymp 2^{-k(s-\frac{3n\varkappa}{2}-\frac{n}{4}(\varkappa-1) +\frac{1}{12})}.
\end{align*}

Now, we only need to proceed as in the first part, in order to obtain the estimate
\begin{equation}
\Vert T_m \Vert_{\mathscr{B}(L^2\times (L^{\infty})^{\varkappa-1},\,L^2)}\leq C(\Vert m \Vert_{l.u.,\mathcal{H}^s}+\Vert m(\cdot,0) \Vert_{L^{\infty}(\mathbb{R}^{n})} ),
\end{equation}
for $s>\frac{3n\varkappa}{2}+\frac{n}{4}(\varkappa-1)-\frac{1}{12}.$ The last statement can be proved by observing that
\begin{align*}
|(T_{m(k)} f,\overline{g})|
&\leq \sum_{2^k\leq |\nu|<2^{k+1}} \sup_{x\in\mathbb{R}^n} \int_{\mathbb{R}^{n\varkappa}}|\phi_{\nu}(z)||\mathscr{F}_{H}^{-1}[m_k(x,\cdot)]|(z)dz\\
&\times \Vert f_{1}\Vert_{L^{p}}\Vert\phi_{\nu_1}\Vert_{L^{p'}} \prod_{j=2}^{\varkappa} \Vert f_{j}\Vert_{L^{\infty}}\Vert\phi_{\nu_j}\Vert_{L^{1}}\Vert g\Vert_{L^{p'}} (\int_{\mathbb{R}^n}|\phi_{\nu_1}(x)\cdots \phi_{\nu_\varkappa}(x)|^pdx)^{\frac{1}{p}}\\
&\lesssim \sum_{2^k\leq |\nu|<2^{k+1}} \sup_{x\in\mathbb{R}^n} \int_{\mathbb{R}^{n\varkappa}}|\phi_{\nu}(z)||\mathscr{F}_{H}^{-1}[m_k(x,\cdot)]|(z)dz\\
&\times \Vert f_{1}\Vert_{L^{p}}\Vert\phi_{\nu_1}\Vert_{L^{p'}}\Vert\phi_{\nu_1}\Vert_{L^{p}} \prod_{j=2}^{\varkappa} \Vert f_{j}\Vert_{L^{\infty}}\Vert\phi_{\nu_j}\Vert_{L^{1}} \Vert g\Vert_{L^{p'}},
\end{align*}
where in the last line we have used again that the $L^{\infty}$-norm of Hermite functions is $O(1).$ Now, if we denote by $\gamma_p$ the exponent that according to Lemma \ref{Lemma1} satisfies
\begin{equation}\label{Diferencia3}
\Vert \phi_{\nu_1} \Vert_{L^p} \Vert \phi_{\nu_1}\Vert_{L^{p'}}\lesssim \vert \nu_1 \vert^{\gamma_p},
\end{equation}
and we assume that $|\nu_1|:=\max_{1\leq j\leq \varkappa}|\nu_j|$ (which can be obtained by a simple permutation of the $\nu_j's$) we obtain $|\nu_1|\asymp |\nu|,$ and the estimate
$$  \Vert \phi_{\nu_1} \Vert_{L^p} \Vert \phi_\nu\Vert_{L^{p'}} \prod_{j=2}^{\varkappa}\Vert \phi_{\nu_j}\Vert_{L^{1}}\Vert \phi_{\nu_j}\Vert_{L^{\infty}}
\lesssim \vert \nu \vert^{\gamma_p+\frac{(\varkappa-1)(n-1)}{2}} .$$ In the last line according to Lemma \ref{Lemma1} we have used the estimate $\Vert \phi_{\nu_j}\Vert_{L^{1}}\Vert \phi_{\nu_j}\Vert_{L^{\infty}}\lesssim |\nu|^{\frac{n-1}{2}}.$
Now, if we repeat the argument of the first part we obtain
\begin{align*}\Vert T_{m(k)}  \Vert_{\mathscr{B}(L^p\times (L^{\infty})^{\varkappa-1},\,L^p)} &\lesssim  \sum_{ 2^{k}\leq |\nu|<2^{k+1} } 2^{-k(s-\frac{n\varkappa}{2})}\Vert m\Vert_{l.u.\mathcal{H}^s}2^{k\frac{(n-1)(\varkappa-1)}{2}+k\gamma_p}\\
&\asymp 2^{-k(s-\frac{3n\varkappa}{2}-\frac{(n-1)(\varkappa-1)}{2}-\gamma_p)}\Vert m\Vert_{l.u.\mathcal{H}^s} ,
\end{align*}
and consequently the estimate
$$  \Vert T_m \Vert_{\mathscr{B}(L^p\times (L^{\infty})^{\varkappa-1},\,L^p)}\leq C(\Vert m \Vert_{l.u.,\mathcal{H}^s}+\Vert m(\cdot,0) \Vert_{L^{\infty}(\mathbb{R}^{n})} ),$$
for $s>\frac{3n\varkappa}{2}+\frac{(n-1)(\varkappa-1)}{2}+\gamma_p.$
Thus we conclude the proof.
\end{proof}
\begin{remark}
The different regularity orders $s$ imposed to obtain the boundedness of multilinear pseudo-multipliers in  Proposition  \ref{firstlemmamultilinear}   for $p=1,2$ or other values of $p,$ lie in the slight variations that we use for the proof of every specific case. To be more precise, these differences appear as consequence of the conclusions  \eqref{Diferencia1} for $p=1,$ \eqref{Diferencia2} for $p=2,$ (where we have used strongly that the $L^2$-norm of every Hermite functions is normalised) and the estimate \eqref{Diferencia3} when $2<p<\infty.$ Let us mention that our main strategy in the proof of Proposition \ref{multilineartool} will be to use the real interpolation for $p$ between $p_0=1$  and $p_1=2$ or $p$ between $p_1=2$ and arbitrary $p$ with $2<p<\infty,$ together with the different regularity orders imposed in Proposition \ref{firstlemmamultilinear}.
\end{remark}

With a similar proof, as in the previous result, we present the following proposition.

\begin{proposition}\label{secondlemmamultilinear}
Let us consider a pseudo-multiplier $T_m$ defined on $\mathscr{D}(\mathbb{R}^n)^{\varkappa}$ with symbol $m=\{m(x,\nu)\}_{x\in\mathbb{R}^n,\nu\in\mathbb{N}_{0}^{n\varkappa} }$ where $m:\mathbb{R}^n\times \mathbb{R}^{n\varkappa}\rightarrow\mathbb{C}$ satisfies the condition
\begin{equation}\label{multilinearhormander'''}
\Vert m \Vert_{l.u.,H^s}:=\sup_{k>0,\,x\in\mathbb{R}^n} \,2^{k(s-\frac{n\varkappa}{2})}\Vert  \langle z \rangle^{s} \mathscr{F}[m(x,\cdot)\psi(2^{-k}|\cdot|)](z)\Vert_{L^2({\mathbb{R}}^{n\varkappa}_z)}<\infty.
\end{equation}
Then
\begin{itemize}
\item[1.] If $s>\frac{3n\varkappa}{2}+{(\varkappa-1)}\gamma_\infty,$ the operator $ T_m$ extends to a bounded multilinear operator from $L^{1}\times L^{\infty}\times\cdots \times L^{\infty}\times L^{\infty}$ into $L^{1}(\mathbb{R}^n),$ and
\begin{equation}
\Vert T_m \Vert_{\mathscr{B}(L^1\times (L^{\infty})^{\varkappa-1},\,L^1)}\leq C(\Vert m \Vert_{l.u.,\mathcal{H}^s}+\Vert m(\cdot,0) \Vert_{L^{\infty}(\mathbb{R}^n)}).
\end{equation}
\item[2.] If $s>\frac{3n\varkappa}{2}+\frac{(\varkappa-1)n}{4},$ the operator $ T_m$ extends to a bounded multilinear operator from $L^{2}\times L^{\infty}\times\cdots \times L^{\infty}\times L^{\infty}$ into $L^{2}(\mathbb{R}^n),$ and
\begin{equation}
\Vert T_m \Vert_{\mathscr{B}(L^2\times (L^{\infty})^{\varkappa-1},\,L^2)}\leq C(\Vert m \Vert_{l.u.,\mathcal{H}^s}+\Vert m(\cdot,0) \Vert_{L^{\infty}(\mathbb{R}^n)}).
\end{equation}
\item[3.] If  $s>\frac{3n\varkappa}{2}+\frac{(n-1)(\varkappa-1)}{2}+\gamma_p,$ $\gamma_p$  defined as in  \eqref{gammap}, the operator $ T_m$ extends to a bounded multilinear operator from $L^{p}\times L^{\infty}\times\cdots \times L^{\infty}\times L^{\infty}$ into $L^{p}(\mathbb{R}^n),$ and
\begin{equation}
\Vert T_m \Vert_{\mathscr{B}(L^p\times (L^{\infty})^{\varkappa-1},\,L^p)}\leq C(\Vert m \Vert_{l.u.,\mathcal{H}^s}+\Vert m(\cdot,0) \Vert_{L^{\infty}(\mathbb{R}^n)}),
\end{equation}  for all $2< p\leq \infty.$
\end{itemize}
\end{proposition}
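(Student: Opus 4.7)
The plan is to mimic the proof of Proposition \ref{firstlemmamultilinear} with the sole substantive change being the integral representation of the dyadic pieces of the symbol. Specifically, I would first perform the dyadic decomposition
\begin{equation*}
m(x,\nu) = m(x,0)\delta_{\nu,0} + \sum_{k=0}^{\infty} m_k(x,\nu), \qquad m_k(x,\nu) := m(x,\nu)\cdot\mathbf{1}_{\{2^{k}\leq |\nu|<2^{k+1}\}},
\end{equation*}
write $T_m = T_0 + \sum_k T_{m(k)}$, and estimate each operator norm by duality $\Vert T_{m(k)}(f_1,\dots,f_\varkappa)\Vert_{L^p}=\sup_{\Vert g\Vert_{L^{p'}}=1}|(T_{m(k)}(f_1,\dots,f_\varkappa),\overline{g})|$. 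The pivotal step is to invoke Fourier inversion on $\mathbb{R}^{n\varkappa}$ (rather than the Fourier--Hermite inversion) to write
\begin{equation*}
m_k(x,\nu) = \int_{\mathbb{R}^{n\varkappa}} \mathscr{F}[m(x,\cdot)\psi(2^{-k}|\cdot|)](z)\,e^{2\pi i \nu\cdot z}\,dz,
\end{equation*}
so that $|e^{2\pi i\nu\cdot z}|=1$ removes the factor $|\phi_\nu(z)|$ that appeared in the analogous step for Proposition \ref{firstlemmamultilinear}.

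After substituting and applying Cauchy--Schwarz against the weight $\langle z\rangle^s$, one obtains
\begin{equation*}
\sup_{x\in\mathbb{R}^n}\int_{\mathbb{R}^{n\varkappa}}|\mathscr{F}[m(x,\cdot)\psi(2^{-k}|\cdot|)](z)|\,dz \leq \Vert \langle \cdot\rangle^{-s}\Vert_{L^2(\mathbb{R}^{n\varkappa})}\cdot \Vert m\Vert_{l.u.,H^s}\cdot 2^{-k(s-\frac{n\varkappa}{2})},
\end{equation*}
which is finite as soon as $s>\frac{n\varkappa}{2}$ (automatic under our hypotheses by Remark \ref{lowerboundforgammap}). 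The remaining factors are products of $L^p$-norms of Hermite functions of the form $\Vert \phi_{\nu_1}\Vert_{L^{p'}}\Vert \phi_{\nu_1}\Vert_{L^p}\prod_{j=2}^{\varkappa}\Vert\phi_{\nu_j}\Vert_{L^1}\Vert \phi_{\nu_j}\Vert_{L^{\infty}}$ in item 3, and the analogous simpler products in items 1 and 2, which I control exactly as in the proof of Proposition \ref{firstlemmamultilinear} using Lemma \ref{Lemma1} and Remark \ref{infty}, picking up the factors $|\nu|^{(\varkappa-1)\gamma_\infty}$, $|\nu|^{\frac{n(\varkappa-1)}{4}}$, and $|\nu|^{\frac{(n-1)(\varkappa-1)}{2}+\gamma_p}$ respectively.

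Summing the resulting $\sum_{2^k\leq|\nu|<2^{k+1}}1\asymp 2^{kn\varkappa}$ and multiplying by $2^{-k(s-n\varkappa/2)}$ yields geometric series of ratios $2^{-k(s-\tau)}$ where $\tau$ is the claimed threshold in each of the three items; these converge precisely for $s$ as stated. Finally, $\Vert T_0\Vert_{\mathscr{B}(L^p\times(L^\infty)^{\varkappa-1},L^p)}\lesssim \Vert m(\cdot,0)\Vert_{L^\infty(\mathbb{R}^n)}$ is immediate, and collecting terms delivers the announced operator norm bounds.

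The only conceptual point to watch, and the reason the threshold in item 2 is shifted by $+\frac{1}{12}$ with respect to its counterpart in Proposition \ref{firstlemmamultilinear}, is that here the Fourier kernel $e^{2\pi i\nu\cdot z}$ has modulus one, so one cannot exploit the decay $\Vert \phi_{\nu_1}\Vert_{L^{\infty}}\lesssim |\nu|^{-1/12}$ that the Fourier--Hermite kernel $\phi_\nu(z)$ provided; accordingly, the $L^2_x$-estimate on $\phi_{\nu_1}\cdots\phi_{\nu_\varkappa}$ is controlled by $\Vert \phi_{\nu_1}\Vert_{L^2}\prod_{j\geq 2}\Vert\phi_{\nu_j}\Vert_{L^\infty}\lesssim 1$ rather than by $|\nu|^{-1/12}$. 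This absence of the $-\frac{1}{12}$ gain is the main (and essentially the only) substantive difference from Proposition \ref{firstlemmamultilinear}, so I expect the argument itself to be routine once this observation is recorded.
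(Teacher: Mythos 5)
Your proposal is correct and follows exactly the route the paper intends: the paper itself only says the proof is ``similar'' to that of Proposition \ref{firstlemmamultilinear}, with the Euclidean Fourier inversion $m_k(x,\nu)=\int_{\mathbb{R}^{n\varkappa}}\mathscr{F}[m(x,\cdot)\psi(2^{-k}|\cdot|)](z)e^{2\pi i\nu\cdot z}dz$ replacing the Fourier--Hermite inversion, which is precisely the substitution you make. One small caveat on your closing commentary: your diagnosis of the $+\frac{1}{12}$ shift in item 2 is slightly off, since in Proposition \ref{firstlemmamultilinear} that gain came from the $x$-integral estimate $(\int_{\mathbb{R}^n}|\phi_{\nu_1}(x)\cdots\phi_{\nu_\varkappa}(x)|^2dx)^{\frac{1}{2}}\lesssim|\nu|^{-\frac{1}{12}}$, which is independent of the $z$-kernel and hence still available here; your cruder bound $\lesssim 1$ simply reproduces the (non-sharp) threshold as stated, so the proof of the proposition as written is unaffected.
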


Let us note that the second assertion of Proposition \ref{firstlemmamultilinear} requires symbols with regularity order $s>\frac{3n\varkappa}{2}+\frac{(\varkappa-1)n}{4},$ instead of its analogue condition in \eqref{secondlemmamultilinear} where we only need $s>\frac{3n\varkappa}{2}+\frac{(\varkappa-1)n}{4}-\frac{1}{12}.$ This difference is consequence of the different Fourier transforms that we use to  classify the regularity of symbols.

In order to present our multilinear result, we will need the following interpolation theorem which is valid for general measure spaces, but for simplicity we record it on $\mathbb{R}^n.$

\begin{proposition}[Riesz-Thorin Interpolation] Let us assume that a linear operator $T$ can be extended to a bounded operator $T:L^{p_i}(\mathbb{R}^n)\rightarrow L^{q_i}(\mathbb{R}^n)$ for $i\in\{0,1\}.$ If $0<\theta<1$ and $p,q$ are defined by
\begin{equation}
1/p=(1-\theta)/p_0+\theta/p_1,\,\,\,1/q=(1-\theta)/q_0+\theta/q_1,
\end{equation}
the operator $T$ can be extended to a bounded operator $T:L^{p}(\mathbb{R}^n)\rightarrow L^{q}(\mathbb{R}^n)$ with operator norm estimated by
\begin{equation}\label{RTI}
\Vert T\Vert_{\mathscr{B}(L^{p},L^{q})}\leq  \Vert T\Vert_{\mathscr{B}(L^{p_0},L^{q_0})}^{1-\theta} \Vert T\Vert_{\mathscr{B}(L^{p_1},L^{q_1})}^{\theta}.
\end{equation}
\end{proposition}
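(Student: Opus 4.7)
The plan is to prove this by the classical Riesz--Thorin method based on Hadamard's three-lines lemma, reducing everything by duality to an analytic family of simple functions. First I would argue that it suffices to prove the inequality \eqref{RTI} when $f$ and $g$ are simple functions, since simple functions are dense in $L^p(\mathbb{R}^n)$ for $1 \leq p < \infty$ and the endpoint case $p = \infty$ or $q = \infty$ is handled separately via a straightforward weak-$\ast$ or duality argument. Letting $M_i := \Vert T\Vert_{\mathscr{B}(L^{p_i},L^{q_i})}$, the required bound reads
\begin{equation*}
|\langle Tf, g\rangle| \leq M_0^{1-\theta} M_1^{\theta}
\end{equation*}
whenever $f = \sum_k a_k \chi_{E_k}$ is a simple function with $\Vert f \Vert_{L^p} = 1$ and $g = \sum_j b_j \chi_{F_j}$ is a simple function with $\Vert g \Vert_{L^{q'}} = 1$, where the $E_k$ (and $F_j$) are pairwise disjoint sets of finite measure.

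Next I would introduce the analytic family. Setting
\begin{equation*}
\alpha(z) := \frac{1-z}{p_0} + \frac{z}{p_1}, \qquad \beta(z) := \frac{1-z}{q_0'} + \frac{z}{q_1'},
\end{equation*}
so that $\alpha(\theta) = 1/p$ and $\beta(\theta) = 1/q'$, I define the simple-function families
\begin{equation*}
f_z(x) := \sum_k |a_k|^{\alpha(z)/\alpha(\theta)} \operatorname{sgn}(a_k)\, \chi_{E_k}(x), \qquad g_z(x) := \sum_j |b_j|^{\beta(z)/\beta(\theta)} \operatorname{sgn}(b_j)\, \chi_{F_j}(x),
\end{equation*}
and consider $F(z) := \langle Tf_z,\, g_z\rangle$. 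A direct computation (just expanding the double sum and using the linearity of $T$) shows that $F$ is a finite linear combination of terms of the form $c_{k,j} \mu(E_k)^{\alpha(z)/\alpha(\theta) \cdot p_0} \mu(F_j)^{\beta(z)/\beta(\theta) \cdot q_0'}$ times exponentials, so $F$ is entire, bounded, and continuous up to the closure of the strip $S = \{z : 0 \leq \operatorname{Re}(z) \leq 1\}$.

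On the boundary lines, a short calculation shows $\Vert f_{it} \Vert_{L^{p_0}} = 1$ and $\Vert g_{it} \Vert_{L^{q_0'}} = 1$, so the assumed bound gives $|F(it)| \leq M_0$; similarly $|F(1 + it)| \leq M_1$ for $t \in \mathbb{R}$. Then Hadamard's three-lines lemma yields
\begin{equation*}
|F(\theta)| \leq M_0^{1-\theta} M_1^{\theta}.
\end{equation*}
Since $f_\theta = f$ and $g_\theta = g$, this is exactly the desired estimate. The main technical point that requires care is verifying that $f_z$ and $g_z$ lie in the correct $L^{p_0}$ or $L^{p_1}$ space (so that $T$ acts on them) along the boundary lines, and that $F$ has the growth required to apply the three-lines lemma; both follow from the fact that simple functions have finite measure supports and bounded nonzero values, so the modulus $|a_k|^{\alpha(z)/\alpha(\theta)}$ remains uniformly bounded as $z$ ranges over $\overline{S}$. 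After this, the endpoint cases involving $\infty$ and the final passage from simple functions to general $L^p$ functions by density complete the argument.
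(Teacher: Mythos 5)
Your proof is essentially correct, but note that the paper itself offers no proof of this proposition: it is stated as the classical Riesz--Thorin theorem and explicitly invoked as a well-known result (the authors only emphasize that they will use the explicit norm bound \eqref{RTI}), so there is no argument in the paper to compare against. What you have written is the standard complex-interpolation proof via Hadamard's three-lines lemma, and the skeleton is sound: reduction to simple functions by density and duality, the analytic families $f_z$, $g_z$ with $\Vert f_{it}\Vert_{L^{p_0}}=\Vert f_{1+it}\Vert_{L^{p_1}}=1$ (since $|f_{it}|=|a_k|^{p/p_0}$ on $E_k$, etc.), the boundary bounds $|F(it)|\leq M_0$, $|F(1+it)|\leq M_1$, and the three-lines lemma at $z=\theta$. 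Two small points deserve correction. First, your description of $F(z)$ is garbled: $F(z)=\sum_{k,j}|a_k|^{\alpha(z)/\alpha(\theta)}|b_j|^{\beta(z)/\beta(\theta)}\operatorname{sgn}(a_k)\operatorname{sgn}(b_j)\int T(\chi_{E_k})\chi_{F_j}$, so the quantities $\int T(\chi_{E_k})\chi_{F_j}$ are fixed constants and the entire $z$-dependence sits in the exponentials $e^{(\alpha(z)/\alpha(\theta))\log|a_k|+(\beta(z)/\beta(\theta))\log|b_j|}$; the measures $\mu(E_k),\mu(F_j)$ do not acquire $z$-dependent powers. Second, the degenerate cases you must exclude from the main construction are $p=\infty$ (where $\alpha(\theta)=0$ and one takes $f_z\equiv f$, forcing $p_0=p_1=\infty$) and $q=1$ (where $\beta(\theta)=0$ and one takes $g_z\equiv g$), not ``$q=\infty$'': when $q=\infty$ one has $q'=1$ and the duality and the family $g_z$ work without modification. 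With those repairs the argument is the complete textbook proof.
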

Although the Riesz-Thorin Interpolation Theorem is a well known result, we will use strongly the control on the norms given in \eqref{RTI}. In the following result we will consider multilinear symbols satisfying H\"ormander conditions of order
$$s>s_{n,\varkappa,p}:=\max\{\frac{3n\varkappa}{2}+\frac{(\varkappa-1)n}{4},\frac{3n\varkappa}{2}+\frac{(n-1)(\varkappa-1)}{2}+\gamma_p\},$$ with $\gamma_p$ defined as in \eqref{gammap}. Let us note that $\frac{3n\varkappa}{2}+\frac{(\varkappa-1)n}{4} $ and $\frac{3n\varkappa}{2}+\frac{(n-1)(\varkappa-1)}{2}+\gamma_p$ can be not compared immediately because the sign of $\gamma_p$ depends on the  values of $p.$

\begin{proposition}\label{multilineartool}
Let $2\leq \varkappa<\infty,$   $\varkappa \in\mathbb{N}_0.$ Let us consider a multilinear pseudo-multiplier $T_m$ defined on $\mathscr{D}(\mathbb{R}^n)^{\varkappa}$ with symbol satisfying \eqref{multilinearhormander} or \eqref{multilinearhormander'''} for  $$s>s_{n,\varkappa,p}:=\max\{\frac{3n\varkappa}{2}+{(\varkappa-1)}\gamma_\infty,\frac{3n\varkappa}{2}+\frac{(\varkappa-1)n}{4}\},\,1\leq p\leq 2,$$ with $\gamma_\infty,$ defined as in \eqref{gammap}. Then the operator
\begin{equation}\label{multi1}
T_m:L^{p_1}\times L^{p_2}\times\cdots \times L^{p_{\varkappa-1}}\times L^{p_\varkappa}\rightarrow L^{p}(\mathbb{R}^n)
\end{equation}
extends to a bounded multilinear operator provided that $1\leq p_{j}\leq \infty,$ $1\leq p\leq 2,$ and $\frac{1}{p}=\frac{1}{p_1}+\cdots +\frac{1}{p_\varkappa}.$ If $m$ satisfies the condition \eqref{multilinearhormander} or \eqref{multilinearhormander'''} for $$s>s_{n,\varkappa,p}:=\max\{\frac{3n\varkappa}{2}+\frac{(\varkappa-1)n}{4},\frac{3n\varkappa}{2}+\frac{(n-1)(\varkappa-1)}{2}+\gamma_p\},$$ with $\gamma_p$ defined as in \eqref{gammap}, then \eqref{multi1} holds true for all $2\leq p\leq \infty$ and $\frac{1}{p}=\frac{1}{p_1}+\cdots +\frac{1}{p_\varkappa}.$
\end{proposition}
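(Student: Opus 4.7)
The plan is to derive both parts by interpolating the endpoint estimates already proved in Propositions \ref{firstlemmamultilinear} and \ref{secondlemmamultilinear}. The first ingredient is a permutation symmetry: both H\"ormander norms \eqref{multilinearhormander} and \eqref{multilinearhormander'''} are invariant under permutations of the $\varkappa$ frequency blocks of $\nu\in\mathbb{N}_0^{n\varkappa}$, since $\langle z\rangle$ and $|\cdot|$ are. Setting $\tilde m(x,\nu_1,\dots,\nu_\varkappa):=m(x,\nu_{\sigma(1)},\dots,\nu_{\sigma(\varkappa)})$ for a permutation $\sigma$ of $\{1,\dots,\varkappa\}$, one has $T_{\tilde m}(f_1,\dots,f_\varkappa)=T_m(f_{\sigma^{-1}(1)},\dots,f_{\sigma^{-1}(\varkappa)})$ together with the identity of H\"ormander norms. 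Thus the endpoint conclusions of those propositions, which place the distinguished factor in slot $1$, yield analogous estimates with the distinguished factor in any slot $j\in\{1,\dots,\varkappa\}$.

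Fix the target exponent $p$. In the range $1\le p\le 2$, for each slot $j$ I would use standard linear complex interpolation on the $j$-th variable alone, between the endpoints $L^1\to L^1$ and $L^2\to L^2$ (parts $1$ and $2$ of the propositions, the remaining arguments being frozen in $L^\infty$), to obtain the vertex estimate
\[
T_m:\underbrace{L^\infty\times\cdots\times L^\infty\times L^p\times L^\infty\times\cdots\times L^\infty}_{L^p\text{ in slot }j}\longrightarrow L^p,\qquad j=1,\dots,\varkappa,
\]
with operator norm controlled by $\Vert m\Vert_{l.u.,\mathcal H^s}+\Vert m(\cdot,0)\Vert_{L^\infty}$ under the hypothesis $s>\max\{\tfrac{3n\varkappa}{2}+(\varkappa-1)\gamma_\infty,\tfrac{3n\varkappa}{2}+\tfrac{(\varkappa-1)n}{4}\}$. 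In the range $2\le p\le\infty$, the analogous interpolation runs between the $L^2\to L^2$ endpoint and the $L^p\to L^p$ endpoint (assertion $3$ of the propositions for the fixed $p$), and produces the same family of vertex estimates under the regularity condition $s>s_{n,\varkappa,p}$ of the statement; the case $p=\infty$ is assertion $3$ itself.

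With these $\varkappa$ symmetric vertex bounds in hand, I would apply multilinear Riesz-Thorin interpolation with weights $(\theta_1,\dots,\theta_\varkappa)$ satisfying $\theta_j\ge 0$ and $\sum_j\theta_j=1$, assigning $\theta_j=1$ at the $j$-th vertex. The interpolated exponents satisfy $1/p_j=\theta_j/p$, so $\sum_j 1/p_j=1/p$ automatically, the target $L^p$ is unchanged, and the operator norm is bounded by $\prod_j M_j^{\theta_j}$, where $M_j$ is the norm at the $j$-th vertex. The main obstacle, and the only delicate point in the plan, is to confirm that this parameterisation exhausts the entire admissible region $\{(p_1,\dots,p_\varkappa)\,:\,p_j\in[1,\infty],\,\sum_j 1/p_j=1/p\}$. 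In the variables $u_j:=1/p_j$ this region is $\{u\in[0,1]^\varkappa\,:\,\sum_j u_j=1/p\}$; since $p\ge 1$ forces $1/p\le 1$, the inequality $u_j\le 1$ is automatic from $u_j\le \sum_i u_i=1/p\le 1$, and the region reduces to the standard simplex with vertices $(1/p)e_1,\dots,(1/p)e_\varkappa$. The $\varkappa$-endpoint multilinear Riesz-Thorin theorem therefore covers it entirely, which completes the proof.
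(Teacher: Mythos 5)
Your proposal is correct and follows essentially the same route as the paper: both reduce the statement to the vertex estimates of Propositions \ref{firstlemmamultilinear} and \ref{secondlemmamultilinear} (one distinguished $L^p$ slot, the remaining slots frozen in $L^\infty$) and then fill in the simplex $\sum_j 1/p_j=1/p$ by Riesz--Thorin interpolation, with the same regularity bookkeeping. The only organizational difference is that the paper reaches a general tuple $(p_1,\dots,p_\varkappa)$ by induction on $\varkappa$ with iterated two-point interpolation (freezing one argument at a time), whereas you invoke the $\varkappa$-vertex multilinear Riesz--Thorin theorem in one step and make explicit the permutation symmetry of the H\"ormander norms, a point the paper passes over with ``similarly''.
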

\begin{proof}
In order to prove the statement we will use real interpolation together with induction on $\varkappa.$ Let us define
 the set
 \begin{equation}
 M:=\{\varkappa\in\mathbb{N}: \varkappa\geq 2,\textnormal{   and from  \eqref{multilinearhormander} or \eqref{multilinearhormander'''} we deduce \eqref{multi1} for  }s>s_{n,\varkappa,p}\} .
 \end{equation}
 First, we will prove that $\varkappa=2\in M.$ Then, let us assume that a bilinear operator $T_m$ satisfies \eqref{multilinearhormander} or \eqref{multilinearhormander'''}. By Proposition \ref{firstlemmamultilinear} we have that $T_{m}\in \mathscr{B}(L^r\times L^\infty,L^r)$ for $r=1,2,$ provided that
  $$s>s_{n,\varkappa,p}:=\max\{\frac{3n\varkappa}{2}+{(\varkappa-1)}\gamma_\infty,\frac{3n\varkappa}{2}+\frac{(\varkappa-1)n}{4}\}.$$
 Now, if we fix $g_0\in L^{\infty}$ and we consider the operator $$T_{m,0}:=T_m(\cdot,g_0),$$
then  $T_{m,0}\in \mathscr{B}(L^r)$ for $r=1,2,$ and by real interpolation for all $1\leq r\leq 2.$ Moreover, if $r$ is given by
$$  \frac{1}{r}=\frac{1-\theta}{1}+\frac{\theta}{2},$$ for some $0<\theta<1,$ and we taking into account the norm estimates
\begin{equation}
\Vert T_{m,0}\Vert_{\mathscr{B}(L^1)}\leq \Vert T_m\Vert_{\mathscr{B}(L^1\times L^{\infty},L^1)}\Vert g_0 \Vert_{L^\infty},\,\,\Vert T_{m,0}\Vert_{\mathscr{B}(L^2)}\leq \Vert T_m\Vert_{\mathscr{B}(L^2\times L^{\infty},L^2)}\Vert g_0 \Vert_{L^\infty},
\end{equation}
by application of \eqref{RTI} we have
\begin{equation}
\Vert T_{m,0}\Vert_{\mathscr{B}(L^r)}\leq \Vert T_m\Vert^{1-\theta}_{\mathscr{B}(L^1\times L^{\infty},L^1)}\Vert T_m\Vert^{\theta}_{\mathscr{B}(L^2\times L^{\infty},L^2)}\Vert g_0 \Vert_{L^\infty}.
\end{equation}

Consequently we deduce the boundedness of $T_m$ from $L^p\times L^\infty $ into $L^p,$  $1\leq p\leq 2.$ Similarly we obtain the boundedness of $T_m$ from $L^\infty\times L^p $ into $L^p.$ Now if we repeat the argument for every entry of $T_m$, i.e., first fixing the first argument and later fix the second argument, by interpolation we have the boundedness of $T_m$ from $L^{p_1}\times L^{p_2}$ into $L^{p},$  $1\leq p\leq 2,$ with $p_1$ and $p_2$ satisfying the relation
\begin{equation}
\frac{1}{p_1}=\frac{\theta}{p}\,\,\,\,\,\textnormal{   and   }\,\,\,\,\, \frac{1}{p_2}=\frac{1-\theta}{p}
\end{equation}
for some $\theta\in [0,1].$ Clearly, $\frac{1}{p_1}+\frac{1}{p_2}=1.$ Now, we will assume that  every integer number  $s$ less that $\varkappa$ belongs to $M.$ So, let us assume now that we have a multilinear operator $T_{m}$ on $\mathscr{D}(\mathbb{R}^m)^{\varkappa}.$ By Proposition \ref{firstlemmamultilinear} the operator $ T_m$ extends to a bounded multilinear operator from $L^{r}\times L^{\infty}\times\cdots \times L^{\infty}\times L^{\infty}$ into $L^{r}(\mathbb{R}^n),$ for $r=1,2.$ If we consider $g_0\in L^\infty$ and similarly, as in the bilinear case, we define $$T_{m,0}:=T_m(\cdot,\cdot,\cdots,\cdot, g_0),$$
fixing the last argument of $T_{m}$ we obtain a multilinear operator on $\mathscr{D}(\mathbb{R}^m)^{\varkappa-1},$ and by considering that $\varkappa-1\in M,$ we have that
\begin{equation}
T_{m,0}:L^{p_1}\times L^{p_2}\times\cdots \times L^{p_{\varkappa-1}}\rightarrow L^{p}(\mathbb{R}^n)
\end{equation}
extends to a bounded multilinear operator provided that $1\leq p_{j}\leq \infty,$ $1\leq p\leq 2$ and $\frac{1}{p}=\frac{1}{p_1}+\cdots +\frac{1}{p_{\varkappa-1}}.$ As a consequence we obtain that
\begin{equation}
T_{m}:L^{p_1}\times L^{p_2}\times\cdots \times L^{p_{\varkappa-1}}\times L^{\infty}\rightarrow L^{p}(\mathbb{R}^n),
\end{equation}
  is bounded. Because
 \begin{equation}
T_{m}:L^{\infty}\times L^{\infty}\times\cdots \times L^{{\infty}}\times L^{p}\rightarrow L^{p}(\mathbb{R}^n)
\end{equation}
 is bounded for all $1\leq p\leq 2,$
we can fix every argument of $T_m$ and apply the real interpolation in order to provide the boundedness of $T_m$ from $
L^{\dot{p}_1}\times L^{\dot{p}_2}\times\cdots \times L^{\dot{p}_{\varkappa}} $ $L^{p}(\mathbb{R}^n),$
where
$$ \frac{\theta}{{p}_i}=\frac{1}{ \dot{p}_i }\textnormal{     for    }\,1\leq i\leq \varkappa-1,\textnormal{    and    }\frac{1-\theta}{p}=\frac{1}{\dot{p}_\varkappa}, $$
for some $0\leq\theta\leq 1.$ Now, we finish the proof of induction by observing that
\begin{equation}
\sum_{i=1}^{\varkappa}\frac{1}{\dot{p}_i}=\frac{\theta}{p}+\frac{1-\theta}{p}=\frac{1}{p},\textnormal{    and     }\varkappa\in M,
\end{equation}
so, we have proved that $M=\{\varkappa\in\mathbb{N}:\varkappa\geq 2\}.$ Now, in a similar way we can use  statements  2 and 3 of Proposition \ref{firstlemmamultilinear} and the real interpolation (by repetition of the arguments above) in order to provide the boundedness of $T_m$ for the case when $2\leq p<\infty.$
\end{proof}
\begin{remark}
Taking into account that $\gamma_\infty=\frac{n-1}{2}$ for $n\geq 2$ and for $n=1,$  $\gamma_\infty=1/6,$ we can compute explicitly the regularity order  $s_{n,\varkappa,p},$ $1\leq p\leq 2,$ defined in the previous proposition. Indeed, if $n\geq 2,$
$$ s_{n,\varkappa,p}:=\max\{\frac{3n\varkappa}{2}+{(\varkappa-1)}\gamma_\infty,\frac{3n\varkappa}{2}+\frac{(\varkappa-1)n}{4}\}= \frac{3n\varkappa}{2}+\frac{(\varkappa-1)(n-1)}{2},$$
and 
$$ s_{n,\varkappa,p}:=\max\{\frac{3n\varkappa}{2}+{(\varkappa-1)}\gamma_\infty,\frac{3n\varkappa}{2}+\frac{(\varkappa-1)n}{4}\}= \frac{3n\varkappa}{2}+\frac{(\varkappa-1)n}{4}=\frac{3\varkappa}{2}+\frac{\varkappa-1}{4},$$
for $n=1.$ Let us note that these regularity orders cannot be applied to $\varkappa=1,$ in order to recover those regularity orders given in the linear case, because our Proposition \ref{multilineartool} is a consequence of Proposition \ref{firstlemmamultilinear} whose proof uses strongly that $\varkappa\geq 2.$
\end{remark}

\bibliographystyle{amsplain}

\end{document}